\theoremstyle{plain}
\newtheorem{thm}{\protect\theoremname}
\theoremstyle{plain}
\newtheorem{prop}[thm]{\protect\propositionname}
\theoremstyle{plain}
\newtheorem{cor}[thm]{\protect\corollaryname}
\theoremstyle{remark}
\newtheorem{rem}[thm]{\protect\remarkname}
\theoremstyle{plain}
\newtheorem{lem}[thm]{\protect\lemmaname}
\theoremstyle{definition}
\theoremstyle{plain}
\newtheorem{assumption}[thm]{Assumption}
\theoremstyle{plain}
\newtheorem{definition}[thm]{Definition}
\renewcommand{\hat}{\widehat}
\renewcommand{\tilde}{\widetilde}
\renewcommand{\bar}{\overline}
\providecommand{\corollaryname}{Corollary}
\providecommand{\examplename}{Example}
\providecommand{\lemmaname}{Lemma}
\providecommand{\propositionname}{Proposition}
\providecommand{\remarkname}{Remark}
\providecommand{\theoremname}{Theorem}
\begin{document}
\global\long\def\phi{\varphi}%
\global\long\def\epsilon{\varepsilon}%
\global\long\def\theta{\vartheta}%
\global\long\def\P{\mathbb{P}}%
\global\long\def\E{\mathbb{E}}%
\global\long\def\Var{\operatorname{Var}}%
\global\long\def\Cov{\operatorname{Cov}}%
\global\long\def\N{\mathbb{N}}%
\global\long\def\Z{\mathbb{Z}}%
\global\long\def\R{\mathbb{R}}%
\global\long\def\F{\mathcal{F}}%
\global\long\def\le{\leqslant}%
\global\long\def\ge{\geqslant}%
\global\long\def\eins{\mathbbm1}%
\global\long\def\d{\mathrm{d}}%
 
\global\long\def\subset{\subseteq}%
\global\long\def\supset{\supseteq}%
\global\long\def\argmin{\arg\,\min}%
\global\long\def\bull{{\scriptstyle \bullet}}%
\global\long\def\supp{\operatorname{Supp}}%
\global\long\def\sgn{\operatorname{sign}}%



\begin{frontmatter}
\title{Dispersal density estimation across scales}
\runtitle{Dispersal density estimation across scales}

\begin{aug}
\author{Marc Hoffmann} \and
\author{Mathias Trabs}


\address{Marc Hoffmann,\\ Universit\'e Paris-Dauphine PSL, \\Place du Mar\'echal De Lattre de Tassigny, \\75016 Paris, France}
\address{Mathias Trabs,\\ Karlsruhe Institute of Technology,\\ Institut f\"ur Stochastik\\ Englerstr. 2,\\ 76131 Karlsruhe, Germany}

\runauthor{Marc Hoffmann and Mathias Trabs}

\affiliation{Universit\'e Paris-Dauphine and Karlsruhe Institute of Technology}

\end{aug}

\begin{abstract}
We consider a space structured population model generated by two point clouds: a homogeneous Poisson process $M$ with intensity $n\to\infty$ as a model for a parent generation together with a Cox point process $N$ as offspring generation, with conditional intensity given by the convolution of $M$ with a scaled dispersal density $\sigma^{-1}f(\cdot/\sigma)$. Based on a realisation of $M$ and $N$, we study the nonparametric estimation of $f$ and the estimation of the physical scale parameter $\sigma>0$ simultaneously for all regimes $\sigma=\sigma_n$. We establish that the optimal rates of convergence do not depend monotonously on the scale and we construct minimax estimators accordingly whether $\sigma$ is known or considered as a nuisance, in which case we can estimate it and achieve asymptotic minimaxity by plug-in. The statistical recons\-truction exhibits a competition between a direct and a decon\-volution problem. Our study reveals in particular the existence of a least favourable intermediate inference scale,  a phenomenon that seems to be new. 
\end{abstract}

\begin{keyword}[class=AMS]
\kwd[Primary ]{62G05}
\kwd[; secondary ]{62G07, 62M30, 60G57}
\end{keyword}
\begin{keyword}
\kwd{Nonparametric estimation and minimax theory}
\kwd{point processes}
\kwd{statistical inference across scales}
\kwd{dispersal models}
\kwd{deconvolution}
\end{keyword}


\end{frontmatter}

\section{Introduction}
 
\subsection{Statistical inference across scales} \label{sec: motivation}

Data behave differently at different scales. The interplay between the information parameter ({\it e.g.} number of observations, inverse of the noise level, time length of measurements) and the physical scale of the observables may affect the structure of the underlying statistical model. We encode this idea by extending the familiar notion of a statistical experiment to a family 
\begin{equation} \label{eq: ref scale exp}
\mathcal E =  \big\{\mathsf P_{f,\sigma}^{n}: f \in \Theta\big\}_{n \ge 1, \sigma >0}
\end{equation}
where the probability measures $\mathsf P_{f,\sigma}^{n}$ are simultaneously indexed by an information parameter $n\ge 1$ and a physical scale $\sigma >0$, and that we shall refer to as a (family of) statistical experiment(s) across scales. 
Depending on the choice of $\sigma = \sigma_n$ varying with the information rate $n$, the statistical geometrical properties of $\mathcal E$ (such as LAN type conditions or asymptotic equivalence features as discussed in \citet{le2012asymptotic, van2002statistical}) may differ. In particular, the choice of an optimal procedure may be dictated by different regimes governed by $\sigma_n$.\\ 

It is therefore desirable to understand the larger picture given by \eqref{eq: ref scale exp} \emph{simultaneously for all subsequences} $\sigma_n$. In an asymptotic setting, we may attempt to realise the following program:

\begin{itemize}
\item Identify the optimal estimation for $f$ (in an asymptotic minimax sense for a given loss function) for an arbitrary (but known) $\sigma = \sigma_n$.
\item Considering $\sigma = \sigma_n$ as unknown, estimate simultaneously $\sigma$ and $f$ and achieve optimality for $f$ in this setting.
\end{itemize}


In this paper, we build a family of statistical experiments across spatial scales that exhibit nontrivial behaviours at certain critical levels and for which different estimation procedures with different rates of convergence enter into competition as the scale varies.  This can be of crucial importance in practice, and is in stark contrast with the results in \citet{duval2011statistical, nickl2016high, chorowski2018nonparametric}  where some robustness of estimation methods and of the minimax rates of convergence is observed across time scales
for L\'evy and diffusion processes.\\
%
%

\subsection{A model for dispersal estimation} \label{sec: dispersal inference setting}

\subsubsection*{Informal description}
We start with two random points $X, Y \in \R^d$, where $X \in \mathcal O$ for some domain $\mathcal O \subset \R^d$ represents the trait of a parent in a spatially structured population, and $Y \in \R^d$ is the location of (one of) its children. We are interested in recovering the dispersal distribution of $Y-X$. This means that $Y-X$ has a density function 
\begin{equation} \label{eq: def informal dispersal}
f_\sigma(z) = \sigma^{-d}f(z/\sigma),\qquad z\in\R^d,
\end{equation}
with a physical (dispersal) scale parameter $\sigma >0$ which determines the order of $\E[|Y-X|^2]^{1/2}$, where $|\cdot|$ denotes the Euclidean distance. 
The parameter of interest is the density function $f$.
If we observe an $n$-sample $(X_i,Y_i)_{1 \le i \le n}$, the $Y_i-X_i$ have common distribution $f_\sigma$, and we are in a classical density estimation framework;  the scale $\sigma$ is irrelevant. Assume now that we are rather given two point clouds $\mathcal X$ and $\mathcal Y$ in $\R^d$ with 
$$\mathcal X = \{X_i: i= 1,\ldots, n\}\;\;\text{and}\;\;\mathcal Y = \{Y_j: j=1,\ldots n\},$$
 {\it i.e.} we do not know the match between a parent and its offspring, hence we do not observe the variables $Y_i-X_i$ anymore. Inferring $f$ in such a setting is the topic of the paper.\\
 
 The scale parameter $\sigma$ now becomes crucial. Heuristically, if $\sigma \ll n^{-1/d}$, {\it i.e.} the dispersal scale is small with respect to the typical distance between the locations of the parents population $\mathcal X$, then we may guess the parents-offspring match by a nearest distance procedure, {\it i.e.} take $X_{(j)}$ as the solution to
\[
|Y_{j}-X_{(j)}|=\min\big\{|Y_{j}-X_{i}|:i=1,\dots,n\big\},
\]
and proceed as if the $Y_{j}-X_{(j)}$ were an $n$-sample with distribution $f_\sigma$, up to controlling the mismatch error. 
However, if $\sigma \gg n^{-1/d}$, the mismatch error explodes and alternative methods need to be found. For instance, for a child trait $Y_j$ with parent trait $X_{i_j}$, writing $Y_j = X_{i_j}+\sigma D_{j}$, we see that $D_{j}$ has density $f$, therefore, if the parent distribution $p$ is known, then the $Y_j$ have common distribution $p \ast f_\sigma$, where $\ast$ denotes convolution. We may then implement a deconvolution approach to recover $f$ and ignore the potential information given by the point cloud of the parent traits $\mathcal X$. This has some price, namely the ill-posedness of an inverse problem, and has to be assessed with some care.  
Our objective is to formalise this model and these approaches in order to encompass potential applications as described in Section \ref{sec: applications} below. In particular, we need not impose that $\mathcal X$ and $\mathcal Y$ have the same size, allowing for a random number of parents and children. To provide a complete and transparent picture, we will greatly simplify the technicalities of our approach by restricting ourselves to the one-dimensional case $d=1$, with $\mathcal O = [0,1]$. Extensions to more general domains $\mathcal O$ for the state space of the parents as well as in higher dimension $d >1$ are available and discussed in Section \ref{sec: discussion}.

\subsubsection*{Formal construction of the model}
Random point clouds are equivalently represented by random finite point measures. The location traits of the parent generation, {\it i.e.} the point cloud $\mathcal X  \subset \mathcal O = [0,1]$ is modelled as a homogeneous Poisson point
process 
\[
M(\d x)=\sum_{j}\delta_{X_{j}}(\d x)
\]
on the unit interval $[0,1]$ with intensity measure $m(\d x) =  n \lambda \d x$, where
$n\to\infty$ and $\lambda>0$ is fixed. Note that the size $|\mathcal X|$ is random, with $\E[|\mathcal X|] = n\lambda$. Given a realisation of $M$, the point cloud $\mathcal Y\subset \R$ that represents the traits of the offspring  is generated by a Cox point
process 
\[
N(\d y)=\sum_{j}\delta_{Y_{j}}(\d y)
\]
with (conditional) intensity measure 
\[
\mu\big(M\ast f_{\sigma}\big)(y)\d y=\sum_{i}\mu f_{\sigma}(y-X_{i})\d y,
\]
where the  \emph{dispersal
density } is $f_{\sigma}=\sigma^{-1}f(\cdot/\sigma)$ as in \eqref{eq: def informal dispersal} with \emph{dispersal scale} parameter $\sigma >0$. The parameter $\mu>0$ represents the average number of an offspring given one parent.
Hence, $f_\sigma$ describes the distribution of the random variable $Y_{j}-X_{i_j}$ (when the child $j$ has parent $i_j$). The distance between the traits of the children
and the trait of their parent is of order $\sigma$.
The expected size of the offspring population ({\it i.e.} the average size of $\mathcal Y$) is $n \lambda\mu$. In Figure \ref{fig: simu} we simulate a realisation of the $(M,N)$ process, for different values of $\sigma = \sigma_n$ depending on $n$.\\ 

\begin{figure}
\centering{}\input{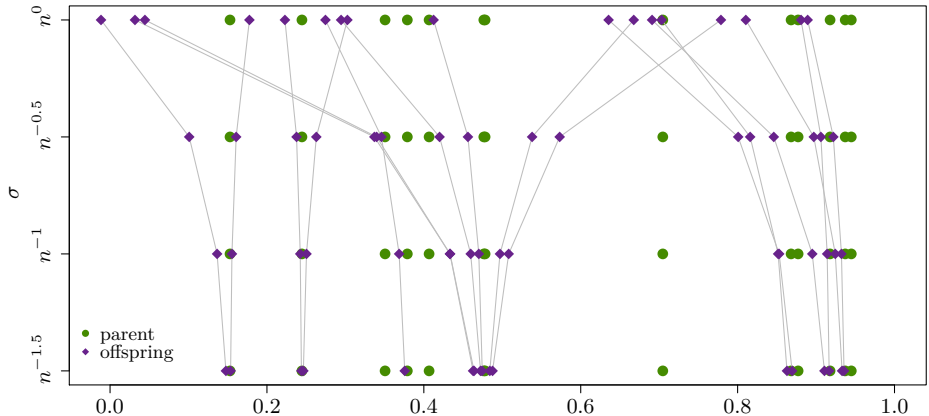}\caption{\label{fig: simu}A realisation of $(M,N)$ for different values of $\sigma = \sigma_n = n^{-a}$, with $a=0, 0.5, 1, 1.5$. The match between parents traits (green points) and their offspring traits (purple diamonds) is graphically obvious for small $\sigma_n = n^{-1.5}$ but becomes more difficult if not impossible as $\sigma_n$ increases. In the statistical experiment generated by $(M,N)$, we are only given one horizontal line at a scale $\sigma_n$.}
\end{figure}

Keeping up with Section \ref{sec: motivation}, we study a statistical experiment of the form \eqref{eq: ref scale exp}, generated by the observation of $(\mathcal X,\mathcal Y)$ or equivalently $(M,N)$, with information $n$ and scale $\sigma$. The unknown parameter is $f$ and $\lambda, \mu$ are considered as nuisance parameters (assumed to be known for the moment). 
Our aim is to reconstruct $f$ asymptotically in a minimax sense as $n \rightarrow \infty$, 
simultaneously for all scaling regimes $\sigma=\sigma_{n}$.

\subsection{Dispersal inference in applications} \label{sec: applications}

We briefly present some specific application dom\-ains compatible with the approach of dispersal inference as described in Section \ref{sec: dispersal inference setting}.  Admit\-tedly, further adjustments may be needed in order to be directly applicable; in particular, there might be a natural one-to-one
correspondence between parents and children or not.  

\subsubsection*{Example 1: Service time estimation in M/G/$\infty$ queuing models}
Dating back to \citet{brown1970}, certain M/G/$\infty$ queuing models are embedded in our approach, see in particular the recents results of \citet{goldenshluger2015, goldenshluger2016, goldenshluger2019nonparametric}. Here, the state space of the parents $\mathcal O \subset \R$ represents time. The parent location trait is identified with an input arrival of a request to a server, according to Poisson arrivals at rate $\lambda$. Once the (random) service time of the request is fullfilled, an output is observed, that corresponds to the location trait of an offspring. See for instance \citet{baccelli2009inverse} where the emphasis is put on queueing systems where the service time cannot be observed. A small $\sigma$ compared to $1/\lambda$ indicates that the service time is small compared to the order of magnitude of a typical interarrival between two queries, in which case one may take the time between an input and an output as a proxy for the service time. Otherwise, this is no longer true and alternative methods must be sought. Most aforementioned studies assume $\sigma\lambda=1$. The case where $\sigma$ is larger than $1/\lambda$ has been adressed by \citet{BlanghapsEtAl2013} where it is still required that $\lambda\sigma$ is bounded.

The goal is to estimate the density of service time, that matches exactly with the dispersal density $f$ of our model. However, in the M/G/$\infty$ model, to an incoming call, one associate one output exactly, which is slightly more stringent than having $\mu = 1$ only. 
See also \citet{hall2004nonparametric} and Section \ref{sec: exactly one} for a more specific discussion in that direction. 

\subsubsection*{Example 2: Poisson random convolution in functional genomics}
This is actually the application that stimulated our approach, following informal discussions with our colleague Marie Doumic that are formulated in \citet{jiangEtAl2015}. See also the recent work by \citet{bonnet2022uniform}. 
The objective is to propose a model of distance interaction between motifs (or occurences of transcription regulatory elements) along DNA sequences. Related literature using point processes alternatives is developed for instance in \citet{gusto2005fado, carstensen2010multivariate}.

Dispersal inference proposes an alternative approach compatible with \citet{jiangEtAl2015}, at least at a conceptual level: along the DNA sequence, transcription binding sites are observed according to a Poisson rate $\lambda$ and serve as a parent generation model. Conditional to their parent location, transcription start sites (TSS) along the sequence are drawn via a random distribution $f$ that we wish to infer, the dispersal distribution. Depending on the dispersal scale $\sigma$, we are back to our original problem and we obtain a continuous nonparametric alternative to the model described in \citet{jiangEtAl2015}.

\subsubsection*{Example 3: Dispersal distance in plants genetics}

Introgression from cultivated to wild plants is a challenging problem for evolutionary ecology, especially in the context of genetically engineered crops. The study of gene flow from crops to wild relatives starts with understanding the typical dispersal distribution -- in a  spatial sense -- between plants and their offspring. Although our model is too simple to account for various heterogeneity in natural environment, we emphasise some encouraging similarities: in the study of \citet{arnaud2003evidence}, plants of interest and their offspring are distributed along a river bank. This accounts for a dispersal density a state space understood as a one-dimensional manifold (a curve), which is similar (and rate equivalent) to estimating a one-dimensional density, cf. \cite{berenfeld2021density}. 

Beyond the specific case of measuring dispersal along such idealised geometric features, the problem of estimating the distance between parents and saplings (accounting for seed dispersal from maternal or paternal parents plus pollen movement) is explicitly addressed in \citet{isagi2000microsatellite} via microsatellite analysis. Other plant based dispersal  issues such as seed versus pollen dispersal from spatial genetic structure are discussed in \citet{heuertz2003estimating}, see also \citet{lavorel1995dispersal}  and the references therein. 

\subsubsection*{Example 4: Estimating diffusivity based on counting occupation numbers of particles}
In a suspension of particles in a fluid, a Poissonian number of particles is recorded as they enter a fixed domain $\mathcal A$ and likewise when they leave $\mathcal A$. Applications in  
 fluctuation spectroscopy enables one to infer the diffusivity (or other parameters) from such counting data, assuming that the particles have velocity $(V_t)_{t \ge 0}$ with random dynamics governed by a diffusion process
 $$\d V_t =  -\beta V_t\d t+\sqrt{2\beta D}\d W_t$$
 where  $(W_t)_{t \ge 0}$ is a Wiener process and $\beta >0$ is a thermal relaxation parameter. There exist explicit formulae that relate the sojourn time of a particle within $\mathcal A$ and the diffusivity $D$, when the process is at equilibrium, see in particular \citet{binghamDunham1997}. We thus have a typical dispersal inference problem, where the dispersal density corresponds here to the sojourn time of the particles. See also the recent paper by \citet{goldenshlugerJacobovic2021}.

\subsection{Results and organisation of the paper} \label{sec: main results + orga}

\medskip{}

We first analyse the interaction between parents $\mathcal X$ and children $\mathcal Y$ via the correlation structure between the measures $M(\d x)$ and $N(\d y)$. In Proposition \ref{prop:expFormula} in Section \ref{sec:estimator} below,  building on the approach of \citet{goldenshluger2016},  we establish the formula
\begin{equation} \label{eq: first rep}
\frac{1}{n\lambda \mu} \E\Big[\frac{M(\d x)N(\d y)}{\d x\d y}\Big] = n\lambda (f_{\sigma} \ast p)(y) + f_{\sigma}(x-y),
\end{equation}
where $p= {\eins_{[0,1]}}$ denotes the density function of the parent distribution. Formula \eqref{eq: first rep} reveals the competition between a direct approach and a convolution problem, as mentioned above. From the observation of $(M,N)$, we have access to empirical averages of the form 
$$\sum_{i,j}\varphi(X_i,Y_j) = \int_{[0,1] \times \R} \varphi(x,y)M(\d x)N(\d y),$$ 
for test functions $\varphi$. We can take advantage of the information given by the first term in the right-hand side of \eqref{eq: first rep} by picking $\varphi$ of the form $\varphi(x,y) = \psi(y)$ and thus ignoring the information given by the parent generation. For the second term, we pick $\varphi$ of the form $\varphi(x,y) = \psi\big((x-y)/\sigma)\big)$ and we can take benefit from the interplay between the parent generation and its offspring. This results in generic estimators of the form
\begin{equation} \label{eq: inspiration}
\sum_{i,j}\varphi^\star\big(Y_j,(X_i-Y_j)/\sigma\big)
\end{equation}
for a specific choice of $\varphi^\star$. 
Whereas these heuristics give an overall flavour of the statistical model structure, the general situation is more subtle.
In Section~\ref{sec:estimator}, we elaborate on the properties of the point process $(M,N)$ to obtain an estimator of $f(z_0)$ for an arbitrary point $z_0 \in \R$. It takes the form
\begin{equation*}\widehat f^\star_{h_1,h_2}(z_0) =\begin{cases}\displaystyle
                                   \frac{1}{n\lambda h_1}\hat{f}_{h_{1},h_{2}}(z_0),\qquad &\text{for large scales},\\
                                   \displaystyle\frac{1}{h_2}\hat{f}_{h_{1},h_{2}}(z_0)-\sigma n\lambda,\qquad&\text{for small scales}
                                 \end{cases}
\end{equation*}
where 
\begin{align*}
\hat{f}_{h_{1},h_{2}}(z_0)= & \frac{1}{n\lambda\mu\sigma h_{1}}\sum_{i,j}\psi'\Big(\frac{z_0}{h_{2}}-\frac{Y_{j}}{\sigma h_{2}}\Big){\psi}\Big(\frac{z_0}{h_{1}}-\frac{Y_{j}-X_{i}}{\sigma h_{1}}\Big)
\end{align*}
is inspired by \eqref{eq: inspiration}
for a suitable kernel $\psi$ (and $\psi'$ its derivative).
\begin{figure}
\centering{}
\begin{tikzpicture}[x=.5pt,y=.5pt]
\definecolor{fillColor}{RGB}{255,255,255}
\path[use as bounding box,fill=fillColor,fill opacity=0.00] (0,0) rectangle (722.70,361.35);

\begin{scope}
\path[clip] (  0.00,  0.00) rectangle (722.70,361.35);
\definecolor{drawColor}{RGB}{0,0,0}

\path[draw=drawColor,line width= 0.4pt,line join=round,line cap=round] ( 49.20, 49.20) --
	(673.50, 49.20) --
	(673.50,348.15) --
	( 49.20,348.15) --
	( 49.20, 49.20);
\end{scope}

\begin{scope} 
\definecolor{drawColor}{RGB}{0,0,0}

\node[text=drawColor,anchor=base,inner sep=0pt, outer sep=0pt, scale= 0.9] at ( 28.39, 45.35) {0};

\node[text=drawColor,anchor=base,inner sep=0pt, outer sep=0pt, scale= 0.9] at (255.30, 30.60) {-1};

\node[text=drawColor,anchor=base,inner sep=0pt, outer sep=0pt, scale= 0.9] at (673.50, 30.60) {0};

\node[text=drawColor,anchor=base west,inner sep=0pt, outer sep=0pt, scale= 0.9] at ( 25.39,340.94) {$\frac12$};

\node[text=drawColor,anchor=base west,inner sep=0pt, outer sep=0pt, scale= 0.9] at ( 10.02,244.78) {$\frac{s}{2s+1}$};

\node[text=drawColor,anchor=base west,inner sep=0pt, outer sep=0pt, scale= 0.9] at ( 10.02,193.95) {$\frac{s}{2s+2}$};

\node[text=drawColor,anchor=base west,inner sep=0pt, outer sep=0pt, scale= 0.9] at ( 10.02,93.30) {$\frac{s}{6s+6}$};

\node[text=drawColor,anchor=base west,inner sep=0pt, outer sep=0pt, scale= 0.9] at (680.94,165.06) {$\frac{s}{2s+3}$};

\node[text=drawColor,anchor=base west,inner sep=0pt, outer sep=0pt, scale= 0.9] at (332.96, 30.14) {$-\frac{2s+1}{2s+2}$};

\node[text=drawColor,anchor=base west,inner sep=0pt, outer sep=0pt, scale= 0.9] at (402.33, 30.14) {$-\frac{4s+3}{6s+6}$};

\node[text=drawColor,anchor=base west,inner sep=0pt, outer sep=0pt, scale= 0.9] at (305, 8) {$\log_n\sigma _n$};
\node[text=drawColor,rotate= 90.00,anchor=base,inner sep=0pt, outer sep=0pt, scale = 0.9] at ( 0,200) { $\log_nr_n$};

\end{scope}

\begin{scope}
\path[clip] ( 49.20, 49.20) rectangle (673.50,348.15);
\definecolor{drawColor}{RGB}{0,0,0}

\path[draw=drawColor,line width= 0.6pt,line join=round,line cap=round] ( 49.20,248.50) -- (257.30,248.50);

\path[draw=drawColor,line width= 0.6pt,line join=round,line cap=round] (361.35,198.67) -- (430.72, 99.03);
	
\path[draw=drawColor,line width= 0.6pt,line join=round,line cap=round] (430.72, 99.03) -- (673.50,168.78);

\path[draw=drawColor,line width= 0.6pt,line join=round,line cap=round] (257.30,248.50) -- (361.35,198.67);

\path[draw=drawColor,line width= 0.4pt,dash pattern=on 4pt off 4pt ,line join=round,line cap=round] (257.30,348.15) -- (465.40, 49.20);

\path[draw=drawColor,line width= 0.4pt,dash pattern=on 4pt off 4pt ,line join=round,line cap=round] (257.30,248.50) -- (673.50, 49.20);

\path[draw=drawColor,line width= 0.4pt,dash pattern=on 4pt off 4pt ,line join=round,line cap=round] (430.72, 99.03) -- (257.30, 49.20);

\path[draw=drawColor,line width= 0.4pt,dash pattern=on 1pt off 3pt ,line join=round,line cap=round] (257.30, 49.20) -- (257.30,248.50);

\path[draw=drawColor,line width= 0.4pt,dash pattern=on 1pt off 3pt ,line join=round,line cap=round] ( 49.20,198.67) -- (361.35,198.67);

\path[draw=drawColor,line width= 0.4pt,dash pattern=on 1pt off 3pt ,line join=round,line cap=round] ( 49.20, 99.03) -- (430.72, 99.03);

\path[draw=drawColor,line width= 0.4pt,dash pattern=on 1pt off 3pt ,line join=round,line cap=round] (430.72, 49.20) -- (430.72, 99.03);

\path[draw=drawColor,line width= 0.4pt,dash pattern=on 1pt off 3pt ,line join=round,line cap=round] (361.35, 49.20) -- (361.35,198.67);

\end{scope}
\definecolor{drawColor}{RGB}{255,0,0}






\begin{scope}
 
\end{scope}

\end{tikzpicture}
\caption{\label{fig:rates}Dependence of the optimal convergence rate $r_{n}$ on the
dispersal rate $\sigma_{n}$ in a $(\log_{n}\sigma_{n},\log_{n}r_{n})$-plot.}
\end{figure}
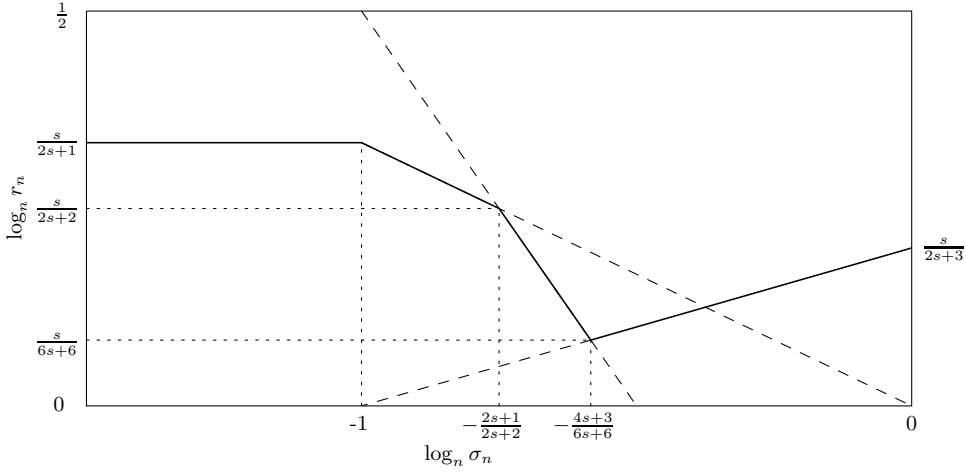
For an optimised choice of the bandwidths $h_1$ and $h_2$, we prove in Theorem \ref{thm: rates main}
that
\[
\sup_f\E\Big[\big(\hat{f}_{h_1,h_2}^\star(z_0)-f(z_0)\big)^{2}\Big]^{1/2} \lesssim r_{n},
\]
where the notation $A\lesssim B$ is equivalent to the Landau notation $A=O(B)$, the supremum is taken over H\"older balls of regularity $s>0$ locally around $z_0$, and 
\begin{equation}\label{eq:rate}
r_{n}=\begin{cases}
n^{-s/(2s+1)}, & \text{if }\sigma\le n^{-1},\\
\sigma^{s/(2s+1)}, & \text{if }\sigma\in[n^{-1},n^{-(2s+1)/(2s+2)}),\\
\sigma\sqrt{n}, & \text{if }\sigma\in\big[n^{-(2s+1)/(2s+2)},n^{-(4s+3)/(6s+6)}\big),\\
(n\sigma)^{-s/(2s+3)}, & \text{if }\sigma\in[n^{-(4s+3)/(6s+6)},1].
\end{cases}
\end{equation}
The shape of the rate of convergence $r_n = r_n(\sigma)$ as $\sigma = \sigma_n$ varies is illustrated in Figure \ref{fig:rates}. We prove in Theorem \ref{thm:lowerBound} that this result is indeed optimal:
\[
\liminf_{n\to\infty}\inf_{\hat{\theta}}\sup_{f}r_{n}^{-1}\E_{f}\Big[\big(\hat{\theta}-f(z_0)\big)^{2}\Big]^{1/2}>0,
\]
where the infimum is taken over all estimators $\widehat \vartheta$ built upon the point clouds $\mathcal X$ and $\mathcal Y$, and the supremum  is taken over H\"older balls of regularity $s>0$ locally around $z_0$.

As illustrated in Figure \ref{fig:rates}, a direct estimation regime with (the usual) minimax rate $r_n = n^{-s/(2s+1)}$ dominates for $\sigma \ll n^{-1}$ (the far left side of the picture), whereas for fixed $\sigma$, we have $r_n = n^{-s/(2s+3)}$, {\it i.e.} the minimax rate of convergence of an inverse problem of order one (the far right side of the picture). However, when $\sigma_n$ slowly goes to $0$, the inverse problem minimax rate deteriorates to $(n\sigma_n)^{-s/(2s+3)}$. Surprisingly, other regimes appear in the intermediate regime $\sigma_n \in [n^{-1}, n^{-(4s+3)/(6s+6)}]$. In particular, we find a worst case region, around the scale $\sigma_n \approx n^{-(4s+3)/(6s+6)}$ that yields the exotic minimax rate $n^{-s/(6s+6)}$. We discuss this phenomenon in detail in Section \ref{sec:rates}. In Section \ref{sec: unknown sigma}, we consider the case of an unknown scale $\sigma = \sigma_n$. We first show that it is possible to estimate $\sigma$ 
so that we can ultimately decide whether $n\sigma$ is sufficiently large to apply the $n\sigma \rightarrow \infty$ asymptotics or not. We establish in particular in Section \ref{sec: esti unknown sigma} a bound for the relative error $(\widehat \sigma-\sigma)/\sigma$ of our estimator $\widehat \sigma$. This is the gateway for a plug-in strategy to estimate $f$ optimally when $\sigma$ is unknown and considered as a nuisance parameter as we demonstrate in Section \ref{sec: esti f plugin}. The sensitivity of the plug-in estimator $\widehat f^\star_{h_1,h_2}(z_0) = \widehat f^\star_{h_1,h_2}(z_0)(\widehat \sigma)$ is controlled  via the smoothness of the process $\sigma \mapsto \widehat f^\star_{h_1,h_2}(z_0)(\sigma)$ via a chaining argument based on Kolmogorov-Chentsov criterion. We show that the optimal rates are achievable in probability.\\

The rest of the paper is organised as follows: In Section~\ref{sec:estimator}
we construct an estimator of $f(z_0)$ that takes $\mathcal X$ and $\mathcal Y$ as inputs and that adjusts to the scale $\sigma = \sigma_n$. Convergence rates for the estimator and matching minimax lower bounds are given in Section~\ref{sec:rates} and Section~\ref{sec:lowerBound}, respectively. The estimation of $\sigma$ is studied in Section \ref{sec: esti unknown sigma} while the estimation of $f$ when $\sigma$ is unknown via plug-in is undertaken in Section \ref{sec: esti f plugin}. A discussion with possible extensions is the content of Section~\ref{sec: discussion}. 
A short numerical simulation study is proposed in Section \ref{sec: numerical example}. All proofs are postponed to Section~\ref{sec:Proofs}.

\section{Main results}

\subsection{Construction of estimators across scales\label{sec:estimator}}

\subsubsection*{The correlation structure of $(M,N)$}

Our starting point is the analysis of the correlation between $M$ and $N$, inspired by the approach developed in \citet{goldenshluger2016}. 
Yet, in our study, the fact 
that the parent data $\mathcal X$ are distributed on
a bounded interval has a considerable
impact
on the correlation structure of $(M,N)$. 

\begin{prop} 
\label{prop:expFormula}Let $(A_{i})_{1 \le i \le I}$ and $(B_{j})_{1 \le j \le J}$
be two families of disjoint Borel subsets of $[0,1]$ and $\R$, respectively.
Then for any $(\eta_{1},\dots,\eta_{I})\in\R^{I}$ and $(\xi_{1},\dots,\xi_{J})\in\R^{J}$
we have
\begin{align}
 & \log\E\Big[\exp\Big(\sum_{i=1}^{I}\eta_{j}M(A_{i})+\sum_{j=1}^{J}\xi_{j}N(B_{j})\Big)\Big]\label{eq:campbell}\\
 & \quad=n\lambda\sum_{i=1}^{I}(e^{\eta_{i}}-1)|A_{i}|+n\lambda\int_{0}^{1}\Big(\exp\Big(\mu\sum_{j=1}^{J}(e^{\xi_{j}}-1)\int_{B_{j}}f_{\sigma}(y-x)\d y\Big)-1\Big)\d x\nonumber \\
 & \qquad\qquad+n\lambda\sum_{i=1}^{I}(e^{\eta_{i}}-1)\int_{A_{i}}\Big(\exp\Big(\mu\sum_{j=1}^{J}(e^{\xi_{j}}-1)\int_{B_{j}}f_{\sigma}(y-x)\d y\Big)-1\Big)\d x,\nonumber 
\end{align}
where $|A|$ denotes the Lebesgue measure of $A \subset \R$. 
\end{prop}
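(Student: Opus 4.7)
The plan is to reduce the joint Laplace transform in \eqref{eq:campbell} to two successive applications of the Campbell/Laplace formula for Poisson point processes: first for the Cox process $N$ conditional on $M$, and then for the underlying parent process $M$.

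First I rewrite the linear functionals as integrals against the point measures. Because the $A_i$ are disjoint and the $B_j$ are disjoint, the simple functions
\[
g_1(x)=\sum_{i=1}^{I}\eta_i\eins_{A_i}(x),\qquad g_2(y)=\sum_{j=1}^{J}\xi_j\eins_{B_j}(y)
\]
satisfy $\sum_i\eta_iM(A_i)=\int g_1\,\d M$ and $\sum_j\xi_jN(B_j)=\int g_2\,\d N$, and moreover $e^{g_1(x)}-1=\sum_i(e^{\eta_i}-1)\eins_{A_i}(x)$ and $e^{g_2(y)}-1=\sum_j(e^{\xi_j}-1)\eins_{B_j}(y)$.

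Next I condition on $M$. Given $M$, the process $N$ is Poisson with intensity $\mu(M\ast f_\sigma)(y)$, so the standard Laplace functional for Poisson processes yields
\[
\E\bigl[e^{\int g_2\,\d N}\,\big|\,M\bigr]=\exp\Bigl(\mu\int\bigl(e^{g_2(y)}-1\bigr)(M\ast f_\sigma)(y)\,\d y\Bigr)=\exp\Bigl(\mu\int G(x)\,M(\d x)\Bigr),
\]
where by Fubini
\[
G(x)=\int\bigl(e^{g_2(y)}-1\bigr)f_\sigma(y-x)\,\d y=\sum_{j=1}^{J}(e^{\xi_j}-1)\int_{B_j}f_\sigma(y-x)\,\d y.
\]
Multiplying by $\exp(\int g_1\,\d M)$ and taking expectations, a second application of the Laplace functional (this time for the Poisson process $M$ on $[0,1]$ with intensity $n\lambda\,\d x$) gives
\[
\E\Bigl[\exp\Bigl(\int(g_1+\mu G)\,\d M\Bigr)\Bigr]=\exp\Bigl(n\lambda\int_0^1\bigl(e^{g_1(x)}e^{\mu G(x)}-1\bigr)\d x\Bigr).
\]

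The final step is a purely algebraic rearrangement of the integrand. Using the disjointness of the $A_i$, I write $e^{g_1(x)}=1+\sum_i(e^{\eta_i}-1)\eins_{A_i}(x)$ and split
\[
e^{g_1(x)}e^{\mu G(x)}-1=\bigl(e^{\mu G(x)}-1\bigr)+\sum_i(e^{\eta_i}-1)\eins_{A_i}(x)+\sum_i(e^{\eta_i}-1)\eins_{A_i}(x)\bigl(e^{\mu G(x)}-1\bigr).
\]
Integrating over $[0,1]$ and multiplying by $n\lambda$ produces exactly the three terms on the right-hand side of \eqref{eq:campbell}.

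No real obstacle arises: the only points requiring care are the measurability/integrability needed to apply Fubini and the Laplace functional unconditionally (handled by taking all $\eta_i,\xi_j\in\R$ and noting that, since $A_i,B_j$ have finite Lebesgue measure and $f_\sigma$ is a probability density, every integrand is bounded), and the accounting in the final algebraic step to ensure that the $|A_i|$ factor in the first term on the right of \eqref{eq:campbell} is properly isolated from the coupled term involving $G$.
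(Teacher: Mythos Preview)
Your proposal is correct and follows essentially the same route as the paper: condition on $M$ and apply the Laplace functional for the Poisson process $N$, rewrite the resulting exponent as an integral against $M$, then apply the Laplace functional for $M$, and finish with the algebraic split of $e^{g_1(x)}e^{\mu G(x)}-1$. The paper's $h(x)$ is your $\mu G(x)$, and its final rearrangement (splitting the integral over $\bigcup_i A_i$ and its complement) is equivalent to your identity $e^{g_1}=1+\sum_i(e^{\eta_i}-1)\eins_{A_i}$.
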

The proof relies on Campbell's exponential formula and is postponed to Section \ref{sec: proof campbell}. By differentiating the result of Proposition~\ref{prop:expFormula}, we obtain the following explicit representation of the correlation structure of $M$ and $N$. 
\begin{cor} 
\label{cor: intensity}
For any Borel sets $A\subset[0,1]$ and (bounded) $B\subset\R$
we have
\begin{align*}
\E\big[M(A)N(B)\big] & =n^{2}\lambda^{2}\mu|A|\int_{0}^{1}\int_{B}f_{\sigma}(y-x)\,\d y\,\d x+n\lambda\mu\int_{A}\int_{B}f_{\sigma}(y-x)\,\d y\,\d x.
\end{align*}
\end{cor}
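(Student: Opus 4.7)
The plan is to derive the second-order joint moment of $(M(A),N(B))$ from the cumulant-like formula of Proposition~\ref{prop:expFormula} by differentiating the joint Laplace/moment generating function at the origin. Specialising Proposition~\ref{prop:expFormula} to $I=J=1$, $A_1=A$, $B_1=B$, and writing $\eta=\eta_1$, $\xi=\xi_1$, denote
\begin{equation*}
\psi(\eta,\xi)=\log\E\bigl[\exp(\eta M(A)+\xi N(B))\bigr],
\qquad
g(x)=\int_B f_\sigma(y-x)\,\d y.
\end{equation*}
Proposition~\ref{prop:expFormula} then reads
\begin{equation*}
\psi(\eta,\xi)=n\lambda(e^\eta-1)|A|+n\lambda\int_0^1\bigl(e^{\mu(e^\xi-1)g(x)}-1\bigr)\d x+n\lambda(e^\eta-1)\int_A\bigl(e^{\mu(e^\xi-1)g(x)}-1\bigr)\d x.
\end{equation*}
Since the right-hand side is smooth in $(\eta,\xi)$ near the origin and the required moments are finite, I may differentiate under the integral sign.

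The key identity is $\partial_\eta\partial_\xi\phi=\phi\bigl(\partial_\eta\partial_\xi\psi+\partial_\eta\psi\,\partial_\xi\psi\bigr)$ for $\phi=e^{\psi}$, together with $\phi(0,0)=1$, so that
\begin{equation*}
\E[M(A)N(B)]=\partial_\eta\partial_\xi\phi(0,0)=\partial_\eta\partial_\xi\psi(0,0)+\partial_\eta\psi(0,0)\cdot\partial_\xi\psi(0,0).
\end{equation*}
A direct computation gives $\partial_\eta\psi(0,0)=n\lambda|A|=\E[M(A)]$ and $\partial_\xi\psi(0,0)=n\lambda\mu\int_0^1 g(x)\,\d x=\E[N(B)]$; the latter sanity-checks against Campbell's formula for the intensity of $N$. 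Only the last term in the expression of $\psi$ contributes to the mixed partial derivative, and one finds $\partial_\eta\partial_\xi\psi(0,0)=n\lambda\mu\int_A g(x)\,\d x$.

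Combining these computations yields
\begin{equation*}
\E[M(A)N(B)]=n\lambda\mu\int_A\!\int_B f_\sigma(y-x)\,\d y\,\d x+n^2\lambda^2\mu\,|A|\int_0^1\!\int_B f_\sigma(y-x)\,\d y\,\d x,
\end{equation*}
which is the claimed identity. No substantive obstacle is anticipated: the argument is a routine second-order differentiation of the generating functional, the only care being to apply the $\log$-to-$\exp$ identity at the origin correctly and to identify each first-order derivative with the respective Campbell intensity.
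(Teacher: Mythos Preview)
Your argument is correct and follows essentially the same route as the paper: both derive the mixed moment by differentiating the exponential formula of Proposition~\ref{prop:expFormula} at the origin. The only cosmetic difference is that the paper differentiates $\Psi=e^{\psi}$ directly, whereas you differentiate the cumulant generating function $\psi$ and then invoke $\partial_\eta\partial_\xi e^{\psi}=e^{\psi}(\partial_\eta\partial_\xi\psi+\partial_\eta\psi\,\partial_\xi\psi)$; the computations and the result coincide.
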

Corollary \ref{cor: intensity} reveals the infinitesimal correlation structure
$$\E[M(\d x)N(\d y)]=n\lambda\mu\big(n\lambda(f_{\sigma}\ast p)(y)+f_{\sigma}(y-x)\big)\d y\,\d x,$$
with $p = \eins_{[0,1]}$. Applied to  a well-behaved test function $\varphi\colon [0,1] \times \R \rightarrow \R$ we obtain
\begin{align*}
 & \frac{1}{n\lambda \mu}\E\Big[\int_{[0,1]\times \R}\phi(x,y)M(\d x)N(\d y)\Big] \\
 &\quad=n\lambda\int_{0}^{1}\int_{\R}\phi(x,y)(f_{\sigma}\ast p)(y)\d y\d x+\int_{0}^{1}\int_{\R}\phi(x,x+\sigma z)f(z)\d z\d x.
\end{align*}
In order to obtain information on  $f$ from the first integral, the input function $\varphi(x,y)$ should depend on $y$ solely, while in the second integral, $\phi(x,y)$ should rather depend on
$(y-x)/\sigma$. We therefore pick a test function of the form 
$$\varphi^\star(x,y) = \psi_1(y)\psi_2\big((y-x)/\sigma\big).$$
In the limit $n \rightarrow \infty$, we expect the empirical mean to be close to its expectation so that the approximation
$$\sum_{i,j}\phi^{\star}(X_{i},Y_{j}) \approx \E\big[\sum_{i,j}\phi^{\star}(X_{i},Y_{j})\big]$$
is valid. Hence, we asymptotically have access to 
\begin{equation}
\frac{1}{n\lambda \mu}\E\Big[\int_{[0,1] \times \R}\psi_1(y)\psi_2\big((y-x)/\sigma\big)M(\d x)N(\d y)\Big]
  =\sigma n\lambda\, \mathcal U_\sigma(f \ast p)+ \mathcal V_\sigma(f),
 \label{eq:exp}
\end{equation}
where
$$\mathcal U_\sigma(f \ast p) = \int_{\R}\psi_1(y)(\psi_2\ast\eins_{[0,1/\sigma]})(y/\sigma)(f_{\sigma}\ast p)(y)\d y$$
and
$$\mathcal V_\sigma(f)=  \int_{\R}(\psi_1\ast\eins_{[-1,0]})(\sigma z)\psi_2(z)f(z)\d z.$$
It is noteworthy that 
$$|\mathcal U_\sigma(f \ast p)| \le \|\psi_1\|_{L^1}\|\psi_2\|_{L^1}\;\;\text{and}\;\;|\mathcal V_\sigma(f)| \le\|f\|_{\infty}\|\psi_1\|_{L^1}\|\psi_2\|_{L^1},$$
showing that {\bf 1)} the functionals $\mathcal U_\sigma$ and $\mathcal V_\sigma$ are not sensitive to the order of magnitude of $\sigma$, and {\bf 2)} the influence of the test functions $\psi_i$ is bounded in $L^1$-norm, hence they can be subsequently chosen as kernels that weakly converge to a Dirac mass as $n \rightarrow \infty$.


\subsubsection*{The deconvolution approach via $\mathcal U_\sigma(f \ast p)$}

Pick $\psi_2=1$ as a constant function in \eqref{eq:exp} to obtain 
\begin{align*}
\E\big[|\mathcal X|\sum_{j}\psi_1(Y_{j})\big] & =n^2\lambda^2\mu\int_{\R}\psi_1(y) (f_{\sigma}\ast p)(y)\d y+n\lambda \int_0^1\int_{\R} \psi_1(x+\sigma z)f(z)\d z\d x \\
& = n^2\lambda^2\mu\Big(\int_{\R}\psi_1(y) (f_{\sigma}\ast p)(y)\d y +O(n^{-1})\Big),
\end{align*}
where $|\mathcal X| = M([0,1]) = \lambda n +O_{\P}(n^{1/2})$ is the total (random) number of parents. Ignoring remainder terms and using the fact that $|\mathcal Y| = N(\R) = n\lambda \mu + O_{\P}(n^{1/2})$ we also have the approximation
\begin{equation} \label{eq: empirical distrib}
\frac{1}{|\mathcal Y|}\sum_{j}\psi_1(Y_{j}) \approx \int_{\R}\psi_1(y)(f_{\sigma}\ast p)(y)\d y .
\end{equation}
The empirical estimate \eqref{eq: empirical distrib} is transparent: each child with trait $Y_{j}$ has a parent with trait $X_{i_j}$
such that $Y_{j}=X_{i_j}+\sigma D_{j}$, where $D_{j}$ is distributed according to the dispersal density $f$. With $X_{i_j} \sim p$, we obtain
$Y_{j}\sim f_{\sigma}\ast p$.
However, the parent trait distribution $p$ is uniform on $[0,1]$, its Fourier transform oscillates and vanishes on a discrete set, hence a classical deconvolution estimators based
on spectral approaches cannot be readily applied. While there are some general
constructions in the literature to overcome this problem (see {\it e.g.} \citet{meister2007deconvolution, delaigleMeister2011, belomestnyGoldenshluger2021} and the references therein), we take a more explicit route: we elaborate on the  approach of \citet{groeneboomJongbloed2003}, relying on the specific structure of a uniform $p = \eins_{[0,1]}$. (The case of more general parent trait distribution is discussed in Section \ref{sec: other parent trait distrib}.) Denoting by $F$ the cumulative distribution of $D_{ij}$ and writing $g_{\sigma}=f_{\sigma} \ast p$, we have
\begin{align*}
g_{\sigma}(y)= & \int_{\R}\eins_{[0,1]}(y-z)f_{\sigma}(z)\d z
=  \int_{\R}\eins_{[y-1,y]}(z)f_{\sigma}(z)\d z=F\big(\tfrac{y}{\sigma}\big)-F\big(\tfrac{y-1}{\sigma}\big),
\end{align*}
hence the representation
\begin{align}
F\big(\tfrac{y}{\sigma}\big) & =  g_{\sigma}(y)+F\big(\tfrac{y-1}{\sigma}\big) = g_{\sigma}(y)+g_{\sigma}(y-1)+F\big(\tfrac{y-2}{\sigma}\big) =  \dots=\sum_{\ell \ge 0}g_{\sigma}(y-\ell), \label{eq:dutchFormular}
\end{align}
valid for every $y\in\R$. Based on the observation $\mathcal Y$, the density $g_{\sigma}$ can be estimated at $z_0 \in \R$
by a kernel density estimator with kernel $K$:
and bandwidth $h >0$ 
$$\widehat g_{\sigma,h}(z_0) = \frac{1}{|\mathcal Y|}\sum_{j}\frac{1}{h}K\Big(\frac{\sigma z_0-Y_j}{h}\Big).$$
We  then use representation \eqref{eq:dutchFormular}
to obtain an estimator of $F(z_0)$ via
\begin{equation} \label{eq: dutch esti}
\widehat{F}_{h}(z_0)=\frac{1}{|\mathcal Y|}\sum_{j}\sum_{\ell \ge 0}\frac{1}{h}K\Big(\frac{\sigma z_0-\ell-Y_j}{h}\Big).
\end{equation}
Note that for compactly supported kernels
the sum in $\ell$ is finite. For simplicity, we further consider the case
where $f$ is compactly supported and will adjust our assumptions accordingly. With no loss of generality, we assume 
$$\supp f\subset  \Big[-\frac{1}{2},\frac{1}{2}\Big]$$ 
so that only the term for $\ell=0$ in \eqref{eq:dutchFormular} is non-zero and we can omit all terms with $\ell\ge1$ in \eqref{eq: dutch esti}.

We then take the derivative of $\widehat{F}_{h}(z_0)$ in \eqref{eq: dutch esti} and the choice of bandwidth $h=\sigma h_1$ that scales with $\sigma$ and that will prove technically convenient. We finally obtain a \emph{deconvolution estimator} of $f(z_0)$ by setting
$$\widehat f_{h_1}^{\tt dec}(z_0) = \frac{1}{|\mathcal Y|}\sum_j\frac{1}{\sigma h_1^2}K'\Big(\frac{z_0}{h_{1}}-\frac{Y_j}{\sigma h_{1}}\Big).$$
We recover the representation \eqref{eq: empirical distrib} with
\begin{equation} \label{eq: def H}
\psi_1 = \frac{1}{\sigma h_1^2}K'\Big(\frac{z_0}{h_{1}}-\frac{\cdot}{\sigma h_{1}}\Big).
\end{equation}
Note that the convolution term in (\ref{eq:exp}) is uninformative
if $M$ is a homogeneous Poisson point process on whole real line
as in \citet{goldenshluger2016} or on the torus as in \citet{jiangEtAl2015}.

\subsubsection*{The interaction approach via $\mathcal V_\sigma(f)$}

While the deconvolution approach ignores the infor\-mation of the parents,
an estimator based on the interaction of parents and their offspring can
be constructed via $\psi_2$, taking now $\psi_1=1$ to be constant. From \eqref{eq:exp}, we obtain
\begin{align*}
&\E\Big[\sum_{i,j}\psi_2\Big(\frac{Y_{j}-X_{i}}{\sigma}\Big)\Big] \\
&=n\lambda\mu\int_{\R}\psi_2(z)f(z)\d z+n^2\sigma\lambda^{2}\mu
\int_{\R}(\psi_2\ast\eins_{[0,1/\sigma]})(y/\sigma)(f_{\sigma}\ast p)(y)\d y
\\
& = n\lambda \mu\Big(\int_{\R}\psi_2(y)f(z)\d z+O(n\sigma)\Big).
\end{align*}
 The bias is small only if $n\sigma$ is small, a result which is consistent with the heuristics of Section \ref{sec: dispersal inference setting}. Beyond that scale, as soon as $\sigma \approx n^{-1}$ the situation is a bit more involved. More specifically, when $\sigma \ll n^{-1}$, the offspring traits concentrate around their parents:
we expect roughly to have $n\sigma h$ parental traits in a $\sigma h$-neighbourhood of the trait of a child $Y_{j}=X_{i_j}+\sigma D_{j}$. With overwhelming probability only the true parent trait $X_{i_j}$ of $Y_{j}$ is actually present in this neighbourhood. Then the sum in $i$ over all parents vanishes and we expect the approximation
$$
\sum_{i,j}\psi_2\big((Y_j-X_i)/\sigma\big)\approx\sum_{j}\psi_2(D_{j})
$$
to be valid, while the sum is of order $n\lambda \mu$. More precisely, we expect
\begin{equation} \label{eq: empir dist bis}
\frac{1}{|\mathcal Y|}\sum_{i,j}\psi_2\big((Y_j-X_i)/\sigma\big) \approx \int_{\R}\psi_2(z)f(z)\d z.
\end{equation}
Picking a kernel density estimator with kernel $K$ and bandwidth $h_2>0$, we
obtain an \emph{interaction estimator} of $f(z_0)$ by setting
\begin{equation} \label{eq: esti int}
\widehat{f}_{h_2}^{\tt int}(z_0)=\frac{1}{|\mathcal Y|}\sum_{i,j}\frac{1}{h_{2}}K\Big(\frac{z_0}{h_{2}}-\frac{Y_{j}-X_{i}}{\sigma h_{2}}\Big).
\end{equation}
The representation \eqref{eq: empir dist bis} is recovered with
\begin{equation} \label{eq: def K}
\psi_2 = \frac{1}{h_{2}}K\Big(\frac{z_0}{h_{2}}-\frac{\cdot}{h_{2}}\Big).
\end{equation}
Whenever $n\sigma \gtrsim 1$, the relevance of a procedure like \eqref{eq: esti int} is less obvious. In particular, it is not clear whether the parent traits in the neighbourhood of
an offsping can be used to estimate $f$. For $n\sigma=1$ (and $\lambda=\mu=1$) \citet{brown1970}
constructed an estimator based on an explicit formula that relates
$f$ to the distribution function of the distance of an offspring to its nearest parents. 
An
interaction estimator was also applied by \citet{goldenshluger2016}
in a setting where the intensity measure of $N$ is the Lebesgue measure
on whole real line and which corresponds to $n\sigma=1$. As we will
see below the interaction estimator is still applicable if $\sigma>1/n$
as long as $\sigma$ is not too large. However, the interaction estimator then requires to incorporate a non-trivial kernel $\psi_1$ and a bias correction. 

\subsubsection*{An estimator across scales}

Thanks to the heuristics developed for the construction of $\widehat f_{h_1}^{\tt dec}(z_0)$ and $\widehat{f}_{h_2}^{\tt int}(z_0)$, we are ready to implement an estimator across all scales $\sigma$, when the scale $\sigma$ is known. We consider the case of an unknown $\sigma$ in Section \ref{sec: unknown sigma} below. We first define 
\begin{align*}
\hat{f}_{h_{1},h_{2}}(z_0)=  & \frac{1}{n\lambda\mu\sigma h_{1}}\sum_{i,j}K'\Big(\frac{z_0}{h_{1}}-\frac{Y_{j}}{\sigma h_{1}}\Big)K\Big(\frac{z_0}{h_{2}}-\frac{Y_j-X_{i}}{\sigma h_{2}}\Big),
\end{align*}
where $K$ is a smooth compactly supported kernel with derivative $K'$. We formally retrieve the preceding representation
$\hat f_{h_1,h_2}(z_0)=\frac{h_1h_2}{n\lambda\mu}\sum_{i,j}\psi_1(Y_j)\psi_2\big(\frac{Y_j-X_{i}}{\sigma}\big)$  with $\psi_1$ defined in \eqref{eq: def H} and $\psi_2$ in \eqref{eq: def K}.
Next, we elaborate on the properties we require  for the kernel function $K$:
\begin{assumption} \label{ass: kernel}
The function $K\colon \R\rightarrow \R$ is differentiable, symmetric, bounded and satisfies
$$\mathrm{Supp}(K) \subset [-1,1],\;\;K(z)=1\;\;\text{for}\;|z| \le \tfrac{1}{4},$$
$$\int_{[-1,1]}z^\ell K(z)\d z = \eins_{\{\ell = 0\}}\;\;\text{for}\;\;\ell = 0,\ldots, \ell_K$$
for some $\ell_K \ge 0$ (the order of the kernel $K$).
\end{assumption}
For $\ell_K = 0$ or $1$, which is  generally sufficient in practice, Assumption \ref{ass: kernel} is simply obtained from any suitably dilated and translated compactly supported symmetric (even) density function, see Section \ref{sec: numerical example}. 
Finally we define the appropriate normalisations and bias corrections
that need to be tuned depending on the relevant scale. We proceed as follows:
\begin{definition} \label{def: esti final}
Let $K$ satisfy Assumption \ref{ass: kernel}. We define the following estimators across scales:
\begin{itemize}
\item[(i)] (Deconvolution or large scales.) For $h_{1}\in[(\sigma n)^{-1},1]$ and $h_{2}=8/\sigma$ set
\begin{align}
\hat{f}_{h_{1}}^{(1)}(z_0) & =  \frac{1}{n\lambda h_1}\hat{f}_{h_1,8/\sigma}(z_0)\notag \\
&=\frac{1}{\sigma n\lambda\mu h_{1}^{2}}\sum_{j}K'\Big(\frac{z_0}{h_{1}}-\frac{Y_{j}}{\sigma h_{1}}\Big)\Big(\frac{1}{n\lambda}\sum_iK\Big(\frac{\sigma z_0}{8}-\frac{Y_{j}-X_i}{8}\Big)\Big).\label{eq:fHat1}
\end{align}
\item[(ii)] (interaction or small scales.) Let $\sigma<1/8$, $h_{1}=1/(2\sigma)$. We set for $h_{2}\in(0,1]$:
\begin{align}
\hat{f}_{h_{2}}^{(2)}(z_0) & =\frac{1}{h_2}\hat{f}_{1/(2\sigma),h_2}(z_0)- \sigma n\lambda \notag\\
& =\frac{1}{n\lambda\mu h_{2}}\sum_{i,j} 2K'\big(2(\sigma z_0-Y_{j})\big) K\Big(\frac{z_0}{h_{2}}-\frac{Y_{j}-X_{i}}{\sigma h_{2}}\Big)-\sigma n\lambda.\label{eq:fHat2}
\end{align}
\end{itemize}
\end{definition}
For the deconvolution estimator $\hat f_{h_1}^{(1)}(z_0)$ we could also set $\psi_2=1$. In this case the second factor in the right-hand side of \eqref{eq:fHat1} equals $\frac{|\mathcal X|}{n\lambda}\approx 1$ and we recover $\hat f_{h_1}^{\tt dec}(z_0)$ from above. A similar simplication for $\hat f_{h_2}^{(2)}(z_0)$ is not possible across all scales. As soon as $\sigma n\to\infty$, the small scales estimator crucially benefits from the specific structure of $\psi_1$ which excludes all offspring traits $Y_j$ outside an annulus with radius of order $1/\sigma$, see Proposition~\ref{prop:bias2}\emph{(ii)}   for details.

\subsection{Rates of convergence\label{sec:rates}}
Recall that, given a (small) neighbourhood $U_{z_0}$ of $z_0 \in \R$, the function $f\colon \R \rightarrow \R$ belongs to the local H\"older class $\mathcal H^{s}(z_0)$ with $s >0$
if $f$ is $\lfloor s\rfloor$ times continuously differentiable for every $z,z' \in  U_{z_0}$ and
\begin{equation} \label{def holder}
|f^{(\lfloor s \rfloor)}(z)-f^{(\lfloor s \rfloor)}(z')| \le C|z-z'|^{s - \lfloor s \rfloor},
\end{equation}
where $\lfloor s \rfloor$ is the largest integer {\it stricty smaller} than $s$, and $f^{(n)}$ denotes $n$-fold derivation (with $f^{(0)} = f$).
The definition depends on $U_{z_0}$, further omitted in the notation. We obtain a semi-norm 
$|f|_{\mathcal H^s(z_0)}$
by taking the smallest constant $C$ for which \eqref{def holder} holds.
Moreover, as explained in Section \ref{sec:estimator}, we assume for technical convenience that $f$ is bounded and supported in $[-\frac12,\frac12]$ which yields the following nonparametric class of densities\footnote{We further omit a slight ambiguity: the neighbourhood $U_{z_0}$ in the definition of $|f|_{\mathcal H^s(z_0)}$ is implicitly taken independently of $f$.}
\[
  \mathcal G^s(z_0,L):=\Big\{f\colon\R\to [0,\infty):|f|_{\mathcal H^s(z_0)}\le L,\|f\|_\infty\le L,\mathrm{Supp}(f)\subset[-\tfrac12,\tfrac12],\int f(z)\d z=1\Big\}.
\]
We first exhibit rates of convergence for $\hat{f}_{h_{1}}^{(1)}(z_0) $ and $\hat{f}_{h_{2}}^{(2)}(z_0) $ of Definition \ref{def: esti final} built upon $\hat{f}_{h_{1},h_{2}}(z_0)$. 
\begin{prop}
\label{thm:rates} Let $K$ satisfy Assumption \ref{ass: kernel} with $\ell_K\ge \lfloor s\rfloor$.
 
\begin{enumerate}
\item If $h_1\le 1\le \sigma n$, then we have for  any $z_0\in(-\frac{1}{2},\frac{1}{2})$
\begin{equation}
\sup_{f\in\mathcal G^s(z_0,L)}\E\Big[\big(\hat{f}_{h_{1}}^{(1)}(z_0)-f(z_0)\big)^{2}\Big]^{1/2}\lesssim h_{1}^{s}+\big(n\sigma h_{1}^{3}\big)^{-1/2},\label{eq:rateDecon}
\end{equation}
up to a constant that depends on $L$, $s$, $K$ and $z_0$. Choosing $h_{1}=(n\sigma)^{-1/(2s+3)}$, we obtain the optimised rate 
$$\E\Big[\big(\hat{f}_{h_{1}}^{(1)}(z_0)-f(z_0)\big)^{2}\Big]^{1/2}\lesssim(n\sigma)^{-s/(2s+3)}.$$ 
\item Let $\sigma<1/8$. For any $h_{2}\in(0,1]$ and $z_0\in(-\frac{1}{2},\frac{1}{2})$, we have
\begin{equation}
\sup_{f\in\mathcal G^s(z_0,L)}\E\Big[\big(\hat{f}_{h_{2}}^{(2)}(z_0)-f(z_0)\big)^{2}\Big]^{1/2}\lesssim h_{2}^{s}+\max\big((nh_{2})^{-1/2}, \sigma^{1/2} h_{2}^{-1/2}, n^{1/2}\sigma\big),\label{eq:rateLocal}
\end{equation}
up to a constant that depends on $L$, $s$, $K$ and $z_0$. Choosing $h_{2}=(n\wedge\sigma^{-1})^{-1/(2s+1)}$, we obtain the optimised rate 
$$\E\Big[\big(\hat{f}_{h_{2}}^{(2)}(z_0)-f(z_0)\big)^{2}\Big]^{1/2}\lesssim \max\big( (n\wedge\sigma^{-1})^{-s/(2s+1)}, n^{1/2}\sigma \big).$$
\end{enumerate}
\end{prop}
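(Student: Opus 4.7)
The plan is to carry out a bias-variance decomposition for both estimators, starting from the mean formula coming from Corollary \ref{cor: intensity}. For any test function $\phi$,
$$
\E\Big[\sum_{i,j}\phi(X_{i},Y_{j})\Big]=n^{2}\lambda^{2}\mu\int_0^1\!\!\int_{\R}\phi(x,y)(f_{\sigma}\ast p)(y)\,\d y\,\d x+n\lambda\mu\int_0^1\!\!\int_{\R}\phi(x,y)f_{\sigma}(y-x)\,\d y\,\d x,
$$
so that $\E[\hat{f}_{h_{1},h_{2}}(z_0)]$ naturally splits into a \emph{convolution part} and an \emph{interaction part}. Each estimator is designed to isolate one of them.

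For part (i), I would apply this formula to $\hat f_{h_1}^{(1)}(z_0)$. In the convolution part, Assumption \ref{ass: kernel} and the fact that $\sigma h_1 \le 1$ yield $K\bigl(\frac{\sigma z_0 - (y-x)}{8}\bigr)=1$ uniformly on the effective support of the outer kernel $K'$ when $x\in[0,1]$. Applying the Dutch-type identity \eqref{eq:dutchFormular} with $\supp f\subset[-\tfrac12,\tfrac12]$, only the $\ell=0$ term survives, and the convolution part reduces after an integration by parts (boundary terms vanish by compactness of $K$) to a classical kernel expression $h_1^{-1}\int K(u) f(z_0-h_1 u)\,\d u$. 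Standard H\"older analysis with $K$ of order $\ell_K\ge \lfloor s\rfloor$ yields bias $O(h_1^s)$. The interaction part carries a factor $(n\sigma)^{-1}$, which is negligible under $\sigma n\ge 1$. For the variance, I would differentiate Proposition \ref{prop:expFormula} further to get second-order Campbell-type formulas; the squared estimator splits into diagonal and off-diagonal contributions, and the localisation imposed by $K'((z_0-y/\sigma)/h_1)$ (support of width $\sigma h_1$ in $y$) together with $\|K'\|_\infty\lesssim 1$ gives the leading variance $(n\sigma h_1^3)^{-1}$. Optimising $h_1^{s}\asymp(n\sigma h_1^3)^{-1/2}$ yields $h_1 = (n\sigma)^{-1/(2s+3)}$ and the announced rate.

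For part (ii), the analysis mirrors this, but the bias correction $-\sigma n \lambda$ plays the decisive role. A change of variables and an integration by parts give
$$
\int_0^1 2K'\bigl(2(\sigma z_0 - x - \sigma z)\bigr)\d x = K\bigl(2\sigma(z_0-z)\bigr)-K\bigl(2\sigma(z_0-z)-2\bigr),
$$
and for $\sigma<1/8$ and $z\in\supp f$, the first argument lies in $[-1/4,1/4]$ while the second has modulus $>1$, so by Assumption \ref{ass: kernel} the right-hand side equals $1$. Hence the interaction part collapses to the standard kernel estimator $h_2^{-1}\int K((z_0-z)/h_2)f(z)\,\d z$ with bias $O(h_2^s)$. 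The same manipulation, now applied to the convolution part, produces a term which -- again using $\supp f\subset[-\tfrac12,\tfrac12]$ and the localisation of $\psi_1$ around $y\approx \sigma z_0$ -- exactly equals $\sigma n \lambda$ up to negligible boundary contributions, and is cancelled by the subtraction.

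The hardest step is the variance bound in (ii), because the max over three terms reflects distinct contributions from Campbell moments of the double sum over $(i,j)$. Schematically, the fourth-order cumulants of $(M,N)$ yield (a) a diagonal $(i,j)$ term with scaling $(n h_2)^{-1}$ after using the compact support of $\psi_1$ (concentration around an annulus of width $\sim \sigma^{-1}$ in $y$); (b) a paired-parent term $(i,i',j)$ scaling as $\sigma h_2^{-1}$, reflecting the additional $\sigma$ weight carried by the integrated interaction kernel over the parent support; and (c) a paired-offspring term contributing $n\sigma^2$ from the convolution component that survives after centering. Taking square roots gives the three terms in the max. The combination of Assumption \ref{ass: kernel} (compactness, symmetry, order) and the compact support of $f$ is what keeps the bounds uniform in $f\in\mathcal G^s(z_0,L)$. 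The optimisation $h_2^s\asymp (n\wedge \sigma^{-1})^{-1/2}h_2^{-1/2}$ then produces the stated choice $h_2=(n\wedge\sigma^{-1})^{-1/(2s+1)}$ and rate.
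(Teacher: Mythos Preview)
Your proposal is correct and follows essentially the same route as the paper. The paper packages the argument into three auxiliary results---Propositions~\ref{prop:bias1} and~\ref{prop:bias2} for the two pieces $\mathcal U_\sigma(f\ast p)$ and $\mathcal V_\sigma(f)$ of the mean decomposition~\eqref{eq:exp}, and Proposition~\ref{prop:variance} for a unified variance bound in terms of $\|\psi_1\|_{L^2}$, $\|\psi_2\|_{L^1}$, $\|\psi_2\|_{L^2}$---but your direct bias computations (integration by parts, flatness of $K$ on $[-1/4,1/4]$, the exact cancellation $\mathcal U_\sigma=1/h_1$ in~(ii)) and your decomposition of the variance into diagonal, paired-parent, and paired-offspring contributions (the $J_1,\dots,J_4$ of the paper's proof of Proposition~\ref{prop:variance}) are the same ingredients.
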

Some remarks are in order: {\bf 1)} The rate $(n\sigma)^{-s/(2s+3)}$ in \emph{(i)} reflects the ill-posedness
of degree one due to the convolution with the indicator function.
Moreover, we see that the rate is determined by $n\sigma$ instead
of $n$ solely: the information about $f$ is concentrated at the boundary $[-\frac{\sigma}{2},\frac{\sigma}{2}]\cup[1-\frac{\sigma}{2},1+\frac{\sigma}{2}]$
of the support of the parent distribution since in the interior we
have $\eins_{[0,1]}\ast f_{\sigma}(y)=1$ for all $y\in(\frac{\sigma}{2},1-\frac{\sigma}{2})$.
Since the number of children in this boundary is of order $n\sigma$,
the latter can be understood as effective sample size. In particular,
the convergence rate deteriorates for $\sigma\to0$ and the deconvolution
estimator is only consistent as long as $n\sigma\to\infty$. {\bf 2)} In \emph{(ii) }we obtain the classical rate of convergence $n^{-s/(2s+1)}$
for nonparametric density estimation as long as $\sigma n \lesssim 1$.
For large scaling factors the bias correction $-\sigma n\lambda$
becomes crucial and the rate gets slower, {\it i.e.} the local interaction
between parents and children becomes less informative. We obtain the
rate $\sigma^{s/(2s+1)}\vee(\sqrt n\sigma)$. In particular, the
interaction approach is only consistent as long as $\sigma=o(n^{-1/2})$.
This limitation is a consequence of the non-negligible correlations between two different offspring traits in a $\sigma$-neighborhood of a parent. 
{\bf 3)} Interestingly, there is an intermediate regime $\sigma\in[n^{-1},n^{-1/2}]$
where both approaches are applicable and we can choose the estimator
with the faster rate.\\ 

We wrap together the results of Proposition \ref{thm:rates} to obtain our main result:
\begin{thm} \label{thm: rates main}
Let $s,L>0$ and let $K$ satisfy Assumption \ref{ass: kernel} with $\ell_K\ge \lfloor s\rfloor$. For any $z_0 \in (-1/2,1/2)$, there exists an estimator
$\widehat{f}(z_0)$ depending on $\sigma,\lambda,\mu$ and $s$, explicitly obtained from Proposition \ref{thm:rates} above such that
\[
\sup_{f\in\mathcal G^s(z_0,L)}\E\Big[\big(\widehat{f}(z_0)-f(z_0)\big)^{2}\Big]^{1/2} \lesssim r_{n},
\]
up to a constant that depends on $L$, $s$, $K$ and $z_0$, and
with rate of convergence from \eqref{eq:rate}.
\end{thm}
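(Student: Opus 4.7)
The statement of the theorem is a direct synthesis of the two upper bounds provided by Proposition~\ref{thm:rates}. The plan is to define $\hat{f}(z_0)$ piecewise in $\sigma=\sigma_n$ by picking, on each regime, whichever of the deconvolution estimator $\hat{f}^{(1)}_{h_1}(z_0)$ and the direct estimator $\hat{f}^{(2)}_{h_2}(z_0)$ yields the smaller optimised upper bound; the task then reduces to verifying, by elementary algebra, that the resulting envelope coincides with $r_n$ in \eqref{eq:rate}.

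From Proposition~\ref{thm:rates}, with the optimised bandwidths $h_1=(n\sigma)^{-1/(2s+3)}$ and $h_2=(n\wedge\sigma^{-1})^{-1/(2s+1)}$, the two upper bounds read
\begin{equation*}
\hat{f}^{(1)}\colon (n\sigma)^{-s/(2s+3)}\ \text{for } n\sigma \ge 1,\qquad \hat{f}^{(2)}\colon \max\bigl((n\wedge\sigma^{-1})^{-s/(2s+1)},\, n^{1/2}\sigma\bigr)\ \text{for } \sigma<1/8.
\end{equation*}
First I would treat $\sigma\le n^{-1}$: there $n\wedge\sigma^{-1}=n$ and $n^{1/2}\sigma\le n^{-1/2}\le n^{-s/(2s+1)}$, so $\hat{f}^{(2)}$ delivers the stated rate $n^{-s/(2s+1)}$ of the first case. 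For $\sigma\in(n^{-1},1)$ one has $n\wedge\sigma^{-1}=\sigma^{-1}$; writing $\sigma=n^\alpha$, an elementary computation shows that $\sigma^{s/(2s+1)}\ge n^{1/2}\sigma$ iff $\alpha\le -(2s+1)/(2s+2)$, which identifies the boundary between the second and third cases.

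To separate the third and fourth cases, I would solve $n^{1/2}\sigma=(n\sigma)^{-s/(2s+3)}$: again with $\sigma=n^\alpha$ this reduces to $1/2+\alpha=-s(1+\alpha)/(2s+3)$ and yields the critical exponent $\alpha=-(4s+3)/(6s+6)$. Hence $\hat{f}^{(2)}$ (with rate $n^{1/2}\sigma$) remains preferable on $[n^{-(2s+1)/(2s+2)},\,n^{-(4s+3)/(6s+6)})$, while $\hat{f}^{(1)}$ (with rate $(n\sigma)^{-s/(2s+3)}$) takes over on $[n^{-(4s+3)/(6s+6)},1]$, matching \eqref{eq:rate}. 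Note that the applicability constraints $n\sigma\ge 1$ and $\sigma<1/8$ are automatically satisfied in the asymptotic regimes where the respective estimators are chosen. A short continuity check at each breakpoint confirms that both candidate rates agree at the transition, so the piecewise prescription is unambiguous. No genuine obstacle remains here: all probabilistic estimates have been carried out in Proposition~\ref{thm:rates}, and what is left is pure bookkeeping of the scale axis.
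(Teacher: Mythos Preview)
Your proposal is correct and follows exactly the approach of the paper, which states only that the theorem is an immediate consequence of Proposition~\ref{thm:rates}. You have simply supplied the elementary bookkeeping (identifying the two crossover exponents $-(2s+1)/(2s+2)$ and $-(4s+3)/(6s+6)$) that the paper leaves implicit.
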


Some remarks again: {\bf 1)} The graph of $\log r_n$ as a function of $\tau$ for $\sigma=n^{-\tau}$
is illustrated in Figure~\ref{fig:rates}. Quite surprisingly, the
dependence of the convergence rate on the scaling parameter $\sigma$
is not monotonic which is a consequence of (\ref{eq:exp}): The information
on $f$ in the deconvolution term decreases if $\sigma$ gets smaller,
while the second information based on interaction decreases if
$\sigma$ gets larger. The elbows between the regimes correspond to
the points where
\[
\sigma=n^{-(2s+1)/(2s+2)}\;\;{\it i.e.}\;\;\sqrt{n}\sigma=\sigma^{s/(2s+1)}
\]
and 
\[
\sigma=n^{-(4s+3)/(6s+6)}\;\;{\it i.e.}\;\;\sqrt{n}\sigma=(n\sigma)^{-s/(2s+3)}.
\]
In particular, the best estimator uses the deconvolution approach
if $\sigma>n^{-(4s+3)/(6s+6)}$ and the interaction approach otherwise. 
{\bf 2) }For the construction of the estimator, we need to know $\lambda,\mu$
and $\sigma$. A canonical estimator for $\lambda$ is given by $\hat{\lambda}=n^{-1}|\mathcal X| = n^{-1}M([0,1])\sim n^{-1}\mathrm{Poiss}(\lambda n)$
satisfying $\E[|\hat{\lambda}/\lambda-1|^{2}]=(n\lambda)^{-1}$. 
The scaling parameter $\sigma$ is more critical, because 
even the parametric accuracy is not sufficient to construct an
estimator which is adaptive in $\sigma$: We have to decide
whether $\sigma>n^{-(4s+3)/(6s+6)}$ or not and the boundary $n^{-(4s+3)/(6s+6)}$
is $o(n^{-1/2})$ as soon as $s>2$. 
Since usual construction
principles for adaptive estimators rely on a monotonic dependence
of the rate, more precisely of an upper bound for the stochastic error,
the observed dependence on $\sigma$ might complicate the construction
of an adaptive estimator considerably, see Section~\ref{sec: unknown sigma}.

\subsection{Minimax optimality}\label{sec:lowerBound}

For $\sigma\le1/n$ the rate $n^{-s/(2s+1)}$  is optimal: 
a lower bound is obtained by noting that it is more
informative to observe the point cloud of the parent traits $\mathcal Y$ and the dispersal realisations $(D_{j})$ via a Poisson
point process with intensity $n\lambda\mu f$. The offspring point process $N$
 can subsequently be constructed via uniformly distributing the
children around the parents. Observing $(D_{j})$, the classical minimax
rate for estimating $f(z_0)$ is $n^{-s/(2s+1)}$. Less obviously,  for $n\sigma\to\infty$, the rate $r_n$ is optimal too:

\begin{thm}
\label{thm:lowerBound} Let $z_0 \in(-1/2,1/2)$, $s>0$ and $L \ge L_0 >0$, where $L_0$ is given in the proof.
Suppose $\sigma n|\log\sigma|^{-1}\to\infty$
for $\sigma=\sigma_{n}\in(0,1)$ as $n\to\infty$. We have
\[
\liminf_{n\to\infty}\inf_{\hat{\theta}}\sup_{f\in\mathcal G^s(z_0,L)}r_{n}^{-1}\E_{f}\Big[\big(\hat{\theta}-f(z_0)\big)^{2}\Big]^{1/2}>0,
\]
where the infimum is taken over all estimators $\hat{\text{\ensuremath{\theta}}}$ built upon $\mathcal X$ and $\mathcal Y$, with 
$r_{n}$ given by (\ref{eq:rate}).
\end{thm}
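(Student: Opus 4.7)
The plan is to apply Le Cam's two-point method: for each regime appearing in the rate~\eqref{eq:rate} (only the scales $\sigma>n^{-1}$ are non-trivial in view of the hypothesis $\sigma n/|\log\sigma|\to\infty$), I construct a pair $f_{0},f_{1}\in\mathcal{G}^{s}(z_{0},L)$ with $|f_{0}(z_{0})-f_{1}(z_{0})|\gtrsim r_{n}$ and bounded Kullback--Leibler divergence between the induced joint laws $\mathsf P_{f_{0}},\mathsf P_{f_{1}}$ of $(M,N)$. Since $M$ has an $f$-independent Poisson law, the chain rule and the Poisson KL formula give, with $N\mid M$ Poisson of intensity $\mu(M\ast f_{\sigma,i})$ under $\mathsf P_{f_{i}}$, that a second-order Taylor expansion in $\delta:=f_{0}-f_{1}$, combined with $\E[(M\ast\delta_{\sigma})(y)]=n\lambda(\delta_{\sigma}\ast p)(y)$ and $\Var[(M\ast\delta_{\sigma})(y)]=n\lambda(\delta_{\sigma}^{2}\ast p)(y)$, produces
\[
\mathrm{KL}(\mathsf P_{f_{0}}\|\mathsf P_{f_{1}})\lesssim\mu\int\frac{(\delta_{\sigma}^{2}\ast p)(y)}{(f_{\sigma,0}\ast p)(y)}\,\d y+\mu n\lambda\int\frac{(\delta_{\sigma}\ast p)^{2}(y)}{(f_{\sigma,0}\ast p)(y)}\,\d y.
\]
Anchoring $\delta$ to a reference density $f_{0}$ bounded below on the relevant support reduces this to an \emph{interaction} contribution of order $\mu\sigma^{-1}\|\delta\|_{L^{2}}^{2}$ and a \emph{deconvolution} contribution of order $\mu n\lambda\|\delta_{\sigma}\ast p\|_{L^{2}}^{2}$, mirroring the two information channels highlighted in Section~\ref{sec:estimator}.

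For the extremal regimes $\sigma\in[n^{-1},n^{-(2s+1)/(2s+2)}]$ and $\sigma\in[n^{-(4s+3)/(6s+6)},1]$, a single local bump $\delta(z)=h^{s}\eta((z-z_{0})/h)$ suffices, with $\eta\in C_{c}^{\infty}$ satisfying $\eta(0)\neq 0$ and $\int z^{k}\eta(z)\,\d z=0$ for $0\le k\le\lfloor s\rfloor$. A Plancherel calculation using $|\widehat p(\xi)|^{2}=4\sin^{2}(\xi/2)/\xi^{2}$ together with the vanishing moments of $\eta$ yields $\|\delta_{\sigma}\ast p\|_{L^{2}}^{2}\lesssim\sigma h^{2s+3}$, so $\mathrm{KL}\lesssim\sigma^{-1}h^{2s+1}+n\sigma h^{2s+3}$. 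Balancing each term to $O(1)$ gives $h\asymp\sigma^{1/(2s+1)}$ and $h\asymp(n\sigma)^{-1/(2s+3)}$, with the corresponding separations $\sigma^{s/(2s+1)}$ and $(n\sigma)^{-s/(2s+3)}$. Positivity, normalisation and H\"older regularity of $f_{1}=f_{0}+\delta$ are standard once $f_{0}$ is bounded away from $0$ in a neighbourhood of $z_{0}$.

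The main obstacle is the intermediate regime $\sigma\in[n^{-(2s+1)/(2s+2)},n^{-(4s+3)/(6s+6)}]$, where the target rate $r_{n}=\sqrt{n}\sigma$ does not depend on $s$ and hence cannot arise from balancing a polynomial bias--variance trade-off of the above type: at the critical local scale both KL contributions are simultaneously binding and the resulting separation falls strictly below $\sqrt{n}\sigma$. A dedicated construction is needed, exploiting the specific ill-posedness of convolution with $\eins_{[0,1]}$, whose Fourier symbol $\widehat p(\xi)=(1-e^{-i\xi})/(i\xi)$ vanishes on $2\pi\Z\setminus\{0\}$; a perturbation whose Fourier mass is concentrated near these zeros renders the deconvolution KL contribution subpolynomially small while preserving a non-trivial value at $z_{0}$, and the condition $\sigma n/|\log\sigma|\to\infty$ plausibly encodes the logarithmic cost of this spectral concentration. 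Once the three two-point pairs are in place, the standard Le Cam inequality
\[
\inf_{\widehat\theta}\sup_{f}\E_{f}|\widehat\theta-f(z_{0})|\;\ge\;\tfrac{1}{4}\,|f_{0}(z_{0})-f_{1}(z_{0})|\,\bigl(1-\sqrt{\mathrm{KL}(\mathsf P_{f_{0}}\|\mathsf P_{f_{1}})/2}\bigr)
\]
concludes the proof in each regime.
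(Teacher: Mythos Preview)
Your Le Cam two-point framework and the KL bound
\[
\mathrm{KL}\lesssim\sigma^{-1}h^{2s+1}+n\sigma h^{2s+3}
\]
are essentially correct and match the paper's first analysis; they do deliver the rates $\sigma^{s/(2s+1)}$ and $(n\sigma)^{-s/(2s+3)}$ in the two extremal regimes. Your handling of the random denominator is too breezy, however: saying that ``$f_0$ is bounded below on the relevant support'' does not suffice, because what appears in the conditional $\chi^2$/KL is the \emph{empirical} quantity $(M\ast f_{\sigma,0})(y)$, and a uniform-in-$y$ lower bound for this over the support of $\delta_\sigma$ requires an empirical-process argument (the paper's Lemma~\ref{lem:auxLowerBound}). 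This is precisely where the assumption $\sigma n/|\log\sigma|\to\infty$ enters --- not, as you speculate, as a logarithmic cost of spectral concentration.

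The genuine gap is in the intermediate regime. Your diagnosis that a single bump cannot balance $\sigma^{-1}h^{2s+1}+n\sigma h^{2s+3}$ to yield separation $\sqrt{n}\sigma$ is correct, but your proposed remedy --- concentrating the Fourier mass of $\delta$ near the zeros of $\widehat{p}$ to suppress the deconvolution contribution --- targets the wrong obstacle. For the choice $h=(\sigma\sqrt{n})^{1/s}$ the deconvolution term $n\sigma h^{2s+3}$ is already $O(1)$ throughout the intermediate window; it is the \emph{interaction} term $\sigma^{-1}h^{2s+1}$ that blows up (reaching order $n^{1/3}$ at the right elbow $\sigma=n^{-(4s+3)/(6s+6)}$). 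The paper keeps the \emph{same} perturbation $\delta=\varepsilon h^{s}K'(\cdot/h)$ in all regimes but replaces the crude variance bound behind your interaction term by a sharper analysis: writing the empirical average $\tfrac{1}{n_x}\sum_j K'\bigl((y-X_j)/(\sigma h)\bigr)$ as a Riemann-sum approximation of $\int_0^{1/(\sigma h)}K'(y-x)\,\d x$ plus a remainder controlled via $\|K''\|_{L^1}/(n_x\sigma h)$, one obtains the alternative bound $n\sigma h^{2s+3}+h^{2s}/(n\sigma^{2})$. With $h=(\sigma\sqrt{n})^{1/s}$ the second term equals $1$ identically, and the target separation $h^{s}=\sqrt{n}\sigma$ follows. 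This Riemann-sum step --- exploiting that the perturbation carries one extra derivative --- is the missing key idea; the spectral construction you outline is neither needed nor developed.
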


Theorem \ref{thm:lowerBound} proves that $r_n$ is the minimax rate of convergence in squared pointwise error for nonparametric dispersal estimation. The main difficulty of the proof consists in establishing that the parent locations
indeed become uninformative if $n\sigma\to\infty$, to the effect that ignoring
the data $X_i$ is indeed the best we can do. Our argument is
based on the following heuristics: Given a number of parents $|\mathcal X| \sim\mathrm{Poiss}(\lambda n)$,
the distribution of a child trait $Y$, 
conditional on the parent traits satisfies: 
\begin{align*}
\P(Y \in \d y\,|X_{1},\dots,X_{|\mathcal X|}\big) & = \frac{1}{{|\mathcal X|}}\sum_{i=1}^{{|\mathcal X|}}f_{\sigma}(y-X_{i})\d y \\
&=\int_{0}^{1}f_{\sigma}(y-x)\d x\d y+\Big(\frac{1}{|\mathcal X|}\sum_{i=1}^{|\mathcal X|}f_{\sigma}(y-X_{j})-\E[f_{\sigma}(y-X_{1})]\Big)\d y,
\end{align*}
where, conditional on $|\mathcal X|$,
\[
\Var\Big(\frac{1}{|\mathcal X|}\sum_{i=1}^{|\mathcal X|}f_{\sigma}(y-X_{i})\Big||\mathcal X|\Big)=\frac{1}{|\mathcal X|}\Var(f_{\sigma}(y-X_{1}))\le\frac{1}{|\mathcal X|\sigma}\|f\|_{L^{2}}^{2}.
\]
Since $\E[|\mathcal X|]=\lambda n$, the influence of the parent traits becomes uninformative if $n \sigma\to\infty$. See Section \ref{sec:ProofsMainResults} for a rigorous proof.

\section{The case of an unknown scale parameter $\sigma$} \label{sec: unknown sigma}

In practice, it may well be the case that the scale $\sigma$ is unknown itself. We address this issue in a two-steps strategy: first, we study the estimation of $\sigma$ as a statistical problem in its own right. In particular, we need to distinguish from the data in which regime we stand ($n\sigma \rightarrow \infty$ versus $n\sigma$ bounded). Also, we need an accurate estimator $\widehat \sigma$ of $\sigma$ with respect to the relative error $\widehat \sigma/\sigma-1$ since $\sigma$ itself may vanish as $n \rightarrow \infty$, see Section \ref{sec: esti unknown sigma} below. Second, we use the estimator $\widehat \sigma$ and the associated decision rules to determine the underlying scale to cook-up a $\sigma$-adaptive and final estimator $\widetilde f(z_0)$ by plug-in that proves to be optimal in all regimes (in probability, for simplicity), see Section \ref{sec: esti f plugin} below.
\subsection{Estimation of $\sigma$} \label{sec: esti unknown sigma}
The first question to settle is to decide whether $n\sigma$ is sufficiently large to apply $n\sigma\to\infty$ asymptotics or not. To quantify $n\sigma$ empirically, we define
\[
  \hat T:=N(\R\setminus[0,1]).
\]
Since the support of the offspring point process $N$ is given by $[-\frac{\sigma}{2},1+\frac{\sigma}{2}]$ and the intensity of $N$ is of the order $n$, we expect that $\hat T$ is indeed of order $n\sigma$. 
\begin{lem}\label{lem:EstNSigma}
  Let $\supp f\subseteq[-\frac12,\frac12]$ and $I_f:=\int_\R |x|f(x)\d x$. For any $\sigma\in(0,1]$ and $n\in\N$ we have
  \begin{equation*}
    \E[\hat T]=n\sigma\lambda\mu I_f\qquad\text{and}\qquad \Var(\hat T)\le n\sigma\lambda (\mu+\mu^2).
  \end{equation*}
\end{lem}
In particular, Chebychev's inequality shows that for any $\kappa>0$, we have
\begin{equation}\label{eq:T}
  \P(\hat T\le \kappa)\to\begin{cases}
                         &0,\qquad \text{if } n\sigma\to\infty,\\
                         &1,\qquad \text{if } n\sigma\le \frac{\kappa}{2\lambda\mu I_f}.
                      \end{cases}
\end{equation}

We cannot use $\hat T$ to estimate $\sigma$ since the quantity $I_f$ is unknown, but we can further exploit the support $[-\frac{\sigma}{2},1+\frac{\sigma}{2}]$ of the offspring location traits $Y_{j}$. Namely, we can construct a boundary type estimator for $\sigma$. We focus on the left boundary,
but the method can be easily modified to the right boundary or a combination
of both. For $l\in\{1,\dots,|\mathcal{Y}|\}$ the (non-decreasing) order statistics
are denoted by $Y_{(l)}$. A naive estimator for $\sigma$ is 
$-2Y_{(1)}=-2\min_{j}Y_{j}$. We actually need to improve this estimator by taking
the parent location traits near the left boundary into account. The
resulting estimator is defined as
\begin{align*}
\hat{\sigma}^{(1)} & :=-2Y_{(1)}+2\bar{X}_{(\hat l)}\qquad\text{with}\qquad\bar{X}_{(l)}:=\frac{1}{l}\sum_{j=1}^{l}X_{(j)},\qquad \hat l:= \kappa_n\sqrt{\widehat T},
\end{align*}
for some sequence $\kappa_n\to\infty$.
If $f$
is bounded away from zero on its support $[-\frac{1}{2},\frac{1}{2}]$,
the corresponding c.d.f. $F$ admits at least a linear growth at the boundary.

\begin{prop}
\label{prop:estSigma}Suppose the dispersal density $f$ satisfies
\begin{equation}\label{eq:boundaryCondition}
\inf_{z \in [-\frac{1}{2}, \frac{1}{2}]} f(z) \ge \gamma >0
\end{equation}
for some constant $\gamma>0$. If $\sigma n\to\infty$ and $\hat \sigma^{(1)}$ is specified with $\kappa_{n}\to\infty$ (that can be taken arbitrarily slowly diverging), then
\[
\frac{\hat{\sigma}^{(1)}}{\sigma}-1=O_{\P}\big(\frac{\kappa_{n}}{\sqrt{\sigma n}}\big).
\]
\end{prop}

Equivalently, we have $\hat\sigma^{(1)}-\sigma=O_{\P}(\kappa_n\sqrt{\sigma/n})$. In other words, for constant $\sigma$ we obtain the typical parametric rate, but the error bound is considerably improved if $\sigma\to0$. Most importantly,
the relative estimation error $\frac{\hat{\sigma}^{(1)}-\sigma}{\sigma}$
is small as soon as $\sigma n\to\infty$.\\

In the regime where $\sigma n$ is small, we almost can guess the
relationship $Y_{j}=X_{i_{j}}+\sigma D_{j}$ between an offspring
trait $Y_{j}$ and its parent trait $X_{i_{j}}$ via a nearest neighbour approach. As a consequence, we can
estimate $\sigma$ by a local boundary estimation approach around
the distinct parent traits.  To use this local information, we  pick a kernel function $\psi^\dagger$ with the following properties: for some $C^\dagger >0$, we have
\begin{itemize}
\item[(i)] $\psi^\dagger$ is symmetric with $\mathrm{Supp}(\psi^\dagger) = [-1, 1]$ and $\psi^\dagger(x) = C^\dagger$ for $x \in [-\tfrac{1}{2}, \frac{1}{2}]$,
\item[(ii)] $\psi^\dagger$ is continuous, almost everywhere differentiable and $\tfrac{d}{dx}\psi^\dagger(x) < 0$ on $(\tfrac{1}{2}, 1]$,

\item[(iii)] For some constant $c>0$ and every $\varepsilon \in (0,\tfrac{1}{2})$, we have
$$\int_0^\varepsilon \big(\psi^\dagger(0)-\psi^\dagger(x+\tfrac{1}{2})\big) \d x \ge c\, \frac{\varepsilon}{\log 1/\varepsilon}.$$ 
\end{itemize}
The kernel $\psi^\dagger$ plays the role of a continuous proxy of the indicator function $\eins_{[-1/2,1/2]}$. We specifically may pick
$$\psi^\dagger(x) = C^\dagger \Big(1+\frac{\log 2}{\log(|x|-\tfrac{1}{2})_+}\Big)_+$$
where we set $\tfrac{1}{\log 0}:=0$, with $C^\dagger >0$ such that $\int\psi^\dagger(x) \d x=1$, but other choices (up to a modification of the constants in the estimates) are obviously possible.
 Write
\[
\frac{1}{\mu\lambda n}\sum_{i,j}\psi^\dagger\big((Y_{j}-X_{i})/h\big)=:\E[\psi^\dagger(\sigma D_1/h)]+n\lambda h+\xi(h),
\]
where the stochastic noise term $\xi(h)$ defined via the last display satisfies
\begin{align*}
\E[\xi(h)] & =\E\Big[\frac{1}{\mu\lambda n}\sum_{i,j}\psi^\dagger\big((Y_{j}-X_{i})/h\big)\Big]-\E[\psi^\dagger(\sigma D_1/h)]-n\lambda h\\
 & =O(nh(\sigma+h)),
\end{align*}
as shown in Step~1 in the proof of the technical Lemma~\ref{lem:AuxXi} in Section \ref{sec:ProofsMainResults} below. By Proposition~\ref{prop:variance}
we moreover have for $h\le\sigma$:
\begin{align}
\Var\big(\xi(h)\big) & \lesssim\frac{1}{n}\big((n\sigma+n^{2}\sigma^{2})\frac{h^{2}}{\sigma^{2}}+(n\sigma+1)\frac{h}{\sigma}\big)\nonumber \\
 & \lesssim\frac{h^{2}}{\sigma}+nh^{2}+h+\frac{h}{n\sigma} 
\lesssim nh^{2}+\frac{1}{n}.\label{eq:xiVar}
\end{align}
(The last estimate is obvious for $\sigma \le n^{-1}$, whereas for $\sigma \ge n^{-1}$, we write $h=n^{-1/2} n^{1/2}h\le\frac12(n^{-1}+nh^2)$ and conclude with $\frac{h^2}{\sigma}\le nh^2$.)
Since $h\mapsto\E[\psi^\dagger(\sigma D_1/h)]$ is non-decreasing and equals $\psi^\dagger(0)$ as soon as $h$ reaches $\sigma$ under assumption~\ref{eq:boundaryCondition}, we define for some sequence $\kappa_{n}>0$:
\begin{align*}
\hat{\sigma}^{(2)}:= & \min\Big\{ h>0:\frac{1}{\mu\lambda n}\sum_{i,j}\psi^\dagger\big((Y_{j}-X_{i})/h\big)\ge n\lambda h+\psi^\dagger(0)-\sqrt{nh^{2}+n^{-1}}\kappa_{n}\Big\}.
\end{align*}

\begin{prop}
\label{prop:sigma2}Suppose $n\sigma^{3/2}\to 0$ and \eqref{eq:boundaryCondition}.
Then $\hat\sigma^{(2)}$ specified with $\kappa_n=\sqrt{\log n}$ satisfies
\[
\frac{\hat{\sigma}^{(2)}}{\sigma}-1=
O_{\P}\big((\log n)^2\sqrt{n\sigma^{2}+n^{-1}}\big).
\]
\end{prop}
Note that the condition $n\sigma^{3/2}\to0$ exactly characterises the regime where the rate of $\hat\sigma^{(2)}$ is faster than the rate of $\hat\sigma^{(1)}$. Combining the estimators $\hat\sigma^{(1)}$ and $\hat\sigma^{(2)}$  with the decision rule $\{\hat T>\kappa_n\}$ that enables us to decide whether we are in the regime $n\sigma \rightarrow \infty$ or not, we define our final estimator for $\sigma$ as:
\begin{equation} \label{eq: decision rule esti sigma}
\widehat \sigma :=\left\{
\begin{array}{ll}
\widehat \sigma^{(1)} &  \text{on} \qquad \{\widehat T > \kappa_n\} \cap \{\widehat \sigma^{(1)} > n^{-2/3}/\log n\} \\
\widehat \sigma^{(2)} & \text{otherwise.}
\end{array}
\right.
\end{equation}
Putting together Proposition \ref{prop:estSigma} and \ref{prop:sigma2}, we readily obtain the following rate for estimating $\sigma$ across scales:
\begin{thm} \label{thm:sigma}
  Under the boundary condition \eqref{eq:boundaryCondition} the estimator $\hat\sigma$ defined above with $\kappa_n=\sqrt{\log n}$ satisfies
  \[
   \frac{\hat{\sigma}}{\sigma}-1=O_{\P}\Big((\log n)^2\big(\sqrt{n\sigma^{2}+n^{-1}}\wedge\frac{1}{\sqrt{n\sigma}}\big)\Big).
  \]
 \end{thm}

The performance of $\widehat \sigma$ in terms of its fluctuations in relative error are shown in Figure~\ref{fig: rate sigma}. They will be sufficient to implement an optimal (up to a logarithm) scale adaptive plug-in strategy for the estimation of $f$, as developed in the next section.

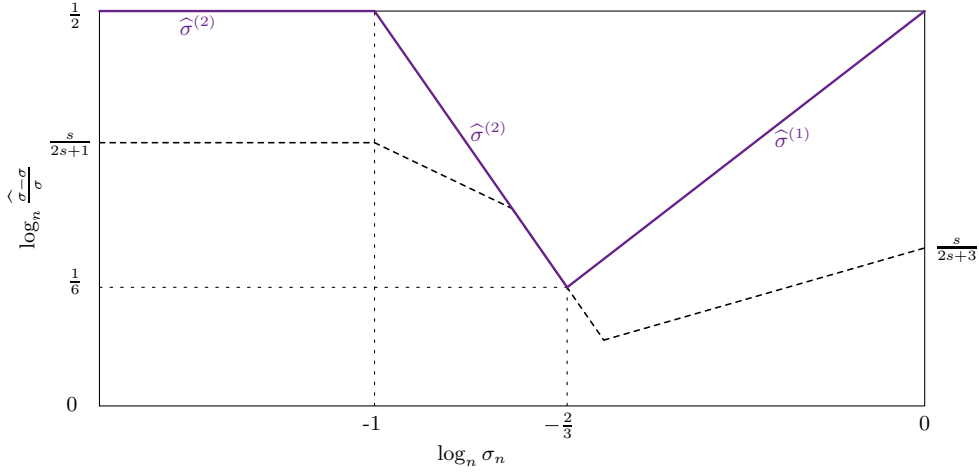
\begin{figure}
\centering{}
\begin{tikzpicture}[x=.5pt,y=.5pt]
\definecolor{fillColor}{RGB}{255,255,255}
\path[use as bounding box,fill=fillColor,fill opacity=0.00] (0,0) rectangle (722.70,361.35);

\begin{scope}
\path[clip] (  0.00,  0.00) rectangle (722.70,361.35);
\definecolor{drawColor}{RGB}{0,0,0}

\path[draw=drawColor,line width= 0.4pt,line join=round,line cap=round] ( 49.20, 49.20) --
	(673.50, 49.20) --
	(673.50,348.15) --
	( 49.20,348.15) --
	( 49.20, 49.20);
\end{scope}

\begin{scope} 
\definecolor{drawColor}{RGB}{0,0,0}

\node[text=drawColor,anchor=base,inner sep=0pt, outer sep=0pt, scale= 0.9] at ( 28.39, 45.35) {0};

\node[text=drawColor,anchor=base,inner sep=0pt, outer sep=0pt, scale= 0.9] at (255.30, 30.60) {-1};

\node[text=drawColor,anchor=base,inner sep=0pt, outer sep=0pt, scale= 0.9] at (673.50, 30.60) {0};

\node[text=drawColor,anchor=base west,inner sep=0pt, outer sep=0pt, scale= 0.9] at ( 25.39,340.94) {$\frac12$};

\node[text=drawColor,anchor=base west,inner sep=0pt, outer sep=0pt, scale= 0.9] at ( 10.02,244.78) {$\frac{s}{2s+1}$};

\node[text=drawColor,anchor=base west,inner sep=0pt, outer sep=0pt, scale= 0.9] at ( 25.39,139) {$\frac{1}{6}$};

\node[text=drawColor,anchor=base west,inner sep=0pt, outer sep=0pt, scale= 0.9] at (680.94,165.06) {$\frac{s}{2s+3}$};

\node[text=drawColor,anchor=base west,inner sep=0pt, outer sep=0pt, scale= 0.9] at (385, 30.14) {$-\frac{2}{3}$};

\node[text=drawColor,anchor=base west,inner sep=0pt, outer sep=0pt, scale= 0.9] at (305, 8) {$\log_n\sigma _n$};
\node[text=drawColor,rotate= 90.00,anchor=base,inner sep=0pt, outer sep=0pt, scale = 0.9] at ( 0,200) { $\log_n \frac{\hat \sigma-\sigma}{\sigma}$};

\end{scope}

\begin{scope}
\path[clip] ( 49.20, 49.20) rectangle (673.50,348.15);
\definecolor{drawColor}{RGB}{0,0,0}

\path[draw=drawColor,line width= 0.6pt, dash pattern=on 2pt off 2pt,line join=round,line cap=round] ( 49.20,248.50) -- (257.30,248.50);

\path[draw=drawColor,line width= 0.6pt,dash pattern=on 2pt off 2pt,line join=round,line cap=round] (361.35,198.67) -- (430.72, 99.03);
	
\path[draw=drawColor,line width= 0.6pt,dash pattern=on 2pt off 2pt,line join=round,line cap=round] (430.72, 99.03) -- (673.50,168.78);

\path[draw=drawColor,line width= 0.6pt,dash pattern=on 2pt off 2pt,line join=round,line cap=round] (257.30,248.50) -- (361.35,198.67);

\path[draw=drawColor,line width= 0.4pt,dash pattern=on 1pt off 3pt ,line join=round,line cap=round] (403, 49.20) -- (403,140);

\path[draw=drawColor,line width= 0.4pt,dash pattern=on 1pt off 3pt ,line join=round,line cap=round] (257.30, 49.20) -- (257.30,348.15);

\path[draw=drawColor,line width= 0.4pt,dash pattern=on 1pt off 3pt ,line join=round,line cap=round] ( 49.20,139) -- (403,139);
%
%
%

\end{scope}
\definecolor{drawColor}{RGB}{104,34,139}
\path[draw=drawColor,line width= 0.9pt ,line join=round,line cap=round] (403, 139) -- (673.50,348.15);

\node[text=drawColor,anchor=base west,inner sep=0pt, outer sep=0pt, scale= 0.9] at (560, 245) {$\hat\sigma^{(1)}$};

\path[draw=drawColor,line width= 0.9pt ,line join=round,line cap=round] (257.30,348.15) -- (403, 139);

\node[text=drawColor,anchor=base west,inner sep=0pt, outer sep=0pt, scale= 0.9] at (330, 250) {$\hat\sigma^{(2)}$};

\path[draw=drawColor,line width= 0.9pt ,line join=round,line cap=round] (257.30,348.15) -- (49.20,348.15);

\node[text=drawColor,anchor=base west,inner sep=0pt, outer sep=0pt, scale= 0.9] at (110, 330) {$\hat\sigma^{(2)}$};

\begin{scope}
 
\end{scope}

\end{tikzpicture}
\caption{The rates of convergence of $\widehat \sigma = \widehat \sigma^{(1)} $ or $\widehat \sigma^{(2)}$ depending on the decision rule \eqref{eq: decision rule esti sigma}, as a function the dispersal rate $\sigma$ on a log-log plot (in solid purple) together with the minimax rate $r_n$ for dispersal density estimation (in dashed black). The purple curve always dominates the black one.} \label{fig: rate sigma}
\end{figure}

\subsection{Estimation of $f$ when $\sigma$ is unknown} \label{sec: esti f plugin}

Recall that the optimal rate of convergence is achieved by the interaction estimator
or the deconvolution estimator depending whether
\[
\sigma\le n^{-(4s+3)/(6s+6)}\qquad\text{or}\qquad\sigma>n^{-(4s+3)/(6s+6)}
\]
respectively. Theorem~\ref{thm:sigma} implies that 
$
\hat{\sigma}n^{(4s+3)/(6s+6)}
=(1+o_{\P}(1))\sigma n^{(4s+3)/(6s+6)}
$
in all regimes for $\sigma$. In turn, we can decide for the best estimator in a data-driven way by setting
$$ 
\tilde{f_n}(z_0) =
\left\{
\begin{array}{ll}
 \tilde{f}_{\hat \sigma}^{(1)}(z_0) &  \text{on} \qquad \big\{\hat{\sigma}n^{(4s+3)/(6s+6)} \ge 1 \big\} \\ \\
 \tilde{f}_{\hat \sigma}^{(2)}(z_0) & \text{otherwise.}
\end{array}
\right.
$$
where $\hat \sigma$ is specified in Theorem~\ref{thm:sigma}, and we use the plug-in counterparts to the deconvolution estimator from \eqref{eq:fHat1} and the interaction estimator from \eqref{eq:fHat2}, respectively, given by
\begin{align*}
\tilde{f}_{\hat \sigma}^{(1)}(z_0) & :=  \frac{1}{\hat \sigma\hat h_1^2 |\mathcal Y|}\sum_{j}K'\Big(\frac{z_0}{\hat h_{1}}-\frac{Y_{j}}{\hat\sigma \hat h_{1}}\Big)\Big(\frac{1}{|\mathcal X|}\sum_iK\Big(\frac{\hat \sigma z_0}{9}-\frac{Y_{j}-X_i}{9}\Big)\Big),
\end{align*}
specified with $\hat h_1=(n\hat\sigma)^{-1/(2s+3)}$,
and
\begin{align*}
\tilde{f}_{\hat \sigma}^{(2)}(z_0) =\frac{1}{|\mathcal Y| \hat h_{2}}\sum_{i,j} 2K'\big(2(\hat\sigma z_0-Y_{j})\big) K\Big(\frac{z_0}{\hat h_{2}}-\frac{Y_{j}-X_{i}}{\hat \sigma \hat h_{2}}\Big)-\hat \sigma |\mathcal X|,
\end{align*}
specified with $\hat h_2= (n\wedge \hat\sigma^{-1})^{-1/(2s+1)}$.

\begin{thm}\label{prop:plugIn} 
Let $s,L>0,z_0\in(-\frac12,\frac12)$ and suppose $K$ fulfills Assumption~\ref{ass: kernel} with order $\ell_K \ge \lfloor s \rfloor$. The following holds uniformly for $f\in\mathcal G^s(z_0,L)$ with property \eqref{eq:boundaryCondition}:
\begin{enumerate}
 \item[(i)] If $n\sigma/(\log n)^2\to\infty$, we have 
\[
\tilde{f}_{\hat \sigma}^{(1)}(z_{0})-f(z_{0})=O_{\P}(u_{n}), \quad\text{with}\;\; u_{n}=(\sigma n)^{-s/(2s+3)}.
\]
 \item[(ii)] If $n\sigma^{2}\to0$ and  $s>3/2$, we have 
\[
\tilde{f}_{\hat \sigma}^{(2)}(z_{0})-f(z_{0})=O_{\P}((\log n)^2v_{n}), \quad \text{with}\;\;v_{n}=(n\wedge \sigma^{-1})^{-s/(2s+1)}+\sqrt n\sigma.
\]
\end{enumerate}
In particular, we achieve $\sigma$-adaptation in the following sense:
$$\tilde{f_n}(z_0)- f(z_0) = O_{\P}((\log n)^2r_n),$$
where $r_n$ is the minimax rate for the estimation of $f(z_0)$ in squared error loss, given in \eqref{eq:rate}, according to Theorems \ref{thm: rates main} and \ref{thm:lowerBound}.
\end{thm}

Our final Theorem \ref{prop:plugIn} shows that under our set of assumptions, it is possible to achieve optimality for the pointwise estimation of the dispersal density $f(z_0)$ across scales without any prior knowledge of the scale $\sigma$. The proof is based upon the study of the smoothness of the interaction and convolution estimators as random processes indexed by $\sigma$ together with sharp estimation rates for  $\hat{\sigma}$ provided by Theorem \ref{thm:sigma}. For technical reason, we have the additional restriction $s >3/2$ for the smoothness of $f$ locally around $z_0$ and our bounds are in probability and not expectation.

\section{Discussion} \label{sec: discussion}

\subsection{One-to-one correspondence between parents and children} \label{sec: exactly one}
In a concrete situation like {\it e.g.} Example~1 in Section \ref{sec: applications},  it is desirable to impose a one-to-one correspondence
between parents and their children. Each parent has exactly one
child and in particular $|\mathcal X| = |\mathcal Y|$. The point process $N$ of the offspring generation
should be modified as follows. For $M=\sum_{i}\delta_{X_{i}}$ as
parent generating process, the offspring of a specific parent trait $X_{i}$ is given by
\begin{equation}
Y_{j}=X_{j}+\sigma D_{j}\label{eq:1to1}
\end{equation}
for independent random variables $(D_{j})_{j \ge 1}$ distributed according to the
dispersal density $f$ and with the scaling parameter $\sigma\in(0,1]$.
The offspring point process is then simply given by 
\[
N(\d y)=\sum_{j}\delta_{Y_{j}}(\d y) = \sum_{j}\delta_{X_{j}+\sigma D_{j}}(\d y).
\]
Let us compare this one-to-one model with the original model of Section \ref{sec: dispersal inference setting}: Here, we have an urn model without replacement and a fixed number $|\mathcal X|$ of draws while in Section~\ref{sec: dispersal inference setting} we have an urn model with replacement and random number of draws. In this
modified case we can proceed analogously to \citet[Proposition~1]{goldenshluger2016}
and we obtain the following counterpart to Proposition~\ref{prop:expFormula}
with $\mu=1$:
\begin{prop}
\label{prop:expFormula-1}Let $(A_{i})_{1 \le i \le I}$ and $(B_{j})_{1 \le j \le J}$
be two families of disjoint Borel subsets of $[0,1]$ and $\R$, respectively.
Then for any $(\eta_{1},\dots,\eta_{I})\in\R^{I}$ and $(\xi_{1},\dots,\xi_{J})\in\R^{J}$
we have
\begin{align*}
 & \log\E\Big[\exp\Big(\sum_{i=1}^{I}\eta_{i}M(A_{i})+\sum_{j=1}^{J}\xi_{j}N(B_{j})\Big)\Big]\\
 & \quad=n\lambda\sum_{i=1}^{I}(e^{\eta_{i}}-1)|A_{i}|+n\lambda\sum_{j=1}^{J}(e^{\xi_{k}}-1)Q_{\sigma}([0,1],B_{j})+n\lambda\sum_{i=1}^{I}\sum_{j=1}^{J}(e^{\eta_{i}}-1)(e^{\xi_{j}}-1)Q_{\sigma}(A_{i},B_{j}),
\end{align*}
where $|A|$ denotes the Lebesgue measure of $A$ and 
$
Q_{\sigma}(A,B)=\int_{A}\int_{B}f_{\sigma}(y-x)\,\d y\,\d x.
$
\end{prop}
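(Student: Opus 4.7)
The plan is to identify the joint point process of parent--child pairs as a Poisson point process on the product space $[0,1]\times\R$, and then apply Campbell's exponential formula directly, following the same scheme as in the proof of Proposition~\ref{prop:expFormula} but in the simpler marked Poisson setting.

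First, I would invoke the marking theorem. Since $M=\sum_j\delta_{X_j}$ is a Poisson point process on $[0,1]$ with intensity $n\lambda\,\d x$, and since, conditional on the $X_j$, the displacements $(D_j)_j$ are i.i.d.\ with density $f$, the bivariate point process
\[
\Pi(\d x,\d y):=\sum_{j}\delta_{(X_{j},\,X_j+\sigma D_j)}(\d x,\d y)=\sum_{j}\delta_{(X_j,Y_j)}(\d x,\d y)
\]
is a Poisson point process on $[0,1]\times\R$ with intensity measure $n\lambda f_{\sigma}(y-x)\,\d x\,\d y$. Indeed, the marked process $\sum_j\delta_{(X_j,D_j)}$ has intensity $n\lambda\,\d x\otimes f(d)\,\d d$, and pushing forward by the diffeomorphism $(x,d)\mapsto(x,x+\sigma d)$ gives the claimed intensity since $\sigma^{-1}f((y-x)/\sigma)=f_\sigma(y-x)$. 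Note that $M$ and $N$ are the two marginal projections of $\Pi$.

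Second, setting
\[
g(x,y):=\sum_{i=1}^{I}\eta_i\eins_{A_i}(x)+\sum_{j=1}^{J}\xi_j\eins_{B_j}(y),
\]
we have $\sum_i\eta_iM(A_i)+\sum_j\xi_jN(B_j)=\int g\,\d\Pi$, so that Campbell's exponential formula yields
\[
\log\E\Big[\exp\Big(\sum_{i}\eta_iM(A_i)+\sum_{j}\xi_jN(B_j)\Big)\Big]=\int_{[0,1]\times\R}\bigl(e^{g(x,y)}-1\bigr)\,n\lambda f_\sigma(y-x)\,\d x\,\d y.
\]
Since the $A_i$ are pairwise disjoint, $\sum_i\eta_i\eins_{A_i}(x)$ takes at most one nonzero value, hence $\exp\bigl(\sum_i\eta_i\eins_{A_i}(x)\bigr)=1+\sum_i(e^{\eta_i}-1)\eins_{A_i}(x)$, and analogously for the $B_j$. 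Multiplying out gives
\[
e^{g(x,y)}-1=\sum_{i}(e^{\eta_i}-1)\eins_{A_i}(x)+\sum_{j}(e^{\xi_j}-1)\eins_{B_j}(y)+\sum_{i,j}(e^{\eta_i}-1)(e^{\xi_j}-1)\eins_{A_i}(x)\eins_{B_j}(y).
\]

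Third, I would integrate each of the three terms against $n\lambda f_\sigma(y-x)\,\d x\,\d y$. The first term produces $n\lambda\sum_i(e^{\eta_i}-1)|A_i|$ because $\int_\R f_\sigma(y-x)\,\d y=1$; the second produces $n\lambda\sum_j(e^{\xi_j}-1)Q_\sigma([0,1],B_j)$ directly from the definition of $Q_\sigma$; the cross term produces $n\lambda\sum_{i,j}(e^{\eta_i}-1)(e^{\xi_j}-1)Q_\sigma(A_i,B_j)$. Summing the three contributions recovers exactly the right-hand side of the proposition. There is no real obstacle here: the construction~\eqref{eq:1to1} makes the pair-process genuinely Poisson on $[0,1]\times\R$, which is precisely why the cross term appears with the simpler bilinear form $(e^{\eta_i}-1)(e^{\xi_j}-1)$ rather than the exponential factor that arose in Proposition~\ref{prop:expFormula} from the Cox construction of $N$. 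The only point that deserves a line of justification is the marking/mapping step identifying $\Pi$ as Poisson with the stated intensity.
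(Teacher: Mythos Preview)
Your argument is correct. The paper does not give its own proof of this proposition but simply refers to \citet[Proposition~1]{goldenshluger2016}; your use of the marking theorem to realise $\Pi=\sum_j\delta_{(X_j,Y_j)}$ as a Poisson point process on $[0,1]\times\R$ with intensity $n\lambda f_\sigma(y-x)\,\d x\,\d y$, followed by Campbell's formula and the disjointness expansion, is exactly the standard route and matches what that reference does.
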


Note that this modified exponential formula coincides with the result
of Proposition~\ref{prop:expFormula} if we apply a first order approximation
of the exponential functions on the right-hand side of (\ref{eq:campbell})
and set $\mu=1$. As a consequence, differentiation yields the same form of the
intensity measure 
\[
\frac{1}{n\lambda}\E\Big[\frac{M(\d x)N(\d y)}{\d x \d y}\Big]= n\lambda(f_{\sigma}\ast\eins_{[-1,1]})(y)+f_{\sigma}(y-x)
\]
as in Proposition \ref{prop:expFormula} while second order properties differ. In fact, higher
order integrals in the one-to-one setting are a bit simpler, see Remark~\ref{rem:OntoOne} below.
We can thus apply exactly the same estimator as before and Theorem~\ref{thm: rates main}
remains true in the one-to-one setting.
\begin{cor}
Let $z_0 \in (-1/2, 1/2)$, and $s,L>0$. Based on the observations $M=\sum_{i}\delta_{X_{i}}$
and $N=\sum_{j}\delta_{Y_{j}} = \sum_{j}\delta_{X_{j}+\sigma D_j}$ with $D_{j}$ from \eqref{eq:1to1}
there is an estimator $\widehat{f}(z_0)$ depending on $\lambda, \sigma$ and
$s$ such that
\[
\sup_{f\in\mathcal G^s(z_0,L)}\E\Big[\big(\widehat{f}(z_0)-f(z_0)\big)^{2}\Big]^{1/2} \lesssim r_{n},
\]
where the rate of convergence is given by (\ref{eq:rate}).
\end{cor}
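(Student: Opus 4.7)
The plan is to mirror the proof of Theorem~\ref{thm: rates main} almost verbatim, using Proposition~\ref{prop:expFormula-1} in place of Proposition~\ref{prop:expFormula}. First, I would observe that differentiating the log-Laplace functional once in each of $\eta_i$ and $\xi_j$ and evaluating at zero gives the intensity
$$\E[M(\d x)N(\d y)]/(\d x\d y)=n\lambda\bigl(n\lambda(f_\sigma\ast\eins_{[0,1]})(y)+f_\sigma(y-x)\bigr),$$
which coincides with that of the Cox model at $\mu=1$. Since both $\hat f_{h_1}^{(1)}(z_0)$ and $\hat f_{h_2}^{(2)}(z_0)$ are constructed so that their expectations only depend on this first-order intensity, the bias analysis carried out in the proof of Proposition~\ref{thm:rates} transfers to the one-to-one setting without modification, yielding the same deterministic error contributions $h_1^s$ in the large-scale regime and $h_2^s$ plus the correction $\sigma n\lambda$ in the small-scale regime.

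Next I would recompute the variance of $\hat f_{h_1,h_2}(z_0)$ by expanding $\E\bigl[(\sum_{i,j}\varphi(X_i,Y_j))^2\bigr]$ into a sum over four-tuples $(i,i',j,j')$ according to coincidences, so that each term is an integral of a tensor product of kernels against a factorial moment measure of $(M,N)$. These factorial moments are obtained by differentiating Proposition~\ref{prop:expFormula-1} at most twice in $\eta$ and twice in $\xi$. The key structural remark is that in Proposition~\ref{prop:expFormula-1} the $(\eta,\xi)$-coupling is \emph{bilinear}, through $(e^{\eta_i}-1)(e^{\xi_j}-1)Q_\sigma(A_i,B_j)$, whereas in Proposition~\ref{prop:expFormula} the $\xi$-variables appear inside a nested exponential; consequently the one-to-one factorial moments are sums of strictly fewer terms, each controlled by the same $Q_\sigma$-integrals that already feature in the original proof. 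The variance bounds underlying Proposition~\ref{thm:rates} therefore carry over, and optimising the bandwidths exactly as before produces the rate $r_n$ of \eqref{eq:rate}.

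For the matching lower bound I would adapt Theorem~\ref{thm:lowerBound}. The regime $\sigma\le 1/n$ is even easier than in the Cox setting, since the dispersion marks $(D_j)_{j\le|\mathcal X|}$ are here \emph{exactly} an i.i.d.\ sample from $f$ of Poisson size $n\lambda$, so the classical $n^{-s/(2s+1)}$ minimax bound for pointwise density estimation applies directly via a two-hypothesis test against a perturbation supported in a $h$-neighbourhood of $z_0$. For $\sigma n/|\log\sigma|\to\infty$, the decomposition of $\P(Y_j\in\d y\mid M)$ and the conditional variance bound $\Var\bigl(|\mathcal X|^{-1}\sum_i f_\sigma(y-X_i)\,\big|\,|\mathcal X|\bigr)\le(|\mathcal X|\sigma)^{-1}\|f\|_{L^2}^2$ used in Section~\ref{sec:lowerBound} depend only on $\mathcal X$, so they are unaffected by switching from a Cox to a one-to-one offspring mechanism; the Le~Cam two-hypothesis argument then reduces identically to a convolution problem with effective sample size of order $n\sigma$.

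The main obstacle will be the variance bookkeeping in the second step: although Remark~\ref{rem:OntoOne} assures that the one-to-one factorial moments are ``a bit simpler'', one still has to verify that the dominant scaling on each coincidence pattern matches that of the Cox model, so that the resulting upper bound is exactly $r_n$. I do not expect any regime to degrade, since the one-to-one construction removes precisely the Poisson compounding that inflates variance in the Cox model.
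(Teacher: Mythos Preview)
Your upper-bound argument is correct and coincides with the paper's approach: the paper does not give a separate proof of the Corollary but simply observes that Proposition~\ref{prop:expFormula-1} yields the same first-order intensity (hence identical bias via Propositions~\ref{prop:bias1} and~\ref{prop:bias2}), and that by Remark~\ref{rem:OntoOne} the higher-order formulas of Lemma~\ref{lem:higherOrder} survive with $Q_\sigma^2(A,\cdot,\cdot)\equiv 0$, so the variance bound of Proposition~\ref{prop:variance} holds a fortiori. Your expansion over coincidence patterns and your remark that the bilinear coupling in Proposition~\ref{prop:expFormula-1} produces strictly fewer terms is exactly the content of Remark~\ref{rem:OntoOne}.

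However, your third paragraph on the lower bound is superfluous: the Corollary as stated asserts only the \emph{existence of an estimator} achieving $\lesssim r_n$, i.e.\ an upper bound. No minimax lower bound is claimed in the one-to-one model, so the adaptation of Theorem~\ref{thm:lowerBound} you sketch---while plausible---is not part of what needs to be proved here.
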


\subsection{The multidimensional case with arbitrary parent distributions} \label{sec: other parent trait distrib}
We briefly investigate two essential extensions of our approach and the results of Theorem \ref{thm: rates main}:
\begin{enumerate}
\item[\bf 1)] How will the rate change in the deconvolution regime if we consider
multidimensional observations, {\it i.e.} when $\mathcal X \subset \mathcal O \subset \R^{d}$ and $\mathcal Y \subset \R^{d}$ with $d> 1?$
\item[\bf 2)] How does the parent distribution affect the problem when
$p$ is not uniform over $\mathcal O$?
\end{enumerate}

For the first question, we argue that  Theorem \ref{thm: rates main} and \ref{thm:lowerBound} generalise to a parent point process in $\R^{d}$ with intensity
measure $\lambda\eins_{\mathcal O}$ for a rectangular set $\mathcal O\subset\R^{d}$.
In general, the smoothing properties of a convolution with $p=|\mathcal O|^{-1}\eins_{\mathcal O}$
for a bounded set $\mathcal O\subset\R^{d}$ considerably depends on the
geometry of $\mathcal O$ and its boundary $\partial \mathcal O$ in particular, see {\it e.g.} \citet{Randol1969}. More specifically,
a more regular boundary results in a faster decay of the characteristic
function of the uniform distribution on $\mathcal O$. As a consequence, the statistical
deconvolution problems depends on the geometry, too.
To investigate the impact of the regularity properties of the parent distribution $p$,
we assume in this section that the characteristic function
of the parent distribution is bounded away from zero. In this case
the classical spectral approach is applicable and allows for a transparent
analysis of statistical estimation, even in the multidimensional case $d>1$.\\ 

Let $M=\sum_{i}\delta_{X_{i}}(\d x)$ be Poisson point process 
with intensity $\lambda n p(x)\d x$ on $\R^{d}$, where 
$p\colon\R^{d}\to [0,\infty)$ is a bounded probability density function. As before the point process $N$ on $\R^d$ of offspring traits has conditional intensity $\mu (M\ast f_\sigma(y))\d y$ with $f_\sigma$ from \eqref{eq: def informal dispersal}. The decomposition \eqref{eq:exp} generalises to
\begin{equation}
\frac{1}{n\lambda \mu}\E\Big[\int_{\mathcal O \times \R^d}\psi_1(y)\psi_2\big((y-x)/\sigma\big)M(\d x)N(\d y)\Big]
  =\sigma^d n\lambda\, \bar{\mathcal U}_\sigma(f \ast p)+ \bar{\mathcal V}_\sigma(f),\label{eq:DecompGen}
\end{equation}
with
$$\bar{\mathcal U}_\sigma(f \ast p) = \int_{\R^d}\psi_1(y)\big(\psi_2\ast p(\sigma\cdot)\big)(y/\sigma)(f_{\sigma}\ast p)(y)\d y$$
and
$$\bar{\mathcal V}_\sigma(f)=  \int_{\R^d}\big(\psi_1\ast p(-\cdot)\big)(\sigma z)\psi_2(z)f(z)\d z.$$

To deconvolve $f_\sigma\ast p$ in $\bar{\mathcal U}_\sigma(f\ast p)$, we denote the characteristic function of $p$ by $\phi_{p}(u)=\F[p](u)=\int_{\R^d} e^{iu^\top x}p(x)\d x$, $u\in\R^d$, and assume that $\phi_p(u)\neq 0$ for all $u\in\R^d$. Then we can choose the spectral deconvolution kernel
\[
  \psi_1=\F^{-1}\Big[\frac{\F K(h_1u)}{\phi_p(u/\sigma)}\Big](z_0-\cdot/\sigma),\qquad z_0\in\R^d,
\]
with inverse Fourier transform $\F^{-1}[h(u)](x)=(2\pi)^{-d}\int_{\R^d}e^{-iu^\top x}h(u)\d u$ for any $h\in L^1(\R^d)$ and where $K\colon\R^d\to \R$ is a band limited kernel with bandwidth $h_1>0$. Plancherel's identity and $\F[(f\ast p)(\sigma\cdot)](u)=\sigma^{-d}\F[f](u)\phi_p(u/\sigma)$ indeed yields
\begin{align}
  \int_{\R^d}\psi_1(z)(f_\sigma\ast p)(z)\d z&=\frac{\sigma^d}{(2\pi)^d}\int_{\R^d}e^{-iu^\top z}\frac{\F K(h_1u)}{\phi_p(u/\sigma)}\F\big[(f\ast p(\sigma\cdot))\big](u)\d u\notag\\
  &=\frac{1}{(2\pi)^d}\int_{\R^d}e^{-iu^\top z}\F[K_{h_1}](u)\F[f](u)\d u
  =(K_{h_1}\ast f)(z).\label{eq:MeanSD}
\end{align}
Using $\psi_1$ from above and $\psi_2=1$, we define the following \emph{spectral deconvolution estimator} on $\R^d$:
$$\widehat f_{h_1}^{\tt sd}(z_0) = \frac{1}{n\lambda\mu}\sum_j\F^{-1}\Big[\frac{\F K(h_1u)}{\phi_p(u/\sigma)}\Big]\Big(z_0-\frac{Y_j}{\sigma}\Big).$$

If the parent distribution is unknown, then we can profit from the observations $\mathcal X$ by replacing $\phi_{p}$ with its empirical counterpart $\hat{\phi}_{p}(u)=|\mathcal X|^{-1}\sum_{j}e^{iX_{j}^{\top}u}$
as demonstrated in the classical (univariate) deconvolution literature,
see {\it e.g.} \citet{Neumann2007,ComteLacour2011,DattnerEtAl2016}. 

The rate of convergence will be determined by the decay of $\phi_{p}(u)$.
Note that $\phi_{p}(u)$  should decay at least as $|u|^{-d}$ in order to allow for
a bounded density $h=\F^{-1}\phi_{p}$. In the multivariate case, the extensions $\mathcal H_d^{s}(z_0)$ and $\mathcal G_d^{s}(z_0,L)$ of the local H\"older classes  $\mathcal H^{s}(z_0)$ and the class of H\"older regular, bounded densities $\mathcal G^s(z_0,L)$, respectively, from the univariate case to $\R^d$ is straightfroward by considering partial derivatives. A kernel $K$ of order $\ell_K\ge\lfloor s\rfloor$ in dimension $d$ can be constructed, for instance by tensorisation of the one dimensional case. We keep-up with the notation $|\cdot|$ to denote the Euclidean norm on $\R^d$.
\begin{thm} \label{thm: d>1 case}
Let $z\in\R^{d}$ and $f\in \mathcal G_{d}^{s}(z_0,L)$ for some $s>0$ and let $p$ be a bounded probability density on $\R^d$ with $\phi_p(u)\neq 0$ for all $u\in\R^d$. If $K$
is a kernel or order $\ell_K\ge\lfloor s\rfloor$ that satisfies $\supp\F K\subset\{u\in\R^{d}:|u|\le1\}$, then we have 
\[
{\sup_{f\in \mathcal G_{d}^{s}(z_0,L)}}\E\Big[\big|\widehat{f}_{h_1}^{\rm sp}(z_0)-f(z_0)\big|^{2}\Big]^{1/2}\lesssim h^{s}+\frac{\sigma^{d}}{n^{1/2}}\Big(\int_{\{u\in\R^{d}:|u|\le1/(\sigma h)\}}|\phi_{p}(u)|^{-2}du\Big)^{1/2}.
\]
In the mildly ill-posed case with $|\phi_{p}(u)|\gtrsim(1+|u|^{2})^{-t/2}$
for some $t \ge d$, we obtain
\[
{\sup_{f\in \mathcal G_{d}^{s}(z_0,L)}}\E\Big[\big|\hat{f}_{h_1}^{\rm sp}(z_0)-f(z_0)\big|^{2}\Big]\lesssim(n\sigma^{2t-d})^{-2s/(2s+2t+d)}.
\]
 for the choice $h_1=(n\sigma^{2t-d})^{1/(2s+2t+d)}$.
 
In the severely ill-posed case $|\phi_{p}(u)|\gtrsim e^{-\gamma|u|^{\beta}}$ 
for some $\gamma,\beta>0$, we obtain
\[
{\sup_{f\in \mathcal G_{d}^{s}(z_0,L)}}\E\Big[\big|\hat{f}_{h_1}^{(d)}(z_0)-f(z)\big|^{2}\Big]^{1/2}\lesssim\sigma^{-s}(\log n)^{-s/\beta}
\]
for the choice $h_1=\sigma^{-1}(\frac{1}{4\gamma}\log n)^{-1/\beta}$.
\end{thm}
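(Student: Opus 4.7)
My plan is a bias--variance decomposition for the linear estimator
$\widehat f^{\rm sp}_{h_1}(z_0) = (n\lambda\mu)^{-1}\sum_j \tilde\psi_1(Y_j)$ with $\tilde\psi_1(y) := \F^{-1}\big[\F K(h_1\cdot)/\phi_p(\cdot/\sigma)\big](z_0-y/\sigma)$. The starting point for the bias is the Cox structure of $N$: since $\E[M(\d x)]=n\lambda p(x)\d x$ and $N$ has conditional intensity $\mu(M\ast f_\sigma)$, we have $\E[N(\d y)]=n\lambda\mu(p\ast f_\sigma)(y)\d y$, so $\E[\widehat f^{\rm sp}_{h_1}(z_0)] = \int \tilde\psi_1(y)(p\ast f_\sigma)(y)\d y$. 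The Plancherel computation \eqref{eq:MeanSD} already displayed in the paper identifies this expectation with $(K_{h_1}\ast f)(z_0)$, so a standard Taylor argument, using the $\ell_K\ge\lfloor s\rfloor$ moment conditions on $K$ together with $f\in\mathcal G^s_d(z_0,L)$, yields $|\E\widehat f^{\rm sp}_{h_1}(z_0)-f(z_0)|\lesssim h_1^s$.

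The variance is the main computational step. I would use the total-variance formula for the Cox process $N$ with random intensity $\Lambda(\d y)=\mu(M\ast f_\sigma)(y)\d y$:
\[
\Var\Big(\int g\,\d N\Big)=\E\Big[\int g^2\,\d\Lambda\Big]+\Var\Big(\int g\,\d\Lambda\Big).
\]
The first term equals $n\lambda\mu\int g^2(p\ast f_\sigma)\d y\lesssim n\|g\|_{L^2}^2$ since $\|p\ast f_\sigma\|_\infty\le\|p\|_\infty$. For the second, I rewrite $\int g\,\d\Lambda=\mu\sum_i(g\ast\check f_\sigma)(X_i)$ with $\check f_\sigma(\cdot):=f_\sigma(-\cdot)$; Campbell's formula for the Poisson process $M$ bounds its variance by $\lesssim n\|g\ast\check f_\sigma\|_{L^2}^2$, which Young's inequality reduces to $\lesssim n\|g\|_{L^2}^2$ thanks to $\|\check f_\sigma\|_{L^1}=1$. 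Plugging $g=\tilde\psi_1$, substituting $v=u/\sigma$ in the Fourier variable and applying Plancherel together with $\supp\F K\subset\{|u|\le1\}$ gives
\[
\|\tilde\psi_1\|_{L^2}^2 \lesssim \sigma^{2d}\int_{\{|v|\le 1/(\sigma h_1)\}}|\phi_p(v)|^{-2}\d v.
\]
Dividing by $(n\lambda\mu)^2$ and combining with the bias bound yields the first assertion.

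The two specialisations then follow by plugging in the decay of $|\phi_p|$. In the mildly ill-posed case, $\int_{\{|v|\le R\}}(1+|v|^2)^t\d v\asymp R^{2t+d}$ with $R=1/(\sigma h_1)$ produces a variance of order $\sigma^{d-2t}/(nh_1^{2t+d})$, and the usual bias--variance balance with $h_1^{2s}$ yields the announced $h_1$ and rate. In the severely ill-posed case, $\int_{\{|v|\le R\}}e^{2\gamma|v|^\beta}\d v\lesssim R^d e^{2\gamma R^\beta}$, and the proposed $h_1=\sigma^{-1}(\log n/(4\gamma))^{-1/\beta}$ makes $e^{\gamma(\sigma h_1)^{-\beta}}=n^{1/4}$, so the variance is dominated by the bias $h_1^s=\sigma^{-s}(\log n)^{-s/\beta}$.

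The main obstacle is the Cox variance decomposition, which forces us to match the two sources of randomness -- the external Poisson noise of $M$ and the internal Poisson noise of $N$ given $M$ -- to the same Sobolev-type quantity $\|\tilde\psi_1\|_{L^2}^2$. Young's inequality together with $\|f_\sigma\|_{L^1}=1$ is the clean device that makes this possible. Some secondary care is required with the Fourier change of variables $v=u/\sigma$, whose Jacobian contributes the extra factor $\sigma^{2d}$ that distinguishes the present deconvolution rate from the classical nonparametric density-estimation rate.
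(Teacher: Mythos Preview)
Your proposal is correct and follows essentially the same route as the paper: the bias is handled exactly via \eqref{eq:MeanSD}, and the variance is split into the same two terms $J_1$ (diagonal) and $J_2$ (off-diagonal), each bounded by a constant times $n\|\tilde\psi_1\|_{L^2}^2$, followed by the identical Plancherel computation and specialisations. The only cosmetic difference is that you obtain $J_1+J_2$ from the law of total variance for the Cox process $N$ together with Campbell's formula for $M$, whereas the paper reads off the same decomposition from the precomputed second-moment identity \eqref{eq:BB}; the resulting bounds coincide.
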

Several remarks are in order: {\bf 1)}
for $d=1$, the uniform distribution corresponds to the degree of ill-posedness
$t=1$ for which we indeed recover the rate $(n\sigma)^{2s/(2s+3)}$. {\bf 2)}
For more regular distributions
with $t>1$ the dependence of the deconvolution rate on the scaling
parameter $\sigma$ is even more severe. For $t=\frac32d$ the deconvolution
estimator is only consistent if $n\sigma^{2d}\to\infty.$ Since the analysis of the variance of an interaction approach in the general setting reveals a term of order $n\sigma^{2d}$, {\it cf.} Remark~\ref{rem:VarianceGeneral}, we conjecture that there is a regime where $f$ cannot be estimated consistently if $t>\frac32d$. 

To discuss the behaviour of an interaction estimator similar to \eqref{eq: esti int}, we first note that our variance estimates in Section~\ref{sec:Bias-and-variance}
can be generalised to arbitrary parent distributions
with bounded densities and to higher dimensions, see in particular Remark~\ref{rem:VarianceGeneral} at the end of Section \ref{sec:Bias-and-variance}. A soon as the bias due to $\bar{\mathcal U_\sigma(f\ast p)}$ in the interaction regime can be controlled, one can in principle build an estimator $\widehat f_h^{\tt{int}}(z_0)$ with mean squared-error of order
\[
\E\Big[\big(\widehat f_h^{\tt{int}}(z_0)-f(z_0)\big)^{2}\Big]^{1/2}\lesssim h^{s}+\max\Big(\frac{1}{n^{1/2}h^{d/2}},\frac{\sigma^{d/2}}{h^{d/2}}, n^{1/2}\sigma^{d}\Big)
\]
for $f\in \mathcal G_{d}^{s}(z_0,L)$. An optimised choice of $h=(n\wedge\sigma^{-d})^{-1/(2s+d)}$ then yields
$$\E\Big[\big(\widehat f_h^{\tt{int}}(z_0)-f(z_0)\big)^{2}\Big]^{1/2}\lesssim \max\big((n\wedge\sigma^{-d})^{-s/(2s+d)}, n^{1/2}\sigma^{d}\big).$$
However, the analysis of the bias due to $\bar{\mathcal U_\sigma(f\ast p)}$ is quite delicate and we do not have a clear understanding of its behaviour at the moment.  Note also that the analysis of the interaction estimator is applicable
to a generating parent trait point process with intensity $\lambda n\eins_{\mathcal O}(x)\d x$ for any Borel
set $\mathcal O \subset\R^{d}$ without additional difficulties.

\section{A numerical example} \label{sec: numerical example}
In order to illustrate the main results, we will apply the estimators from Definition~\ref{def: esti final} together with the pure deconvolution estimator and the interaction estimator  from \eqref{eq: def H}  and \eqref{eq: def K}, respectively, on simulated observations. 

We choose $n=1000$, $\lambda=\mu=1$ and consider the $\mathrm{Beta}(2,3)$-distribution (shifted by $-1/2$) for the dispersal, {\it i.e.} 
\[ f(z)= \frac{1}{12}\Big(\frac12+z\Big)\Big(\frac12-z\Big)^2\eins_{[-1/2,1/2]}(z),\qquad z\in\R.\]
For the estimators we pick the kernel
\[
    K(z):=\begin{cases}
           1,\qquad & |z|\le \frac14,\\
           \big(\big(\frac{32}{15}(|z|-\frac14)\big)^2-1\big)^2,\qquad& |z|\in(\frac14,\frac{23}{32}],\\
           0,&\text{otherwise},
        \end{cases}
\]
which is continuously differentiable, non-negative and satisfies Assumption~\ref{ass: kernel} with order $\ell_K=1$. The bandwidths are chosen as $h_1=0.7(n\sigma)^{-1/7}$ and $h_2=0.7\min(n,\sigma^{-1})^{-1/5}$ according to Proposition~\ref{thm:rates}. In the numerical experiments we note a considerable sensitivity of the small scale estimator $\hat f_{h_2}^{(2)}$ to the choice of $h_1$. While the estimator achieves the optimal rate with $h_1=1/(2\sigma)$, the proofs reveal that the conditions $h_1+h_2<\sigma^{-1}$ and $h_1\ge 4$ are sufficient for the bias analysis and the variance grows by the factor $(\sigma h_1)^{-1}$. Hence, $h_1$ should be as large as possible and we choose $h_1=\max(4,\sigma^{-1}-1.1\,h_2)$ for $\hat f_{h_2}^{(2)}$.

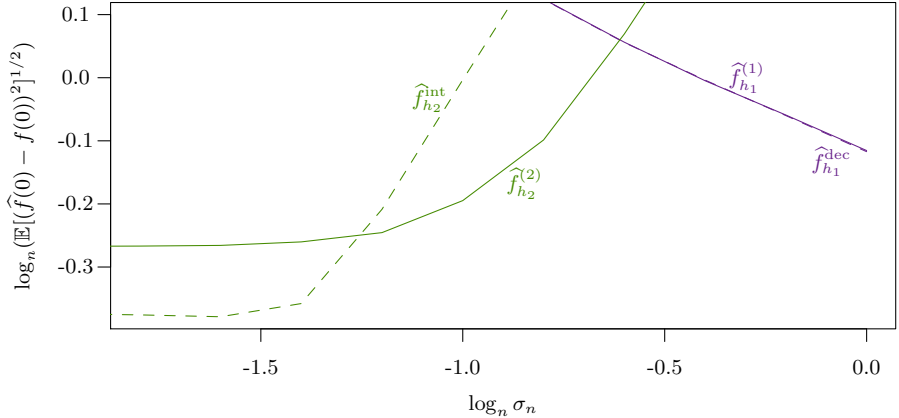
\begin{figure}\centering
\begin{tikzpicture}[x=.8pt,y=.8pt]
\definecolor{fillColor}{RGB}{255,255,255}
\path[use as bounding box,fill=fillColor,fill opacity=0.00] (0,0) rectangle (433.62,216.81);
\begin{scope}
\path[clip] ( 49.20, 49.20) rectangle (420.42,203.61);
\definecolor{drawColor}{RGB}{104,34,139}

\path[draw=drawColor,line width= 0.4pt,line join=round,line cap=round] (211.07,216.81) --
	(215.71,213.60) --
	(253.91,205.22) --
	(292.10,184.84) --
	(330.29,166.61) --
	(368.48,150.37) --
	(406.67,133.56);
\end{scope}
\begin{scope}
\path[clip] (  0.00,  0.00) rectangle (433.62,216.81);
\definecolor{drawColor}{RGB}{0,0,0}

\path[draw=drawColor,line width= 0.4pt,line join=round,line cap=round] (120.24, 49.20) -- (406.67, 49.20);

\path[draw=drawColor,line width= 0.4pt,line join=round,line cap=round] (120.24, 49.20) -- (120.24, 43.20);

\path[draw=drawColor,line width= 0.4pt,line join=round,line cap=round] (215.71, 49.20) -- (215.71, 43.20);

\path[draw=drawColor,line width= 0.4pt,line join=round,line cap=round] (311.19, 49.20) -- (311.19, 43.20);

\path[draw=drawColor,line width= 0.4pt,line join=round,line cap=round] (406.67, 49.20) -- (406.67, 43.20);

\node[text=drawColor,anchor=base,inner sep=0pt, outer sep=0pt, scale = 0.9] at (120.24, 27.60) {-1.5};

\node[text=drawColor,anchor=base,inner sep=0pt, outer sep=0pt, scale = 0.9] at (215.71, 27.60) {-1.0};

\node[text=drawColor,anchor=base,inner sep=0pt, outer sep=0pt, scale = 0.9] at (311.19, 27.60) {-0.5};

\node[text=drawColor,anchor=base,inner sep=0pt, outer sep=0pt, scale = 0.9] at (406.67, 27.60) {0.0};

\path[draw=drawColor,line width= 0.4pt,line join=round,line cap=round] ( 49.20, 78.45) -- ( 49.20,197.89);

\path[draw=drawColor,line width= 0.4pt,line join=round,line cap=round] ( 49.20, 78.45) -- ( 43.20, 78.45);

\path[draw=drawColor,line width= 0.4pt,line join=round,line cap=round] ( 49.20,108.31) -- ( 43.20,108.31);

\path[draw=drawColor,line width= 0.4pt,line join=round,line cap=round] ( 49.20,138.17) -- ( 43.20,138.17);

\path[draw=drawColor,line width= 0.4pt,line join=round,line cap=round] ( 49.20,168.03) -- ( 43.20,168.03);

\path[draw=drawColor,line width= 0.4pt,line join=round,line cap=round] ( 49.20,197.89) -- ( 43.20,197.89);

\node[text=drawColor,anchor=base,inner sep=0pt, outer sep=0pt, scale = 0.9] at ( 32.80, 75.45) {-0.3};

\node[text=drawColor,,anchor=base,inner sep=0pt, outer sep=0pt, scale = 0.9] at ( 32.80,105.31) {-0.2};

\node[text=drawColor,anchor=base,inner sep=0pt, outer sep=0pt, scale = 0.9] at ( 32.80,135.17) {-0.1};

\node[text=drawColor,anchor=base,inner sep=0pt, outer sep=0pt, scale = 0.9] at ( 32.80,165.03) {0.0};

\node[text=drawColor,anchor=base,inner sep=0pt, outer sep=0pt, scale = 0.9] at ( 32.80,194.89) {0.1};

\path[draw=drawColor,line width= 0.4pt,line join=round,line cap=round] ( 49.20, 49.20) --
	(420.42, 49.20) --
	(420.42,203.61) --
	( 49.20,203.61) --
	( 49.20, 49.20);
\end{scope}
\begin{scope}
\path[clip] (  0.00,  0.00) rectangle (433.62,216.81);
\definecolor{drawColor}{RGB}{0,0,0}

\node[text=drawColor,anchor=base,inner sep=0pt, outer sep=0pt, scale = 0.9] at (234.81,  10.60) {$\log_n\sigma_n$};

\node[text=drawColor,rotate= 90.00,anchor=base,inner sep=0pt, outer sep=0pt, scale = 0.9] at ( 10.80,126.41) { $\log_n(\E[(\hat f(0)-f(0))^2]^{1/2})$};

\end{scope}
\begin{scope}
\path[clip] ( 49.20, 49.20) rectangle (420.42,203.61);
\definecolor{drawColor}{RGB}{104,34,139}

\path[draw=drawColor,line width= 0.4pt,dash pattern=on 4pt off 4pt ,line join=round,line cap=round] ( 24.76,188.31) --
	( 35.71,216.81);
\node[text=drawColor,anchor=base,inner sep=0pt, outer sep=0pt, scale = 0.9] at ( 390,125) { $\hat f_{h_1}^{\mathrm{dec}}$};

\path[draw=drawColor,line width= 0.4pt,dash pattern=on 4pt off 4pt ,line join=round,line cap=round] (211.48,216.81) --
	(215.71,213.96) --
	(253.91,205.09) --
	(292.10,184.69) --
	(330.29,166.88) --
	(368.48,150.12) --
	(406.67,133.04);
\node[text=drawColor,anchor=base,inner sep=0pt, outer sep=0pt, scale = 0.9] at ( 350,165) { $\hat f_{h_1}^{(1)}$};

\definecolor{drawColor}{RGB}{69,139,0}

\path[draw=drawColor,line width= 0.4pt,line join=round,line cap=round] ( 24.76, 88.25) --
	( 62.95, 88.36) --
	(101.14, 88.68) --
	(139.33, 90.34) --
	(177.52, 94.73) --
	(215.71,109.88) --
	(253.91,138.56) --
	(292.10,188.71) --
	(310.65,216.81);
\node[text=drawColor,anchor=base,inner sep=0pt, outer sep=0pt, scale = 0.9] at ( 245,115) { $\hat f_{h_2}^{(2)}$};

\path[draw=drawColor,line width= 0.4pt,dash pattern=on 4pt off 4pt ,line join=round,line cap=round] ( 24.76, 56.37) --
	( 62.95, 55.80) --
	(101.14, 54.92) --
	(139.33, 61.16) --
	(177.52,105.76) --
	(215.71,167.03) --
	(247.32,216.81);
\node[text=drawColor,anchor=base,inner sep=0pt, outer sep=0pt, scale = 0.9] at ( 200,155) { $\hat f_{h_2}^{\mathrm{int}}$};
\end{scope}
\end{tikzpicture}
  \caption{Logarithmic plot of the root mean squared error of $\hat f_{h_1}^{(1)}$ (\emph{purple}), $\hat f_{h_2}^{(2)}$ (\emph{green}) as well as $\hat f_{h_1}^{\rm dec}$ (\emph{purple, dashed}), $\hat f_{h_2}^{\rm int}$ (\emph{green, dashed}) at $z_0=0$ depending on $\sigma_n$ based on a Monte Carlo simulation.}\label{fig:MC}
\end{figure}

A Monte Carlo simulation confirms our theoretical findings. Figure~\ref{fig:MC} shows the root mean squared error at point $z_0=0$ based on a Monte Carlo simulation with 500 repeated samples. In each of these iterations the same random variables are drawn to define the point clouds $\mathcal X$ and $\mathcal Y$ along a grid of scaling parameters $\sigma_n=n^{\tau}$, $\tau\in\{-2,-1.8,\dots,-0.2,0\}$. We see that $\hat f_{h_1}^{(1)}$ is much better for large scales, but its error increases as $\sigma$ decreases. For $\sigma< n^{-0.6}\approx 0.016$ the direct estimator $\hat f_{h_2}^{(2)}$ is better and its error improves when $\sigma$ decreases further. As we can see in this figure $\hat f_{h_1}^{(1)}$ and $\hat f^{\rm dec}_{h_1}$ behave similarly. In contrast, there is a notable difference between $\hat f_{h_2}^{(2)}$ and $\hat f_{h_2}^{\rm int}$. More precisely, our numerical experiments indicate that the stochastic error of $\hat f_{h_2}^{\rm int}$ is smaller across all scales, but even before $\sigma=n^{-1}$ the bias effect drastically kicks in.

\begin{figure}
  \input{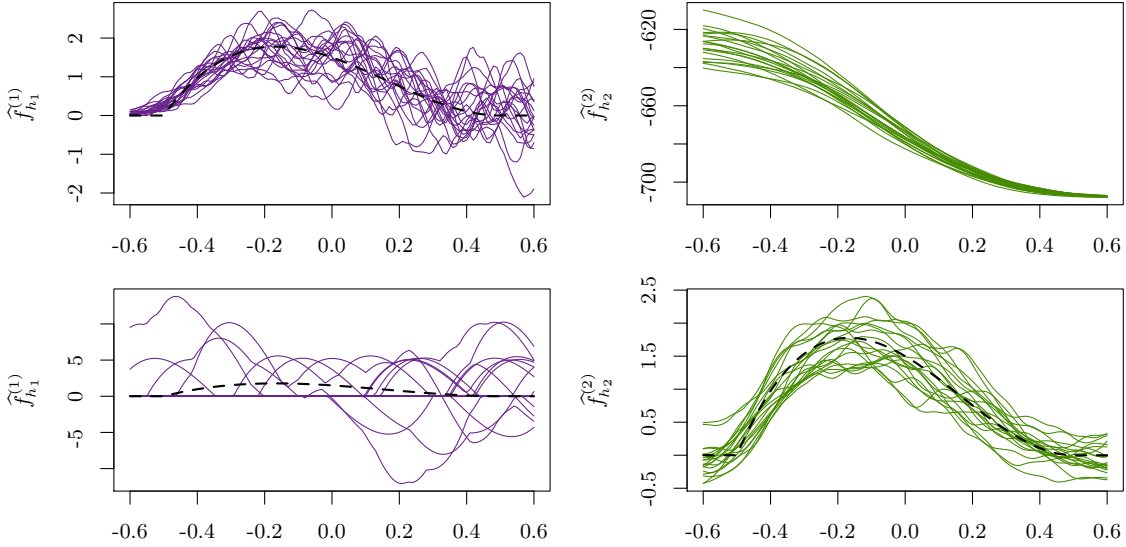}
  \caption{20 realisations of $\hat f_{h_1}^{(1)}$ (\emph{left}) and  $\hat f_{h_2}^{(2)}$ (\emph{right}) and true density (\emph{dashed}) in the point process model with parameters $n=1000$, $\lambda=\mu=1$ and scaling parameters $\sigma=n^{-1/20}\approx0.71$ (\emph{top}) and $\sigma=n^{-19/20}\approx0.0014$ (\emph{bottom}).}\label{fig:Estimates}
\end{figure}

For a more detailed impression on the behaviour of our main estimators Figure~\ref{fig:Estimates} shows 20 realisations of the estimators $\hat f_{h_1}^{(1)}$ and $\hat f_{h_2}^{(2)}$ for two different choices the scaling parameter $\sigma\in\{n^{-1/20},n^{-19/20}\}$. While the deconvolution estimator $\hat f_{h_1}^{(1)}$ fluctuates around the true density for the rather large scale $\sigma=n^{-1/20}\approx0.71$, its result is completely useless at the small scale $\sigma=n^{-19/20}\approx0.0014$. The direct estimator $\hat f_{h_2}^{(2)}$ reveals the opposite behaviour. At the small scale $\hat f_{h_2}^{(2)}$ concentrates around $f$, but its shape is unrelated to the dispersal density if $\sigma$ is large.

\section{Proofs\label{sec:Proofs}} 

\subsection{The covariance structure of $(M,N)$} \label{sec: proof campbell}

\begin{proof}[Proof of Proposition~\ref{prop:expFormula}]
Conditional on $M$ we obtain from Campbell's formula
\begin{align*}
 & \E\Big[\exp\Big(\sum_{j=1}^{J}\eta_{j}M(A_{j})+\sum_{k=1}^{K}\xi_{k}N(B_{k})\Big)\Big|(X_{j})\Big]\\
 & \quad=\exp\Big(\sum_{j=1}^{J}\eta_{j}M(A_{j})\Big)\E\Big[\exp\Big(\sum_{k=1}^{K}\xi_{k}N(B_{k})\Big)\Big|(X_{j})\Big]\\
 & \quad=\exp\Big(\sum_{j=1}^{J}\eta_{j}M(A_{j})\Big)\exp\Big(\int(e^{\sum_{k}\xi_{k}\eins_{B_{k}}(y)}-1)\mu\sum_{l}f_{\sigma}(y-X_{l})\d y\Big)\\
 & \quad=\exp\Big(\sum_{j=1}^{J}\eta_{j}M(A_{j})+\sum_{k=1}^{K}\int_{B_{k}}(e^{\xi_{k}}-1)\mu\sum_{l}f_{\sigma}(y-X_{l})\d y\Big)
 =\exp\Big(\int g\d M\Big)
\end{align*}
for 
\[
g(x):=\sum_{j=1}^{J}\eta_{j}\eins_{A_{j}}(x)+h(x),\qquad h(x):=\mu\sum_{k=1}^{K}(e^{\xi_{k}}-1)\int_{B_{k}}f_{\sigma}(y-x)\d y.
\]
Applying again Campbell's formula yields
\[
\log\E\Big[\exp\Big(\sum_{j=1}^{J}\eta_{j}M(A_{j})+\sum_{k=1}^{K}\xi_{k}N(B_{k})\Big)\Big]=n\lambda\int_{0}^{1}\big(e^{g(x)}-1\big)\d x.
\]
Pluging-in $g(x)$, we obtain
\begin{align*}
&n\lambda\int_{0}^{1}\big(e^{g(x)}-1\big)\d x  =n\lambda\sum_{j=1}^{J}\int_{A_{j}}\big(e^{\eta_{j}+h(x)}-1\big)\d x+n\lambda\int_{(\bigcup_{j}A_{j})^{c}}\big(e^{h(x)}-1)\d x\\
 &\qquad =n\lambda\sum_{j=1}^{J}(e^{\eta_{j}}-1)\int_{A_{j}}(e^{h(x)}-1)\d x+n\lambda\sum_{j=1}^{J}(e^{\eta_{j}}-1)|A_{j}|
 +n\lambda\int_{0}^{1}\big(e^{h(x)}-1)\d x.\qedhere
\end{align*}
\end{proof}

\begin{proof}[Proof of Corollary \ref{cor: intensity}]
It suffices to note that
\begin{align*}
\Psi(\eta,\xi):=&\E\Big[e^{\eta M(A)+\xi N(B)}\Big]\\
=&\exp\Big(n\lambda(e^{\eta}-1)|A|+n\lambda\int_{0}^{1}\big(e^{\psi_{B}(\xi,x)}-1)\d x+n\lambda(e^{\eta}-1)\int_{A}(e^{\psi_{B}(\xi,x)}-1)\d x\Big),
\end{align*}
where 
$
\psi_{B}(\xi,x):=\mu(e^{\xi}-1)\int_{B}f_{\sigma}(y-x)\d y
$
satisfies
\[
\partial_{\xi}\Psi(\eta,\xi)=\Psi(\eta,\xi)\Big(n\lambda\int_{0}^{1}e^{\psi_{B}(\xi,x)}\partial_{\xi}\psi_{B}(\xi,x)\d x+n\lambda(e^{\eta}-1)\int_{A}e^{\psi_{B}(\xi,x)}\partial_{\xi}\psi_{B}(\xi,x)\d x\Big)
\]
and
\begin{align*}
\partial_{\eta}\partial_{\xi}\Psi(\eta,\xi) & =\Psi(\eta,\xi)\Big(n\lambda e^{\eta}|A|+n\lambda e^{\eta}\int_{A}(e^{\psi_{B}(\xi,x)}-1)\d x\Big)\\
 & \qquad\times\Big(n\lambda\int_{0}^{1}e^{\psi_{B}(\xi,x)}\partial_{\xi}\psi_{B}(\xi,x)\d x+n\lambda(e^{\eta}-1)\int_{A}e^{\psi_{B}(\xi,x)}\partial_{\xi}\psi_{B}(\xi,x)\d x\Big)\\
 & \quad+\Psi(\eta,\xi)n\lambda e^{\eta}\int_{A}e^{\psi_{B}(\xi,x)}\partial_{\xi}\psi_{B}(\xi,x)\d x.
\end{align*}
Hence, due to $\Psi(0,0)=1$, $\psi_{B}(0,x)=0$ and $\partial_{\xi}\psi_{B}(0,x)=\mu\int_{B}f_{\sigma}(y-x)\d y$,
the claimed formula is given by  $\partial_\eta\partial_\xi\Psi(0,0)$.\end{proof}

The previous proof also shows that
\begin{equation}\label{eq:ExpN}
  \E[N(B)] =n\lambda\mu\int_{0}^{1}\int_{B}f_{\sigma}(y-x)\,\d y\d x,\qquad B\subset \R,
\end{equation}
by calculating $\partial_\xi\Psi(0,0)$. While Corollary~\ref{cor: intensity} determines the mean of linear functionals of $M$ and $N$ the following lemma investigates the covariance structure. 

\begin{lem}
\label{lem:higherOrder}For 
\[
Q_{\sigma}(A,B):=\int_{A}\int_{B}f_{\sigma}(y-x)\,\d y\,\d x,\; Q_{\sigma}^{2}(A,B_{1},B_{2}):=\int_{A}\int_{B_{1}}\int_{B_{2}}f_{\sigma}(y_{1}-x)f_{\sigma}(y_{2}-x)\,\d y_{2}\,\d y_{1}\,\d x
\]
 we have:
\begin{enumerate}
\item If $B_1,B_2\subset\R$ are intervals such that $B_1\cap B_2=\emptyset$, then
\begin{align}
  \E[N(B_1)N(B_2)]&=n^{2}\lambda^{2}Q_\sigma([0,1],B_1)Q_\sigma([0,1],B_2)
  +n\lambda\mu^2 Q_\sigma^2([0,1],B_1,B_2)\notag\\
  &=\E[N(B_1)]\E[N(B_2)]+n\lambda\mu^2 Q_\sigma^2([0,1],B_1,B_2)\label{eq:BB}.
\end{align}
\item If $A_{1},A_{2}\subset[0,1]$ and $B\subset\R$ are intervals such
that $A_{1}\cap A_{2}=\emptyset$, then
\begin{align}
\E[M(A_{1})M(A_{2})N(B)] & =n^{3}\lambda^{3}\mu|A_{1}||A_{2}|Q_{\sigma}([0,1],B)+n^{2}\lambda^{2}\mu|A_{2}|Q_{\sigma}(A_{1},B)\notag\\
&\qquad+n^{2}\lambda^{2}\mu|A_{1}|Q_{\sigma}(A_{2},B)\label{eq:AAB}
\end{align}
\item If $A\subset[0,1]$ and $B_{1},B_{2}\subset\R$ are intervals such
that $B_{1}\cap B_{2}=\emptyset$, then
\begin{align}
&\E[M(A)N(B_{1})N(B_{2})] \label{eq:ABB}\\
&=n^{3}\lambda^{3}\mu^{2}|A|Q_{\sigma}([0,1],B_{1})Q_{\sigma}([0,1],B_{2})\nonumber\\
 & \quad+n^{2}\lambda^{2}\mu^{2}Q_{\sigma}(A,B_{1})Q_{\sigma}([0,1],B_{2})+n^{2}\lambda^{2}\mu^{2}Q_{\sigma}([0,1],B_{1})Q_{\sigma}(A,B_{2})\nonumber \\
 & \quad+n^{2}\lambda^{2}\mu^{2}|A_{1}|Q_{\sigma}^{2}([0,1],B_{1},B_{2})+n\lambda\mu^{2}Q_{\sigma}^{2}(A,B_{1},B_{2}).\nonumber 
\end{align}
\item For $A_{1},A_{2}\subset[0,1]$ and $B_{1},B_{2}\subset\R$ with $A_{1}\cap A_{2}=\emptyset$
and $B_{1}\cap B_{2}=\emptyset$ we have
\begin{align*}
 & \E[M(A_{1})M(A_{2})N(B_{1})N(B_{2})]\\
 & =\E[M(A_{1})N(B_{1})]\E[M(A_{2})N(B_{2})]\\
 & \quad+n^{3}\lambda^{3}\mu^{2}\Big(|A_{1}|Q_{\sigma}(A_{2},B_{1})Q_{\sigma}([0,1],B_{2})+|A_{2}|Q_{\sigma}([0,1],B_{1})Q_{\sigma}(A_{1},B_{2})\\
 &\qquad\qquad\qquad+|A_{1}||A_{2}|Q_{\sigma}^{2}([0,1],B_{1},B_{2})\Big)\\
 & \quad+n^{3}\lambda^{2}\mu^{2}\Big(Q_{\sigma}(A_{2},B_{1})Q_{\sigma}(A_{1},B_{2})+|A_{1}|Q_{\sigma}^{2}(A_{2},B_{1},B_{2})+|A_{2}|Q_{\sigma}^{2}(A_{1},B_{1},B_{2})\Big).
\end{align*}
\end{enumerate}
\end{lem}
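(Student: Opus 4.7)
The plan is to apply the moment-cumulant formula
\[
\E[X_1\cdots X_k]=\sum_{\pi\in\mathcal P(\{1,\dots,k\})}\prod_{C\in\pi}\kappa\big(X_\ell:\ell\in C\big)
\]
for $X_\ell\in\{M(A_i),N(B_j)\}$, combined with the fact that the joint cumulants $\kappa(X_1,\dots,X_k)$ are read off as mixed partial derivatives at the origin of the log-MGF $L(\eta,\xi)$ in \eqref{eq:campbell}. I would first catalogue the cumulants of the relevant increments, then enumerate in each of (i)--(iv) which set-partitions contribute.

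Setting $h(x,\xi):=\mu\sum_j(e^{\xi_j}-1)\int_{B_j}f_\sigma(y-x)\d y$, inspection of \eqref{eq:campbell} shows that the first summand of $L$ is a sum of single-$\eta_i$ functions, the second is $\eta$-independent, and the third is diagonal in $\eta_i$. Since no summand of $L$ is bilinear in two distinct $\eta_i$ (the $A_i$ being disjoint), all joint cumulants involving two or more different $M(A_i)$ vanish. On the other hand $\partial_{\xi_j}h|_0=\mu\int_{B_j}f_\sigma(y-\bull)\d y$ and cross-derivatives of $h$ in distinct $\xi_j$ vanish, so by Faà di Bruno $\partial_{\xi_{j_1}}\cdots\partial_{\xi_{j_k}}e^{h}|_0=\prod_\ell\partial_{\xi_{j_\ell}}h|_0$. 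Differentiating the second and third summands of $L$ accordingly yields
\begin{align*}
\kappa(M(A_i))&=n\lambda|A_i|,\qquad\kappa(N(B_j))=n\lambda\mu Q_\sigma([0,1],B_j),\\
\kappa(M(A_i),N(B_j))&=n\lambda\mu Q_\sigma(A_i,B_j),\\
\kappa(N(B_{j_1}),N(B_{j_2}))&=n\lambda\mu^2 Q_\sigma^2([0,1],B_{j_1},B_{j_2}),\\
\kappa(M(A_i),N(B_{j_1}),N(B_{j_2}))&=n\lambda\mu^2 Q_\sigma^2(A_i,B_{j_1},B_{j_2}),
\end{align*}
all other cumulants of the increments in play being zero.

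With this short table, (i) follows from the two partitions of $\{1,2\}$; (iii) uses all five partitions of $\{1,2,3\}$ since only one index corresponds to an $M$-increment; (ii) uses only the three partitions of $\{1,2,3\}$ that separate indices $1$ and $2$, the remaining two dropping out because they would require a cumulant of two distinct $M$-increments. Case (iv) is the main combinatorial burden: of the fifteen partitions of $\{1,2,3,4\}$, the six whose blocks contain both $M$-indices drop out, leaving nine survivors. I would then identify $\E[M(A_1)N(B_1)]\E[M(A_2)N(B_2)]$, which by Corollary~\ref{cor: intensity} expands into four terms, with the contributions coming from the all-singleton partition, the pairing $\{\{1,3\},\{2,4\}\}$, and the two ``one pair with two singletons'' partitions $\{\{1,3\},\{2\},\{4\}\}$ and $\{\{2,4\},\{1\},\{3\}\}$; the remaining five surviving partitions group exactly into the two batches announced in (iv), once one keeps careful track of the resulting power of $n$ (one per block) and of $\mu$ (one per $N$-index). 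This last bookkeeping step is the only non-mechanical part of the argument.
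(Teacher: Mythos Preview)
Your approach is correct and is essentially the paper's own proof reorganised: the paper differentiates the MGF $\Psi=\exp L$ directly and evaluates at zero, whereas you differentiate $L$ (giving the joint cumulants) and then reassemble moments via the moment--cumulant formula. Since these two computations are related by Fa\`a di Bruno, the content is the same; your cumulant table and the observation that any cumulant involving two distinct $M$-increments vanishes are exactly what drive the paper's cancellations, just made explicit.

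One small slip in part (iv): the number of set-partitions of $\{1,2,3,4\}$ in which indices $1$ and $2$ share a block is $B_3=5$, not six, so \emph{ten} partitions survive. Four of them match the four terms of $\E[M(A_1)N(B_1)]\E[M(A_2)N(B_2)]$ exactly as you describe, leaving \emph{six} (not five) residual partitions; these split as three terms of order $n^3\lambda^3\mu^2$ (partitions $\{\{1,4\},\{2\},\{3\}\}$, $\{\{2,3\},\{1\},\{4\}\}$, $\{\{3,4\},\{1\},\{2\}\}$) and three of order $n^2\lambda^2\mu^2$ (partitions $\{\{1,4\},\{2,3\}\}$, $\{\{1,3,4\},\{2\}\}$, $\{\{2,3,4\},\{1\}\}$), precisely the two batches stated. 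Incidentally, the printed prefactor $n^3\lambda^2\mu^2$ on the second batch in the lemma is itself a typo for $n^2\lambda^2\mu^2$, which your computation would correctly produce.
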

The proof is similiar to the proof of Corollary~\ref{cor: intensity}, but using fourth order partial derivatives \eqref{eq:campbell}. We postpone the details to the appendix.
\begin{rem}
\label{rem:OntoOne}A modified proof shows that in the one-to-one
case from Section~\ref{sec: exactly one} the formulas in Lemma~\ref{lem:higherOrder} remain true if we set $Q_{\sigma}^{2}(A,B_1,B_2)=0$ for all Borel sets $A,B_1,B_2\subset\R$.
\end{rem}

\subsection{Bias and variance bounds\label{sec:Bias-and-variance}}

The two integrals $\mathcal U_\sigma(f\ast p)$ and $\mathcal V_\sigma(f)$
from \eqref{eq:exp} are analysed in the following two propositions.
\begin{prop}
\label{prop:bias1} For $s,L>0$ let $f\in\mathcal G^s(z_0,L)$. Suppose that $\psi_1$ and $\psi_2$ are given by \eqref{eq: def H} and \eqref{eq: def K}, respectively, with $K$ satisfying Assumption~\ref{ass: kernel} with $\ell_K \ge \lfloor s \rfloor$.
\begin{enumerate}
\item If $\sigma h_{2}\ge8$
for $\sigma\in(0,1]$, then we have for all $z_0 \in(-1/2,1/2)$, and
$h_{1}\in(0,\frac{h_{2}}{8}]$
\[
\mathcal U_\sigma(f\ast p)=\frac{1}{\sigma h_{2}}f(z_0)+O\big(\frac{h_{1}^{s}}{\sigma h_{2}}\big).
\]
\item If $h_{1}\in [4,\sigma^{-1})$
for $\sigma\in(0,1/4)$, then we have for all $z_0\in(-1/2,1/2)$, and
$h_{2}\in(0,\min\{1,\sigma^{-1}-h_1\}]$
\[
\mathcal U_\sigma(f\ast p)=\frac{1}{h_{1}}.
\]
\end{enumerate}
\end{prop}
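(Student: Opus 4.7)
For Part~(i), my plan is to exploit the two constraints $\sigma h_2 \ge 8$ and $h_1 \le h_2/8$ to show that the convolution factor $(\psi_2 \ast \eins_{[0,1/\sigma]})(y/\sigma)$ is \emph{constant} on $\supp \psi_1$. A change of variable $w = (z_0 - u)/h_2$ rewrites this convolution as $\int_{a(y)}^{b(y)} K(w)\,dw$ with $a(y) = (\sigma z_0 - y)/(\sigma h_2)$ and $b(y) - a(y) = 1/(\sigma h_2) \le 1/8$; for $y \in [\sigma z_0 - \sigma h_1, \sigma z_0 + \sigma h_1]$ we have $|a(y)| \le h_1/h_2 \le 1/8$, so $a(y), b(y) \in [-1/4, 1/4]$ and the kernel property $K \equiv 1$ on $[-1/4, 1/4]$ yields the constant value $1/(\sigma h_2)$. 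This reduces the claim to analysing $\frac{1}{\sigma h_2}\int \psi_1(y)(f_\sigma \ast p)(y)\,dy$.

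Writing $\psi_1 = \Phi'$ with $\Phi(y) = -\frac{1}{h_1}K((\sigma z_0 - y)/(\sigma h_1))$ and using $(f_\sigma \ast p)'(y) = f_\sigma(y) - f_\sigma(y-1)$, integration by parts followed by the substitution $u = (y - \sigma z_0)/(\sigma h_1)$ gives
\[
\int \psi_1(y)(f_\sigma \ast p)(y)\,dy = \int K(u)\bigl[f(z_0 + h_1 u) - f(z_0 + h_1 u - 1/\sigma)\bigr]\,du.
\]
The second integrand vanishes since $z_0 + h_1 u - 1/\sigma$ lies below $-1/2$ and hence outside $\supp f$ (using $z_0 \in (-1/2,1/2)$ and the parameter constraints). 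For the first integrand, a standard Taylor expansion of $f$ at $z_0$ combined with the moment conditions of $K$ (order $\ell_K \ge \lfloor s \rfloor$) and the local Hölder regularity produces $f(z_0) + O(h_1^s)$, completing Part~(i).

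For Part~(ii) the strategy is more delicate. The key identity is to view the product $(\psi_2 \ast \eins_{[0,1/\sigma]})(y/\sigma)\,(f_\sigma \ast p)(y)$ as
\[
\int\!\!\int \psi_2(u) f(v)\,\eins_{y \in [\sigma\max(u,v),\,\sigma\min(u,v)+1]}\,du\,dv,
\]
exchange the order of integration, and evaluate the inner $y$-integral using the antiderivative of $\psi_1$, obtaining
\[
\int_{\sigma\max(u,v)}^{\sigma\min(u,v)+1}\!\psi_1(y)\,dy = \frac{1}{h_1}\Bigl[K\Bigl(\tfrac{z_0 - \max(u,v)}{h_1}\Bigr) - K\Bigl(\tfrac{z_0 - \min(u,v) - 1/\sigma}{h_1}\Bigr)\Bigr].
\]
The second kernel term vanishes for $(u,v)$ in the relevant supports because $h_1 + h_2 \le 1/\sigma$ forces its argument below $-1$. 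Splitting the $(u,v)$-integral along $\{u \ge v\}$ and $\{u < v\}$ collapses the remaining expression into $\frac{1}{h_1}\int K((z_0-u)/h_1)(\Psi_2 F)'(u)\,du$, where $\Psi_2$ and $F$ are the CDFs of $\psi_2$ and $f$.

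The final step uses that $h_1 \ge 4$ implies $h_1/4 \ge 2 + |z_0|$, so $K((z_0-u)/h_1) = 1$ on $\supp(\Psi_2 F)' \subset [\max(-\tfrac12, z_0-h_2),\, \max(\tfrac12, z_0+h_2)]$; hence the integral collapses to $\int(\Psi_2 F)'(u)\,du = [\Psi_2 F]_{-\infty}^{\infty} = 1$, giving $\mathcal U_\sigma(f \ast p) = 1/h_1$ exactly. The main obstacle is the careful support bookkeeping in Part~(ii): one must verify that all the shifted-by-$1/\sigma$ terms vanish on the effective integration domain, and that $K$ is genuinely identically~$1$ across $\supp(\Psi_2 F)'$, both of which rely on the precise interplay between $h_1$, $h_2$, $\sigma$, and the location of $z_0$ inside the interval of interest.
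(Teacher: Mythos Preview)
For Part~(i) your argument is essentially the paper's: both show the $\psi_2$-convolution factor is constant and equal to $1/(\sigma h_2)$ on the support of $\psi_1$, then integrate by parts to shift the derivative onto $f_\sigma\ast p$, and finish with the standard kernel bias bound.

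For Part~(ii) you take a genuinely different route. The paper proceeds sequentially from the one-variable representation \eqref{eq:firstbias}: it reduces $(\psi_2\ast\eins_{[0,1/\sigma]})(z_0-z)$ to $1-K^{(-1)}(z/h_2)$ via the constraint $h_1+h_2<1/\sigma$, replaces $F(z_0-z)$ by its $0/1$ values for $|z|>h_1/4\ge1$, and then integrates by parts once more against $K'(z/h_1)$. Your approach instead unfolds both convolutions into a double integral over $(u,v)$, integrates out $y$ first via the antiderivative of $\psi_1$, and arrives directly at $\frac{1}{h_1}\int K\bigl((z_0-u)/h_1\bigr)(\Psi_2 F)'(u)\,du$; this is slightly more conceptual since the product-rule structure $(\Psi_2 F)'=\psi_2 F+\Psi_2 f$ is explicit and the final step is transparent. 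Both methods conclude by invoking $K\equiv1$ on the relevant support.

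One slip and one shared gap. Your claim ``$h_1\ge4$ implies $h_1/4\ge 2+|z_0|$'' is false; what you actually need (and have) is $h_1/4\ge 1>|z_0|+\tfrac12$, which covers the $[-\tfrac12,\tfrac12]$ portion of $\supp(\Psi_2 F)'$. For the $[z_0-h_2,z_0+h_2]$ portion you additionally need $h_2\le h_1/4$; the paper's proof imposes the same constraint in its last step (``simplifies for $h_2\le\frac{h_1}{4}$'') even though the proposition as stated omits it. The constraint is satisfied in the application (Definition~\ref{def: esti final}(ii), where $h_1=1/(2\sigma)$, $h_2\le1$, $\sigma<1/8$).
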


\begin{proof}
\emph{(i)} Noting that $(f_{\sigma}\ast\eins_{[0,1]})(y)=F\big(\frac{y}{\sigma}\big)-F\big(\frac{y-1}{\sigma}\big)$
for the cumulative distribution function $F$ of $f$, we plug-in
the choice of $\psi_{1}$ and substitute $z=z_0-\frac{x}{\sigma}$ to obtain
\begin{align}
 \mathcal U_\sigma(f\ast p)
 & =\frac{1}{\sigma h_{1}^{2}}\int_{\R}(\psi_{2}\ast\eins_{[0,1/\sigma]})(x/\sigma)K'\big(\frac{z_0}{h_{1}}-\frac{x}{\sigma h_{1}}\big)\big(F\big(\frac{x}{\sigma}\big)-F\big(\frac{x-1}{\sigma}\big)\big)\d x\nonumber \\
 & =\frac{1}{h_{1}^{2}}\int_{\R}(\psi_2\ast\eins_{[0,1/\sigma]})\big(z_0-z\big)K'\big(\frac{z}{h_{1}}\big)\big(F\big(z_0-z\big)-F\big(z_0-z-\frac{1}{\sigma}\big)\big)\d z.\label{eq:firstbias}
\end{align}
Moreover, we have
\begin{align*}
(\psi_2\ast\eins_{[0,\frac1\sigma]})\big(z_0-z\big) & =\frac{1}{h_{2}}\int_{\R}K\big(\frac{z_0-x}{h_{2}}\big)\eins_{[0,\frac1\sigma]}\big(z_0-z-x\big)\d x
  =\frac{1}{h_{2}}\int_{\R}K\big(\frac{x}{h_{2}}\big)\eins_{[0,\frac1\sigma]}\big(x-z\big)\d x.
\end{align*}
Denoting the anti-derivative of $K$ by $K^{(-1)}(z):=\int_{-\infty}^{z}K(x)\d x$,
we obtain
\begin{equation}
(\psi_2\ast\eins_{[0,1/\sigma]})\big(z_0-z\big)=K^{(-1)}\big(\frac{z}{h_{2}}+\frac{1}{\sigma h_{2}}\big)-K^{(-1)}\big(\frac{z}{h_{2}}\big).\label{eq:K1conv}
\end{equation}
On the assumptions that $K(x)=1$ for $|x|\le\frac{1}{4}$,
$h_{1}\le\frac{h_{2}}{8}$ and $\sigma h_{2}\ge8$, we have $\frac{z+1/\sigma}{h_{2}},\frac{z}{h_{2}}\in[-\frac{1}{4},\frac{1}{4}]$
for any $|z|\le h_{1}$ and thus
$
(\psi_2\ast\eins_{[0,1/\sigma]})\big(z_0-z\big)=\frac{1}{\sigma h_{2}}.
$
Since the boundary terms vanish by the compact support of $K$,
we conclude from (\ref{eq:firstbias}) together with integration by
parts that
\begin{align*}
\mathcal U_\sigma(f\ast p) & =\frac{1}{\sigma h_{2}h_{1}^{2}}\int_{\R}K'\big(\frac{z}{h_{1}}\big)\big(F\big(z_0-z\big)-F\big(z_0-z-\sigma^{-1}\big)\big)\d z\\
 & =\frac{1}{\sigma h_{1}h_{2}}\int_{\R}K\big(\frac{z}{h_{1}}\big)\big(f\big(z_0-z\big)-f\big(z_0-z-\sigma^{-1}\big)\big)\d z\\
 & =\frac{1}{\sigma h_{2}}\Big(\big(\frac{1}{h_{1}}K\big(\frac{\cdot}{h_{1}}\big)\ast f\big)(z_0)-\big(\frac{1}{h_{1}}K\big(\frac{\cdot}{h_{1}}\big)\ast f\big)\big(z_0-\sigma^{-1}\big)\Big)\\
 & =\frac{1}{\sigma h_{2}}\big(f(z_0)+f(z_0-\sigma^{-1})\big)+O\big(\frac{h_{1}^{s}}{\sigma h_{2}}\big),
\end{align*}
where the last bound is due to the usual bias estimate based on the local
H\"older regularity of $f$. Note that $f(z_0-\sigma^{-1})=0$ since
$\sigma\le1$ and $|z_0|<1/2$. {Especially, $f\in \mathcal H^s(z_0-\sigma)$ such that the bias estimate applies for convolution at $z_0-\sigma^{-1}$, too}.

\emph{(ii)} If $h_{1}+h_{2}<\frac{1}{\sigma}$, then $\frac{z}{h_{2}}+\frac{1}{\sigma h_{2}}>1$
for any $|z|\le h_{1}$ and thus (\ref{eq:K1conv}) reads as
\[
(\psi_2\ast\eins_{[0,1/\sigma]})\big(z_0-z\big)=1-K^{(-1)}\big(\frac{z}{h_{2}}\big).
\]
If $\sigma<1/2$, we moreover have $F(z_0-z-\sigma^{-1})=0$ for
all $z_0\in(-1/2,1/2)$ and {any} $z\in[-h_{1},h_{1}]\subset[-(2\sigma)^{-1},(2\sigma)^{-1}]$.
Using that $K(x)=1$ for $|x|\le1/4$ and using $F(x)=0$
for $x<-1/2$ and $F(x)=1$ for $x\ge1/2$, we obtain from (\ref{eq:firstbias})
for $h_{1}\ge4$
\begin{align*}
\mathcal U_\sigma(f\ast p) & =\frac{1}{h_{1}^{2}}\int_{|z|>h_{1}/4}\big(1-K^{(-1)}(z/h_{2})\big)K'\big(\frac{z}{h_{1}}\big)F\big(z_0-z\big)\d z\\
 & =\frac{1}{h_{1}^{2}}\int_{-\infty}^{-h_{1}/4}\big(1-K^{(-1)}(z/h_{2})\big)K'\big(\frac{z}{h_{1}}\big)\d z\\
 & =\frac{1}{h_{1}^{2}}\int_{-\infty}^{0}\big(1-K^{(-1)}(z/h_{2})\big)K'\big(\frac{z}{h_{1}}\big)\d z\\
 & =\frac{1}{h_{1}}\Big(\big(1-K^{(-1)}(0)\big)K\big(0\big)+\frac{1}{h_{2}}\int_{-\infty}^{0}K\big(\frac{z}{h_{2}}\big)K\big(\frac{z}{h_{1}}\big)\d z\Big)\\
 & =\frac{1}{2h_{1}}+\frac{1}{h_{1}h_{2}}\int_{-\infty}^{0}K\big(\frac{z}{h_{1}}\big)K\big(\frac{z}{h_{2}}\big)\d z
\end{align*}
where we have used integration by parts and symmetry of $K$.
Since $K(\cdot/h_{1})$ is constant one on $[-\frac{h_{1}}{4},\frac{h_{1}}{4}]$,
the last line simplifies for $h_{2}{\le1}\le\frac{h_{1}}{4}$ to
\[
\mathcal U_\sigma(f\ast p)=\frac{1}{2h_{1}}+\frac{1}{h_{1}h_{2}}\int_{-\infty}^{0}K\big(\frac{z}{h_{2}}\big)\d z=\frac{1}{h_{1}}.\qedhere
\]
\end{proof}
\begin{prop}
\label{prop:bias2} For $s,L>0$ let $f\in\mathcal G^s(z_0,L)$. Suppose that $\psi_1$ and $\psi_2$ are given by \eqref{eq: def H} and \eqref{eq: def K}, respectively, with $K$ satisfying Assumption~\ref{ass: kernel} with $\ell_K \ge \lfloor s \rfloor$. Let $\sigma\in(0,1]$.
\begin{enumerate}
\item If $h_{2}\le\frac{h_{1}}{4}$ and $h_{1}+h_{2}<1/\sigma$, then
\[
\mathcal V_\sigma(f)=h_{1}^{-1}f(z_0)+O\big(h_{1}^{-1}h_{2}^{s}\big).
\]
\item We have for all $\sigma,h_{1},h_{2}>0$ 
\[
\big|\mathcal V_\sigma(f)\big|\le h_{1}^{-1}\|K\|_{L^{1}}\|K'\|_{L^{1}}\|f\|_{\infty}.
\]
\end{enumerate}
\end{prop}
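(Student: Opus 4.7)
The plan is to first obtain an explicit closed form for $(\psi_1\ast\eins_{[-1,0]})(\sigma z)$, then substitute it into the definition of $\mathcal V_\sigma$ and exploit the support properties of $K$.

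\textbf{Explicit convolution.} Starting from the definition $\psi_1(u) = \frac{1}{\sigma h_1^2}K'\!\bigl(\frac{z_0}{h_1}-\frac{u}{\sigma h_1}\bigr)$, I would perform the substitution $v = \frac{z_0-z}{h_1} + \frac{u'}{\sigma h_1}$ inside
\[
(\psi_1\ast\eins_{[-1,0]})(\sigma z) = \int_{-1}^{0} \psi_1(\sigma z - u')\, \d u',
\]
which leads to the clean identity
\[
(\psi_1\ast\eins_{[-1,0]})(\sigma z) = \frac{1}{h_1}\Bigl[K\!\bigl(\tfrac{z_0-z}{h_1}\bigr) - K\!\bigl(\tfrac{z_0-z}{h_1}-\tfrac{1}{\sigma h_1}\bigr)\Bigr].
\]
Substituting this into $\mathcal V_\sigma(f) = \int(\psi_1\ast\eins_{[-1,0]})(\sigma z)\psi_2(z)f(z)\d z$ and changing variables to $w = (z_0-z)/h_2$ yields
\[
\mathcal V_\sigma(f) = \frac{1}{h_1}\int_{\R} \Bigl[K\!\bigl(\tfrac{h_2 w}{h_1}\bigr) - K\!\bigl(\tfrac{h_2 w}{h_1}-\tfrac{1}{\sigma h_1}\bigr)\Bigr] K(w)\, f(z_0-h_2 w)\, \d w.
\]

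\textbf{Part (i).} Since $\supp K\subset[-1,1]$, only $|w|\le 1$ contributes. Under $h_2\le h_1/4$ we have $|h_2 w/h_1|\le 1/4$, and since $K\equiv 1$ on $[-1/4,1/4]$ by Assumption~\ref{ass: kernel}, the first $K$-factor equals $1$. For the second factor I would check that its argument lies outside $\supp K$: under $h_1+h_2<1/\sigma$, the maximum over $w\in[-1,1]$ is $\tfrac{h_2}{h_1}-\tfrac{1}{\sigma h_1} < \tfrac{h_2}{h_1}-\tfrac{h_1+h_2}{h_1} = -1$, so this term vanishes. Consequently
\[
\mathcal V_\sigma(f) = \frac{1}{h_1}\int_\R K(w)\, f(z_0-h_2 w)\, \d w,
\]
which is a standard kernel-approximation term. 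A Taylor expansion of order $\lfloor s\rfloor$ at $z_0$ combined with the vanishing moments $\int w^\ell K(w)\d w = \eins_{\{\ell=0\}}$ for $\ell\le\ell_K$ and the local H\"older bound on $f^{(\lfloor s\rfloor)}$ delivers the claimed bias $h_1^{-1}f(z_0) + O(h_1^{-1}h_2^s)$.

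\textbf{Part (ii).} For the unconditional bound I would apply Young's convolution inequality: $\|\psi_1\ast\eins_{[-1,0]}\|_\infty \le \|\psi_1\|_{L^1}\cdot\|\eins_{[-1,0]}\|_\infty = \|\psi_1\|_{L^1}$, and a direct change of variable shows $\|\psi_1\|_{L^1} = h_1^{-1}\|K'\|_{L^1}$, $\|\psi_2\|_{L^1} = \|K\|_{L^1}$. Therefore
\[
|\mathcal V_\sigma(f)| \le \|\psi_1\|_{L^1}\, \|f\|_\infty\, \|\psi_2\|_{L^1} = h_1^{-1}\|K'\|_{L^1}\|K\|_{L^1}\|f\|_\infty,
\]
which is the stated bound.

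The only subtle point is the verification in part (i) that the shifted kernel $K(\cdot - 1/(\sigma h_1))$ has support disjoint from the $w$-integration region; once the support conditions $h_2\le h_1/4$ and $h_1+h_2<1/\sigma$ are combined, this collapse is immediate, and the rest is routine kernel calculus.
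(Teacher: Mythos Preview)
Your proof is correct and follows essentially the same approach as the paper: you derive the identical closed form $(\psi_1\ast\eins_{[-1,0]})(\sigma z)=h_1^{-1}\bigl[K(\tfrac{z_0-z}{h_1})-K(\tfrac{z_0-z}{h_1}-\tfrac{1}{\sigma h_1})\bigr]$, use the support and flat-top properties of $K$ under the stated conditions to reduce $\mathcal V_\sigma(f)$ to $h_1^{-1}(K_{h_2}\ast f)(z_0)$, and then invoke the standard kernel bias bound; part~(ii) via Young's inequality is likewise identical. The only cosmetic difference is your extra substitution $w=(z_0-z)/h_2$, which the paper omits but which changes nothing of substance.
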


\begin{proof}
(i) We use
\begin{align*}
(\psi_1\ast\eins_{[-1,0]})(\sigma z) & =\frac{1}{\sigma h_{1}^{2}}\int_{-1}^{0}K'\big(\frac{z_0-z}{h_{1}}+\frac{t}{\sigma h_{1}}\big)\d t\\
 & =\frac{1}{h_{1}}\int_{-1/(\sigma h_{1})}^{0}K'\big(\frac{z_0-z}{h_{1}}+t\big)\d t=\frac{1}{h_{1}}\Big(K\big(\frac{z_0-z}{h_{1}}\big)-K\big(\frac{z_0-z}{h_{1}}-\frac{1}{\sigma h_{1}}\big)\Big).
\end{align*}
Noting that $z\in[z_0-h_{2},z_0+h_{2}]$ by the support of $\psi_2$ and
using $h_{1}+h_{2}<1/\sigma$, we have $|z_0-z-\frac{1}{\sigma}|\ge\frac{1}{\sigma}-h_{2}>h_{1}$
and thus $K\big(\frac{z_0-z}{h_{1}}-\frac{1}{\sigma h_{1}}\big)=0$.
Since $K\big(\frac{z_0-z}{h_{1}}\big)=1$ for $|\frac{z_0-z}{h_{1}}|\le\frac{h_{2}}{h_{1}}\le\frac{1}{4}$,
we obtain
\begin{align}
\int(\psi_1\ast\eins_{[-1,0]})(\sigma z)\psi_2(z)f(z)\d z & =\frac{1}{h_{1}h_{2}}\int_{\R}K\big(\frac{z_0-z}{h_{1}}\big)K\big(\frac{z_0-z}{h_{2}}\big)f(z)\d z.\nonumber \\
 & =\frac{1}{h_{1}h_{2}}\int_{\R}K\big(\frac{z_0-z}{h_{2}}\big)f(z)\d z.\label{eq:secondBias}
\end{align}
Applying again the usual bias estimates on $(h_{2}^{-1}K(\cdot/h_{2})\ast f)(z_0)$,
we conclude
\[
\int(\psi_1\ast\eins_{[-1,0]})(\sigma z)\psi_2(z)f(z)\d z=h_{1}^{-1}f(z_0)+O(h_{1}^{-1}h_{2}^{s}).
\]

(ii) The second bound easily follows from Young's inequality:
\begin{align*}
\Big|\int(\psi_1\ast\eins_{[-1,0]})(\sigma z)\psi_2(z)f(z)\d z\Big| & \le\|\psi_1\ast\eins_{[-1,0]}\|_{\infty}\|\psi_2\|_{L^{1}}\|f\|_{\infty}\\
 & \le\|\psi_1\|_{L^{1}}\|\psi_2\|_{L^{1}}\|f\|_{\infty}\le h_{1}^{-1}\|K\|_{L^{1}}\|K'\|_{L^{1}}\|f\|_{\infty}.\qedhere
\end{align*}
\end{proof}
The next step is to investigate the variance based on Lemma~\ref{lem:higherOrder}.
\begin{prop}
\label{prop:variance}If $f$ is bounded and $\phi^{\star}(x,y):=\psi_1(y)\psi_2\big(\frac{y-x}{\sigma}\big)$
for some kernels $\psi_1\in L^{1}\cap L^{2},\psi_2\in L^{2}$, then
there is some $C>0$ such that
\begin{align*}
\Var\Big(\sum_{j,k}\phi^{\star}(X_{j},Y_{k})\Big)\le C & n\lambda\mu(1\vee\|f\|_{\infty})\\
&\times\Big((\mu+1)\big(n\lambda\sigma+n^{2}\lambda^{2}\sigma^{2}\big)\|\psi_2\|_{L^{1}}^{2}+(n\lambda\sigma+\mu+1)\|\psi_2\|_{L^{2}}^{2}\Big)\|\psi_1\|_{L^{2}}^{2}.
\end{align*}
\end{prop}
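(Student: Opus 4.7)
\textbf{Proof proposal for Proposition~\ref{prop:variance}.} I will write the variance as $\Var(S)=\E[S^2]-\E[S]^2$ where
\[
S=\int_{[0,1]\times\R}\phi^{\star}(x,y)\,M(\d x)N(\d y)=\sum_{j,k}\psi_1(Y_k)\psi_2\big((Y_k-X_j)/\sigma\big).
\]
The square $S^2$ is an integral against the product random measure $M(\d x_1)M(\d x_2)N(\d y_1)N(\d y_2)$, which I will decompose into the four cases arising from the diagonals $\{x_1=x_2\}$ and $\{y_1=y_2\}$. Writing $M(\d x_1)M(\d x_2)=M_{\neq}(\d x_1,\d x_2)+\delta_{x_1}(\d x_2)M(\d x_1)$ and similarly for $N$ yields a diagonal piece $\sum_{j,k}\phi^{\star}(X_j,Y_k)^2$, two mixed pieces where exactly one of the index pairs coincides, and a full off-diagonal piece. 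The mean $\E[S]^2$, computed via Corollary~\ref{cor: intensity}, will cancel the dominant $n^{2}\lambda^{2}\mu^{2}|A_1||A_2|Q_\sigma([0,1],B_1)Q_\sigma([0,1],B_2)$-type contribution in the off-diagonal expectation from Lemma~\ref{lem:higherOrder}(iv).

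For each of the four pieces I would evaluate the expectation using the appropriate part of Lemma~\ref{lem:higherOrder}: the full off-diagonal via part (iv), and the mixed ones via parts (ii), (iii) applied with the second coordinate reduced to the diagonal (which, by a limiting/mass-splitting argument on the disjoint decomposition of $[0,1]$ or $\R$ into vanishing cells, recovers the corresponding factorial moment densities appearing in Corollary~\ref{cor: intensity} and \eqref{eq:ExpN}). The fully diagonal piece $\E[\sum_{j,k}\phi^{\star}(X_j,Y_k)^2]$ can be read off directly from Corollary~\ref{cor: intensity} applied with test function $(\phi^{\star})^2$. After the cancellation with $\E[S]^2$ there remain, up to absolute constants, terms of the form
\[
n\lambda\mu\!\!\int\!(\phi^\star)^2\,\d m\d N^{\mathrm{int}},\qquad n^{2}\lambda^{2}\mu^{2}\!\!\int\! \phi^\star(x_1,y_1)\phi^\star(x_1,y_2)f_\sigma(y_1{-}x_1)f_\sigma(y_2{-}x_1)\,\d x_1\d y_1\d y_2,
\]
and analogous expressions coming from $Q_\sigma$- and $Q_\sigma^2$-contributions in parts (iii) and (iv) of Lemma~\ref{lem:higherOrder}.

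The next step is the kernel bookkeeping. Substituting $z=(y-x)/\sigma$ in every $\psi_2$ factor produces one factor of $\sigma$ per offspring variable whose parent is integrated out against a bounded density; this is the mechanism that creates the powers of $n\sigma$ in the statement. In the integrals that still carry the dispersal density I would use $\|f_\sigma\|_{L^1}=1$ and $\|f_\sigma\|_\infty\le\sigma^{-1}\|f\|_\infty$, together with Young's inequality $\|\psi_2\ast f_\sigma\|_{L^2}\le\|\psi_2\|_{L^2}$ or Cauchy--Schwarz, depending on whether I want to extract $\|\psi_2\|_{L^1}^2$ or $\|\psi_2\|_{L^2}^2$; the $\psi_1$-integrals are handled by $\|\psi_1\|_{L^2}^2$ after using boundedness of the relevant marginal density of $Y$ (bounded by $\|f\|_\infty$ thanks to $p=\eins_{[0,1]}$). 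Tracking the polynomial in $n\lambda,\mu,\sigma$ produced by each of the four pieces, one finds the quadratic terms $n^2\lambda^2\sigma^2\|\psi_2\|_{L^1}^2$ coming from the $Q_\sigma^2([0,1],\cdot,\cdot)$-contribution of part (iv), the linear $n\lambda\sigma\|\psi_2\|_{L^1}^2$ and $n\lambda\sigma\|\psi_2\|_{L^2}^2$ from parts (ii)--(iii), and the $\|\psi_2\|_{L^2}^2$ term from the full diagonal.

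The main obstacle is organising the cancellations cleanly: the fourth-order expectation in Lemma~\ref{lem:higherOrder}(iv) contains several $Q_\sigma$-products which \emph{almost} factor as $\E[S]^2$ but not quite (the $|A_1|Q_\sigma(A_2,B_1)Q_\sigma([0,1],B_2)$-type terms), and these cross-terms must be matched against the mixed diagonals from parts (ii) and (iii) before any crude bound is applied, otherwise the bound will be of order $n^3$ rather than $n^2$. Once this bookkeeping is carried out and Young/Cauchy--Schwarz is applied uniformly, collecting coefficients gives exactly the bound in the statement.
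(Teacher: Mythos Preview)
Your proposal is correct and follows essentially the same route as the paper: the four-way decomposition into $J_1$ (both diagonals), $J_2$ (off-diagonal in $X$, diagonal in $Y$), $J_3$ (diagonal in $X$, off-diagonal in $Y$) and $J_4$ (both off-diagonal, with $\E[S]^2$ subtracted), each expectation computed via the corresponding part of Lemma~\ref{lem:higherOrder} or Corollary~\ref{cor: intensity}, and then term-by-term estimation by Young's inequality after the substitution $z=(y-x)/\sigma$.

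One point deserves correction, though. Your description of the ``main obstacle'' is off: there is \emph{no} further cancellation between the cross-terms of part~(iv) and the mixed diagonals from parts~(ii) and~(iii). In the paper each of $J_1,J_2,J_3,J_4$ is bounded separately and the estimates are simply added. The only cancellation is the one between the leading $n^4\lambda^4\mu^2$-contribution of the off-diagonal fourth moment and $\E[S]^2$, and that is already absorbed into the statement of Lemma~\ref{lem:higherOrder}(iv). The $|A_1|Q_\sigma(A_2,B_1)Q_\sigma([0,1],B_2)$-type terms you single out become the paper's $J_{4,1}$, which is bounded directly by $n^3\lambda^3\sigma^2\|\psi_1\|_{L^2}^2\|\psi_2\|_{L^1}^2$ via Young; likewise $J_{2,1},J_{3,1},J_{4,2}$ each contribute a term of the same order. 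These $n^3$-terms are \emph{expected} and do appear in the final bound: expanding the statement, $n\lambda\mu\cdot n^2\lambda^2\sigma^2\|\psi_2\|_{L^1}^2\|\psi_1\|_{L^2}^2$ is precisely of order $n^3$. So you should not look for a matching that reduces $n^3$ to $n^2$; just bound each of the (about ten) sub-terms individually and collect.
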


\begin{proof}
We decompose
\begin{align*}
\Var\Big(\sum_{j,k}\phi^{\star}(X_{j},Y_{k})\Big) & =\E\Big[\Big(\sum_{j,k}\phi^{\star}(X_{j},Y_{k})\Big)^{2}\Big]-\E\Big[\sum_{j,k}\phi^{\star}(X_{j},Y_{k})\Big]^{2}\\
 & =\E\Big[\sum_{j,k}\phi^{\star}(X_{j},Y_{k})^{2}\Big]+\E\Big[\sum_{j_{1}\neq j_{2},k}\phi^{\star}(X_{j_{1}},Y_{k})\phi^{\star}(X_{j_{2}},Y_{k})\Big]\\
 & \quad+\E\Big[\sum_{j,k_{1}\neq k_{2}}\phi^{\star}(X_{j},Y_{k_{1}})\phi^{\star}(X_{j},Y_{k_{2}})\Big]\\
 & \quad+\Big(\E\Big[\sum_{j_{1}\neq j_{2},k_{1}\neq k_{2}}\phi^{\star}(X_{j_{1}},Y_{k_{1}})\phi^{\star}(X_{j_{2}},Y_{k_{2}})\Big]-\E\Big[\sum_{j,k}\phi^{\star}(X_{j},Y_{k})\Big]^{2}\Big)\\
 & =:J_{1}+J_{2}+J_{3}+J_{4}.
\end{align*}
Due to (\ref{eq:exp}) and Young's inequality we have
\begin{align*}
J_{1} & =n^{2}\lambda^{2}\mu\int_{0}^{1}\int_{\R}\psi_2^{2}\big(\frac{y-x}{\sigma}\big)\psi_1^{2}(y)(f_{\sigma}\ast\eins_{[0,1]})(y)\d y\d x+n\lambda\mu\int_{0}^{1}\int_{\R}\psi_2^{2}\big(z\big)\psi_1^{2}(x+\sigma z)f(z)\d z\d x\\
 & =n^{2}\lambda^{2}\mu\sigma\int_{\R}(\psi_2^{2}\ast\eins_{[0,1/\sigma]})(y/\sigma)\psi_1^{2}(y)(f_{\sigma}\ast\eins_{[0,1]})(y)\d y\\
 & \qquad+n\lambda\mu\int(\psi_1^{2}\ast\eins_{[-1,0]})(\sigma z)\psi_2^{2}(z)f(z)\d z.\\
 & \le n^{2}\lambda^{2}\mu\sigma\|\psi_2^{2}\ast\eins_{[0,1/\sigma]}\|_{\infty}\|\psi_1\|_{L^{2}}^{2}\|f_{\sigma}\ast\eins_{[0,1]}\|_{\infty}+n\lambda\mu\|\psi_1^{2}\ast\eins_{[-1,0]}\|_{\infty}\|\psi_2\|_{L^{2}}^{2}\|f\|_{\infty}\\
 & \le\mu\big(n^{2}\lambda^{2}\sigma+n\lambda\|f\|_{\infty}\big)\|\psi_1\|_{L^{2}}^{2}\|\psi_2\|_{L^{2}}^{2}.
\end{align*}
From Lemma~\ref{lem:higherOrder}(ii) and (iii), we deduce for $x_{1}\neq x_{2}$
in $[0,1]$ and $y\in\R$
\[
\E\big[\d M(x_{1})\d M(x_{2})\d N(y)\big]=n^{2}\lambda^{2}\mu\Big(n\lambda(f_{\sigma}\ast\eins_{[0,1]})(y)+f_{\sigma}(y-x_{1})+f_{\sigma}(y-x_{2})\Big)\d y\,\d x_{1}\d x_{2}
\]
as well as for $x\in[0,1]$ and $y_{1}\neq y_{2}$
\begin{align*}
\E\big[\d M(x)\d N(y_{1})\d N(y_{2})\big] & =n^{2}\lambda^{2}\mu^{2}\Big(n\lambda(f_{\sigma}\ast\eins_{[0,1]})(y_{1})(f_{\sigma}\ast\eins_{[0,1]})(y_{2})\\
 & \qquad+f_{\sigma}(y_{1}-x)(f_{\sigma}\ast\eins_{[0,1]})(y_{2})+f_{\sigma}(y_{2}-x)(f_{\sigma}\ast\eins_{[0,1]})(y_{1})\\
 & \qquad+\int_{0}^{1}f_{\sigma}(y_{1}-t)f_{\sigma}(y_{2}-t)\d t+\frac1{n\lambda} f_{\sigma}(y_{1}-x)f_{\sigma}(y_{2}-x)\Big)\d y_{1}\d y_{2}\d x.
\end{align*}
Therefore,
\begin{align*}
J_{2} & =\E\Big[\int_{0}^{1}\int_{0}^{1}\int_{\R}\phi^{\star}(x_{1},y)\phi^{\star}(x_{2},y)\eins_{\{x_{1}\neq x_{2}\}}M(\d x_{1})M(\d x_{2})N(\d y)\Big]=\mu(J_{2,1}+2J_{2,2})
\end{align*}
with
\begin{align*}
J_{2,1} & :=n^{3}\lambda^{3}\int_{0}^{1}\int_{0}^{1}\int_{\R}\psi_2\big(\frac{y-x_{1}}{\sigma}\big)\psi_2\big(\frac{y-x_{2}}{\sigma}\big)\psi_1^{2}(y)(f_{\sigma}\ast\eins_{[0,1]})(y)\d y\,\d x_{1}\d x_{2}\\
 & =n^{3}\lambda^{3}\sigma^{2}\int_{\R}(\psi_2\ast\eins_{[0,1/\sigma]})^{2}(y/\sigma)\psi_1^{2}(y)(f_{\sigma}\ast\eins_{[0,1]})(y)\d y\\
 & \le n^{3}\lambda^{3}\sigma^{2}\|\psi_1\|_{L^{2}}^{2}\|\psi_2\|_{L^{1}}^{2}\qquad\text{and}\\
J_{2,2} & :=n^{2}\lambda^{2}\int_{0}^{1}\int_{0}^{1}\int_{\R}\psi_2\big(\frac{y-x_{1}}{\sigma}\big)\psi_2\big(\frac{y-x_{2}}{\sigma}\big)\psi_1^{2}(y)f_{\sigma}(y-x_{1})\d y\,\d x_{1}\d x_{2}\\
 & =n^{2}\lambda^{2}\sigma\int_{0}^{1}\int_{\R}(\psi_2\ast\eins_{[0,1/\sigma]})\big(z+\frac{x_{1}}{\sigma}\big)\psi_1^{2}(x_{1}+\sigma z)\psi_2\big(z\big)f(z)\d z\,\d x_{1}\\
 & \le n^{2}\lambda^{2}\sigma\|\psi_2\|_{L^{1}}\|\psi_1\|_{L^{2}}^{2}\int_{\R}|\psi_2\big(z\big)f(z)|\d z
  =n^{2}\lambda^{2}\sigma\|\psi_1\|_{L^{2}}^{2}\|\psi_2\|_{L^{1}}^{2}\|f\|_{\infty}.
\end{align*}
For the third term we have
\[
J_{3}=\E\Big[\int_{0}^{1}\int_{\R}\int_{\R}\phi^{\star}(x,y_{1})\phi^{\star}(x,y_{2})\eins_{\{y_{1}\neq y_{2}\}}M(\d x)N(\d y_{1})N(\d y_{2})\Big]=\mu^{2}(J_{3,1}+2J_{3,2}+J_{3,3}+J_{3,4}),
\]
where
\begin{align*}
J_{3,1} & :=n^{3}\lambda^{3}\int_{[0,1]\times\R^2}\psi_2\big(\frac{y_{1}-x}{\sigma}\big)\psi_2\big(\frac{y_{2}-x}{\sigma}\big)\psi_1(y_{1})\psi_1(y_{2})(f_{\sigma}\ast\eins_{[0,1]})(y_{1})(f_{\sigma}\ast\eins_{[0,1]})(y_{2})\d y_{1}\d y_{2}\d x\\
 & =n^{3}\lambda^{3}\int_{0}^{1}\big(\psi_2(-\cdot/\sigma)\ast\big(\psi_1(f_{\sigma}\ast\eins_{[0,1]})\big)\big)(x)^{2}\d x\\
 & \le n^{3}\lambda^{3}\|\psi_2(\cdot/\sigma)\|_{L^{1}}^{2}\|\psi_1\|_{L^{2}}^{2}
  =n^{3}\lambda^{3}\sigma^{2}\|\psi_1\|_{L^{2}}^{2}\|\psi_2\|_{L^{1}}^{2},\\
J_{3,2} & :=n^{2}\lambda^{2}\int_{[0,1]\times\R^2}\psi_2\big(\frac{y_{1}-x}{\sigma}\big)\psi_2\big(\frac{y_{2}-x}{\sigma}\big)\psi_1(y_{1})\psi_1(y_{2})f_{\sigma}(y_{1}-x)(f_{\sigma}\ast\eins_{[0,1]})(y_{2})\d y_{1}\d y_{2}\d x\\
 & =n^{2}\lambda^{2}\int_{0}^{1}\big(\big(\psi_2(-\cdot/\sigma)f_{\sigma}(-\cdot)\big)\ast \psi_1\big)(x)\big(\psi_2(-\cdot/\sigma)\ast\big(\psi_1(f_{\sigma}\ast\eins_{[0,1]})\big)\big)(x)\d x\\
 & \le n^{2}\lambda^{2}\|\psi_2(-\cdot/\sigma)f_{\sigma}\|_{L^{1}}\|\psi_1\|_{L^{2}}^{2}\|\psi_2(-\cdot/\sigma)\|_{L^{1}}\\
 & =n^{2}\lambda^{2}\sigma\|\psi_2(-\cdot)f\|_{L^{1}}\|\psi_2\|_{L^{1}}\|\psi_1\|_{L^{2}}^{2}
  \le n^{2}\lambda^{2}\sigma\|f\|_{\infty}\|\psi_1\|_{L^{2}}^{2}\|\psi_2\|_{L^{1}}^{2},\\
J_{3,3} & :=n^{2}\lambda^{2}\int_{[0,1]^2\times\R^2}\psi_2\big(\frac{y_{1}-x}{\sigma}\big)\psi_2\big(\frac{y_{2}-x}{\sigma}\big)\psi_1(y_{1})\psi_1(y_{2})f_{\sigma}(y_{1}-t)f_{\sigma}(y_{2}-t)\d y_{1}\d y_{2}\,\d t\d x\\
 & =n^{2}\lambda^{2}\int_{[0,1]^2\times\R^2}\psi_2\big(z_{1}+\frac{t-x}{\sigma}\big)\psi_2\big(z_{2}+\frac{t-x}{\sigma}\big)\psi_1(\sigma z_{1}+t)\psi_1(\sigma z_{2}+t)f(z_{1})f(z_{2})\d z_{1}\d z_{2}\,\d t\d x\\
 & \le n^{2}\lambda^{2}\sigma\int_{\R^{4}}\big|\psi_2\big(z_{1}+x\big)\psi_2\big(z_{2}+x\big)\psi_1(\sigma z_{1}+t)\psi_1(\sigma z_{2}+t)\big|f(z_{1})f(z_{2})\d z_{1}\d z_{2}\,\d x\d t\\
 & =n^{2}\lambda^{2}\sigma\int_{\R^{2}}(|\psi_2|\ast|\psi_2|)(z_{1}-z_{2})(|\psi_1|\ast|\psi_1|)\big(\sigma(z_{1}-z_{2})\big)f(z_{1})f(z_{2})\d z_{1}\d z_{2}\\
 & \le n^{2}\lambda^{2}\sigma\|\psi_1\|_{L^{2}}^{2}\int_{\R^{2}}(|\psi_2|\ast|\psi_2|)(z_{1})f(z_{1}+z_{2})f(z_{2})\d z_{1}\d z_{2}
  \le n^{2}\lambda^{2}\sigma\|\psi_1\|_{L^{2}}^{2}\|\psi_2\|_{L^{1}}^{2}\|f\|_{\infty},\\
J_{3,4} & :=n\lambda\int_{[0,1]\times\R^2}\psi_2\big(\frac{y_{1}-x}{\sigma}\big)\psi_2\big(\frac{y_{2}-x}{\sigma}\big)\psi_1(y_{1})\psi_1(y_{2})f_{\sigma}(y_{1}-x)f_{\sigma}(y_{2}-x)\d y_{1}\d y_{2}\,\d x\\
 & =n\lambda\int_{0}^{1}\big((f_{\sigma}(-\cdot)\psi_2(-\cdot/\sigma))\ast \psi_1\big)(x)^{2}\d x\\
 & \le n\lambda\|\psi_1\|_{L^{2}}^{2}\|f_{\sigma}\psi_2(\cdot/\sigma)\|_{L^{1}}^{2}\\
 & 
 \le n\lambda\|f\|_{\infty}\|\psi_1\|_{L^{2}}^{2}\|\psi_2\|_{L^{2}}^{2}.
\end{align*}
Finally, we have due to Lemma~\ref{lem:higherOrder}(iv) for $x_{1}\neq x_{2}$
and $y_{1}\neq y_{2}$ that
\begin{align*}
 & \E\big[\d M(x_{1})\d M(x_{2})\d N(y_{1})\d N(y_{2})\big]-\E\big[\d M(x_{1})\d N(y_{1})\big]\E\big[\d M(x_{2})\d N(y_{2})\big]\\
 & \qquad=n^{2}\lambda^{2}\mu^{2}\Big(n\lambda f_{\sigma}(y_{1}-x_{2})(f_{\sigma}\ast\eins_{[0,1]})(y_{2})+n\lambda f_{\sigma}(y_{2}-x_{1})(f_{\sigma}\ast\eins_{[0,1]})(y_{1})\\
 & \qquad\qquad+n\lambda\int_{0}^{1}f_{\sigma}(y_{1}-t)f_{\sigma}(y_{2}-t)\d t+f_{\sigma}(y_{1}-x_{2})f_{\sigma}(y_{2}-x_{1})\\
 & \qquad\qquad+f_{\sigma}(y_{1}-x_{1})f_{\sigma}(y_{2}-x_{1})+f_{\sigma}(y_{1}-x_{2})f_{\sigma}(y_{2}-x_{2})\Big)\d y_{1}\d y_{2}\d x_{1}\d x_{2}
\end{align*}
and thus $J_{4}=\mu^{2}(2J_{4,1}+J_{4,2}+J_{4,3}+2J_{4,4})$ with
\begin{align*}
J_{4,1} & :=n^{3}\lambda^{3}\int_{[0,1]^2\times\R^2}\psi_2\big(\frac{y_{1}-x_{1}}{\sigma}\big)\psi_2\big(\frac{y_{2}-x_{2}}{\sigma}\big)\psi_1(y_{1})\psi_1(y_{2})\\
 & \qquad\qquad\times f_{\sigma}(y_{1}-x_{2})(f_{\sigma}\ast\eins_{[0,1]})(y_{2})\d y_{1}\d y_{2}\d x_{1}\d x_{2}\\
 & =n^{3}\lambda^{3}\int_{[0,1]\times\R}\big(\psi_2(\frac{\cdot}\sigma)\ast\eins_{[0,1]}\big)(y_{1})\big(\psi_2(-\frac{\cdot}\sigma)\ast\big((f_{\sigma}\ast\eins_{[0,1]})\psi_1\big)\big)(x_{2})\psi_1(y_{1})f_{\sigma}(y_{1}-x_{2})\d y_{1}\d x_{2}\\
 & =n^{3}\lambda^{3}\int_{0}^{1}\big(f_{\sigma}\ast\big((\psi_2(\cdot/\sigma)\ast\eins_{[0,1]})\psi_1\big)\big)(x_{2})\big(\psi_2(\cdot/\sigma)\ast\big((f_{\sigma}\ast\eins_{[0,1]})\psi_1\big)\big)(x_{2})\d x_{2}\\
 & \le n^{3}\lambda^{3}\|\psi_1\|_{L^{2}}^{2}\|\psi_2(\cdot/\sigma)\|_{L^{1}}^{2}\\
 & =n^{3}\lambda^{3}\sigma^{2}\|\psi_1\|_{L^{2}}^{2}\|\psi_2\|_{L^{1}}^{2},\\
J_{4,2} & :=n^{3}\lambda^{3}\int_{[0,1]^3\times\R^2}\psi_2\big(\frac{y_{1}-x_{1}}{\sigma}\big)\psi_2\big(\frac{y_{2}-x_{2}}{\sigma}\big)\psi_1(y_{1})\psi_1(y_{2}) f_{\sigma}(y_{1}-t)f_{\sigma}(y_{2}-t)\d y_{1}\d y_{2}\d t\d x_{1}\d x_{2}\\
 & =n^{3}\lambda^{3}\sigma^{2}\int_{[0,1]\times\R^2}(\psi_2\ast\eins_{[0,1/\sigma]})\big(z_{1}+t/\sigma\big)(\psi_2\ast\eins_{[0,1/\sigma]})\big(z_{2}+t/\sigma\big)\psi_1(\sigma z_{1}+t)\psi_1(\sigma z_{2}+t)\\
 & \qquad\qquad\times f(z_{1})f(z_{2})\d z_{1}\d z_{2}\d t\\
 & =n^{3}\lambda^{3}\sigma^{2}\int_{0}^{1}\big(\big((\psi_2\ast\eins_{[0,1/\sigma]})\psi_1(\sigma\cdot)\big)\ast f(-\cdot)\big)(t/\sigma)^{2}\d t\\
 & \le n^{3}\lambda^{3}\sigma^{3}\|\big((\psi_2\ast\eins_{[0,1/\sigma]})\psi_1(\sigma\cdot)\big)\|_{L^{2}}^{2}\\
 & \le n^{3}\lambda^{3}\sigma^{2}\|\psi_1\|_{L^{2}}^{2}\|\psi_2\|_{L^{1}}^{2},\\
J_{4,3}&:=  n^{2}\lambda^{2}\int_{[0,1]^2\times\R^2}\psi_2\big(\frac{y_{1}-x_{1}}{\sigma}\big)\psi_2\big(\frac{y_{2}-x_{2}}{\sigma}\big)\psi_1(y_{1})\psi_1(y_{2})\\
 & \hspace{3cm}\times f_{\sigma}(y_{1}-x_{2})f_{\sigma}(y_{2}-x_{1})\d y_{1}\d y_{2}\d x_{1}\d x_{2}\\
&\le  n^{2}\lambda^{2}\int_{\R^4}\big|\psi_2\big(z_{1}-\frac{x_{1}}{\sigma}\big)\psi_2\big(z_{2}+\frac{x_{1}}{\sigma}\big)\\
 & \hspace{3cm}\times \psi_1(x_{2})\psi_1(x_{1}+x_{2}+\sigma(z_{2}-z_{1}))\big|f(z_{1})f(z_{2})\d z_{1}\d z_{2}\d x_{1}\d x_{2}\\
&=  n^{2}\lambda^{2}\int_{\R^2}(f\ast|\psi_2|)\big(\frac{x_{1}}{\sigma}-z_{1}\big)|\psi_2|\big(\frac{x_{1}}{\sigma}\big)(|\psi_1|\ast|\psi_1|)(\sigma z_{1}-x_{1})f(z_{1})\d z_{1}\d x_{1}\\
&=  n^{2}\lambda^{2}\sigma\int_{\R}\big(f\ast\big((f\ast|\psi_2|)(|\psi_1|\ast|\psi_1|)(\sigma\cdot)\big)\big)(x_{1})|\psi_2|(x_{1})\d x_{1}\\
&\le  n^{2}\lambda^{2}\sigma\|\psi_1\|_{L^{2}}^{2}\|\psi_2\|_{L^{1}}^{2}\|f\|_{\infty},\\
J_{4,4} & :=n^{2}\lambda^{2}\int_{[0,1]^2\times\R^2}\psi_2\big(\frac{y_{1}-x_{1}}{\sigma}\big)\psi_2\big(\frac{y_{2}-x_{2}}{\sigma}\big)\psi_1(y_{1})\psi_1(y_{2}) f_{\sigma}(y_{1}-x_{1})f_{\sigma}(y_{2}-x_{1})\d y_{1}\d y_{2}\d x_{1}\d x_{2}\\
 & =n^{2}\lambda^{2}\sigma\int_{0}^{1}\big((\psi_2f)\ast \psi_1(\sigma\cdot)\big)(-x_{1}/\sigma)\big(f\ast((\psi_2\ast\eins_{[0,1/\sigma]})\psi_1(\sigma\cdot))\big)(-x_{1}/\sigma)\d x_{1}\\
 & \le n^{2}\lambda^{2}\sigma^{2}\|(\psi_2f)\ast \psi_1(\sigma\cdot)\|_{L^{2}}\|f\ast((\psi_2\ast\eins_{[0,1/\sigma]})\psi_1(\sigma\cdot))\|_{L^{2}}\\
 & \le n^{2}\lambda^{2}\sigma\|\psi_1\|_{L^{2}}^{2}\|\psi_2\|_{L^{1}}^{2}\|f\|_{\infty}.
\end{align*}
Combining all estimates yields for some $C>0$
\begin{align*}
\Var\Big(\sum_{j,k\in\Z}\phi^{\star}(X_{j},Y_{k})\Big)\le & Cn\lambda\mu(1+\|f\|_{\infty})\|\psi_1\|_{L^{2}}^{2}\\
&\times\Big((\mu+1)\big(n\lambda\sigma+n^{2}\lambda^{2}\sigma^{2}\big)\|\psi_2\|_{L^{1}}^{2}+(n\lambda\sigma+1+\mu)\|\psi_2\|_{L^{2}}^{2}\Big).\qedhere
\end{align*}
\end{proof}
If the test function only depends on $Y_k$, we obtain the following simplified and refined version of Proposition \ref{prop:variance}:
\begin{lem}\label{lem:VarY}
  We have $\Var\Big(\sum_{j}\psi_1(Y_j)\Big)\le n\lambda (\mu+\mu^2) \|\psi_1\|_{L^2}^2$.
\end{lem}
\begin{proof}
By \eqref{eq:BB} of Lemma~\ref{lem:higherOrder}(i), we have:
$$\E\Big[\sum_{j_1\neq j_2}\psi_1(Y_{j_1})\psi_1(Y_{j_2})\Big] = \E\Big[\sum_{j}\psi_1(Y_j)\Big]^2+n\lambda \mu^2\int_{\R \times \R}\psi_1(y_1)\psi_1(y_2)Q_\sigma^2([0,1], \d y_1,\d y_2),$$  
where  $Q_\sigma^2(A,\d y_1,\d y_2) = \big(\int_{A}f_\sigma(y_1-x)f_\sigma(y_2-x)\d x \big)\d y_1 \d y_2$. It follows that
  \begin{align*}
  \Var\Big(\sum_{j}\psi_1(Y_j)\Big)
  &=\E\Big[\sum_{j}\psi_1(Y_j)^2\Big]+\E\Big[\sum_{j_1\neq j_2}\psi_1(Y_{j_1})\psi_1(Y_{j_2})\Big]-\E\Big[\sum_{j}\psi_1(Y_j)\Big]^2\\
  &= n\lambda\mu\int_{\R}\psi_1(y)^2(f_\sigma\ast \eins_{[0,1]})(y)\d y\\
  &\quad+n\lambda\mu^2\int_{0}^1\int_{\R \times \R} \psi_1(y_1)\psi_1(y_2)f_\sigma(y_1-x)f_\sigma(y_2-x)\d y_1\d y_2\d x\\
  &=:J_1+J_2.
\end{align*}
These terms are bounded above by 
\begin{align*}
  J_1&\le n\lambda \mu \|\psi_1\|_{L^2}^2\|f_\sigma \ast \eins_{[0,1]}\|_\infty\le n\lambda \mu \|\psi_1\|_{L^2}^2\qquad\text{and}\\
  J_2&\le n\lambda\mu^2 \|(f_\sigma \ast \psi_1)^2\eins_{[0,1]}\|_{L^1}
  \le n\lambda\mu^2 \|f_\sigma\ast \psi_1\|_{L^2}^2
  \le n\lambda\mu^2 \|\psi_1\|_{L^2}^2.\qedhere
\end{align*}
\end{proof}

\begin{rem}\label{rem:VarianceGeneral}
  With only minor modifications the same techniques apply to point processes $M$ and $N$ in $\R^d$ where $M$ has intensity $n\lambda p$ with bounded probability density function $p$ on $\R^d$ and where $N$ has conditional intensity $\mu (M\ast f_\sigma)$ as before. In this case all indicator functions $\eins_{[0,1]}$ have to be replaced by $p$, all integrals of the type $\int_0^1\dots \d x$ have to be replaced by $\int_{\R^d}\dots p(x)\d x$ and the factors $\sigma$ in the above estimates have to be replaced by $\sigma^d$. We then obtain the following variance bounds: 
  \begin{align}
    \Var\Big(\sum_{j,k}\phi^{\star}(X_{j},Y_{k})\Big)\le & Cn\lambda\mu(1+\|f\|_{\infty})(\|p\|_\infty+\|p\|_\infty^3)\notag\\
    &\times\Big((\mu+1)\big(n\lambda\sigma^d+n^{2}\lambda^{2}\sigma^{2d}\big)\|\psi_2\|_{L^{1}}^{2}+(n\lambda\sigma^d+1+\mu)\|\psi_2\|_{L^{2}}^{2}\Big)\|\psi_1\|_{L^{2}}^{2},\label{eq:VarBoundGeneral}\\
    \Var\Big(\sum_{k}\phi^{\star}(Y_{k})\Big)\le &n\lambda (\mu+\mu^2)\|p\|_\infty \|\psi_1\|_{L^2}^2.\notag
  \end{align}
\end{rem}

\subsection{Proof of  upper and lower bounds}\label{sec:ProofsMainResults}
Based on the previous bounds, we can prove our main results.
\subsubsection*{Proof of Theorem~\ref{thm: rates main}}
The theorem is an immediate consequence of Proposition~\ref{thm:rates}.
\begin{proof}[Proof of Proposition~\ref{thm:rates}]
 From \eqref{eq:exp} and Propositions~\ref{prop:bias1} and \ref{prop:bias2}, we conclude
\begin{align}
\E\Big[\frac{1}{n\lambda h_1}\hat{f}_{h_{1},h_{2}}(z_0)\Big] & =\sigma h_2\,\mathcal U_\sigma(f\ast p)+\frac{h_2}{n\lambda}\mathcal V_\sigma(f)\nonumber\\
&=f(z_0)+O\big(h_{1}^{s}\big)+O\Big(\frac{h_{2}}{nh_1}\Big),\quad\text{for }h_{1}\le \frac{h_2}{8},\sigma h_{2}\ge8,\sigma\le1,\label{eq:biasf1}\\
\E\Big[\frac{1}{h_2}\hat{f}_{h_{1},h_{2}}(z_0)\Big] & =h_1\mathcal V_\sigma(f)+\sigma n\lambda h_1\,\mathcal U_\sigma(f\ast p)\nonumber\\
&=f(z_0)+O\big(h_{2}^{s}\big)+\sigma n\lambda,\quad \text{for }h_{2}\le\min(1,\frac1\sigma-h_1),h_{1}\in[4,\frac{1}{\sigma}],\sigma<\frac14\label{eq:biasf2}
\end{align}
Since the kernels from \eqref{eq: def H} and \eqref{eq: def K} satisfy
\[
\|\psi_1\|_{L^{2}}^{2}=O(\sigma^{-1}h_{1}^{-3}),\qquad\|\psi_2\|_{L^{1}}^{2}=O(1),\quad\|\psi_2\|_{L^{2}}^{2}=O(h_{2}^{-1}),
\]
Proposition~\ref{prop:variance} yields
\begin{align*}\Var\Big(\frac{1}{n\lambda h_1}\hat{f}_{h_{1},h_{2}}(z_0)\Big) & \lesssim\frac{h_{2}^{2}\sigma^{2}}{n\sigma h_{1}^{3}}\Big(1+\frac{1}{n\sigma}+\frac{1}{h_{2}n\sigma}+\frac{1}{h_{2}(n\sigma)^{2}}\Big),\\
\Var\Big(\frac{1}{h_2}\hat{f}_{h_{1},h_{2}}(z_0)\Big) & \lesssim\frac{1}{nh_{2}}\Big(1+n\sigma+h_{2}n\sigma+h_{2}(n\sigma)^{2}\Big)\frac{1}{\sigma h_{1}}
\end{align*}
with the constants depending on $\|f\|_\infty,\lambda$ and $\mu$.
Combining these bounds, we conclude:
\begin{enumerate}
\item[\emph{(i)}] If $n\sigma\ge1\ge h_1$ and $h_{2}=8/\sigma$ we obtain
\begin{align*}
\E\Big[\big(\hat{f}_{h_{1}}^{(1)}(z_0)-f(z_0)\big)^{2}\Big] & =\E\Big[\big(\frac{1}{n\lambda h_1}\hat{f}_{h_{1},8/\sigma}(z_0)-f(z_0)\big)^{2}\Big]\\
 & \lesssim h_{1}^{2s}+\frac{(h_{2}\sigma)^{2}}{(n\sigma h_{1})^{2}}+\frac{(h_{2}\sigma)^{2}}{n\sigma h_{1}^{3}}\Big(1+\frac{1}{h_{2}n\sigma}\Big)\lesssim h_{1}^{2s}+\frac{1}{n\sigma h_{1}^{3}}.
\end{align*}
\item[\emph{(ii)}] We have for $h_{1}=\frac{1}{2\sigma}$ and $h_{2}\in(0,1]$ and $\sigma\le1/8$
\begin{align*}
\E\Big[\big(\hat{f}_{h_{2}}^{(2)}(z_0)-f(z_0)\big)^{2}\Big] & =\E\Big[\big(\frac{1}{h_2}\hat{f}_{h_{1},h_{2}}(z_0)-f(z_0)-\sigma n\lambda\big)^{2}\Big]
  \lesssim h_{2}^{2s}+\frac{1}{nh_{2}}\vee\frac{\sigma}{h_{2}}\vee n\sigma^{2}.\quad\qedhere
\end{align*}
\end{enumerate}
\end{proof}

\begin{proof}[Proof of Theorem~\ref{thm:lowerBound}]
Without loss of generality let $z_0=0$. Consider the density $f_{0}(z)=6(\frac{1}{4}-z^{2})\eins_{[-1/2,1/2]}(z)\in\mathcal{G}^{s}(0,L)$  for some $L>0$.
Let $K$ be a {$\max\{2,s+1\}$-times continuously differentiable} function with $\supp K\subset[-1/2,1/2]$
and $K'(0)>0$. Set for some $\epsilon,h>0$
\[
f_{1}(z):=f_{0}(z)+\varepsilon h^{s}K'(z/h),\qquad z\in\R,
\]
where $K'$ denotes the derivative of $K$. Since the compact support
of $K$ implies $\int K'(z)\d z=0$ and because $K'$ is uniformly bounded,
the function $f_{1}$ is a density supported on $[-1/2,1/2]$ if $h$
is small enough. Due to 
\begin{align*}
|f_{1}|_{\mathcal H^{s}(z_0)} & \le |f_{0} |_{\mathcal H^{s}(z_0)}+\varepsilon h^{s}|K'(x/h)|_{\mathcal H^{s}(z_0)} \\
& =|f_{0}|_{\mathcal H^{s}(z_0)}+\epsilon|K'|_{\mathcal H^{s}(z_0)} \\
&=|f_{0}|_{\mathcal H^{s}(z_0)}+\epsilon |K |_{\mathcal H^{s+1}(z_0)},
\end{align*}
 we also conclude $|f_{1}|_{\mathcal H^s(0)}\le L$ up to inflating the value of $L$. The maximum of the two radii defines our $L_0$. Therefore,
we have constructed two alternatives $f_{0},f_{1}\in\mathcal{G}^s(0,L)$ satisfying
\[
|f_{0}(0)-f_{1}(0)|=\epsilon h^{s}|K'(0)|\gtrsim h^{s}.
\]
The lower bounds for the pointwise loss follow from \citet[Thm. 2.2]{tsybakov2009},
if the total variation distance of the corresponding observations
laws remains bounded for the choices
\begin{equation}
h=\begin{cases}
\sigma^{1/(2s+1)}, & \text{if }{n\sigma\ge 1\text{ and }}n\sigma^{(2s+2)/(2s+1)}<1,\\
(\sigma\sqrt{n})^{1/s}, & \text{if }n\sigma^{(2s+2)/(2s+1)}\ge1\text{ and }\sigma<n^{-(4s+3)/(6s+6)},\\
(n\sigma)^{-1/(2s+3)} & \text{if }\sigma\ge n^{-(4s+3)/(6s+6)}.
\end{cases}\label{eq:hChoice-1}
\end{equation}
{(Recall that a lower bound for $\sigma\le n^{-1}$ follows from standard results for nonparametric density estimation.)}
We denote by $\P_{0,n}$ and $\P_{1,n}$ the joint distribution of the point processes $M$ and $N$ where the coniditional intensity measure of $N$ is given by $M\ast f_{0,\sigma}$ and $M\ast f_{1,\sigma}$, respectively.
By conditioning on the number $n_x=M([0,1])\sim\mathrm{Poiss}(\lambda n)$
of parents, the number $n_y=N(\R)\sim\mathrm{Poiss}(\mu n_x)|n_x$
of children and the location of the parents traits $(X_{i})_{i=1,\dots,n_x}\overset{i.i.d}{\sim}\mathrm{U}([0,1])|n_x$
we obtain
\begin{align}
\|\P_{1,n}-\P_{0,n}\|_{TV}= & \sum_{n_x\ge1}\frac{(n\lambda)^{n_x}}{n_x!}e^{-\lambda n}\sum_{n_y\ge0}\frac{(\mu n_x)^{n_y}}{n_y!}e^{-\mu n_x}d(\P_{1,n},\P_{0,n}|n_x,n_y)\qquad\text{where}\label{eq:TV}\\
d(\P_{1,n},\P_{0,n}|n_x,n_y):= & \int_{[0,1]^{n_x}}\Big\|\P_{1,n}^{Y_{1},\dots,Y_{n_y}|X_{1}=x_{1},\dots,X_{n_x}=x_{n_x}}-\P_{0,n}^{Y_{1},\dots,Y_{n_y}|X_{1}=x_{1},\dots,X_{n_x}=x_{n_x}}\Big\|_{TV}\d x,\nonumber 
\end{align}
taking into account that on the event $\{M([0,1])=0\}$ the conditional
distributions coincide. Conditional on $X_{j}=x_{j},j=1,\dots,n_x$,
the offspring $(Y_{i})$ are i.i.d. Estimating the total variation
distance by the $\chi^{2}$-distance { and exploiting the behaviour of the latter under product measures \citep[Chapter 2.4]{tsybakov2009}}, we thus have
\begin{align*}
&d(\P_{1,n},\P_{0,n}|n_x,n_y)^{2}  \le\int_{[0,1]^{n_x}}\big\|\P_{1,n}^{Y_{1},\dots,Y_{n_y}|X_{1}=x_{1},\dots,X_{n_x}=x_{n_x}}-\P_{0,n}^{Y_{1},\dots,Y_{n_y}|X_{1}=x_{1},\dots,X_{n_x}=x_{n_x}}\big\|_{TV}^{2}\d x\\
 &\quad \le\int_{[0,1]^{n_x}}\chi^{2}\big(\P_{1,n}^{Y_{1},\dots,Y_{n_y}|X_{1}=x_{1},\dots,X_{n_x}=x_{n_x}},\P_{0,n}^{Y_{1},\dots,Y_{n_y}|X_{1}=x_{1},\dots,X_{n_x}=x_{n_x}}\big)\wedge1\,\d x\\
 &\quad =\int_{[0,1]^{n_x}}\Big(\Big(1+\chi^{2}(\P_{1,n}^{Y_{1}|X_{1}=x_{1},\dots,X_{n_x}=x_{n_x}},\P_{0,n}^{Y_{1}|X_{1}=x_{1},\dots,X_{n_x}=x_{n_x}})\Big)^{n_y}-1\Big)\wedge1\,\d x\\
 &\quad \le e\int_{[0,1]^{n_x}}\Big(n_y\chi^{2}(\P_{1,n}^{Y_{1}|X_{1}=x_{1},\dots,X_{n_x}=x_{n_x}},\P_{0,n}^{Y_{1}|X_{1}=x_{1},\dots,X_{n_x}=x_{n_x}})\Big)\wedge1\,\d x.
\end{align*}
Under $\P_{k,n}^{Y_{1}|X_{1}=x_{1},\dots,X_{n_x}=x_{n_x}}$, $k\in\{0,1\}$,
$Y_{1}$ is distributed according to the density
\[
g_{n,\sigma}^{(k)}(y|x)=\frac{1}{n_x}\sum_{j=1}^{n_x}f_{k,\sigma}(y-x_{j}).
\]
Therefore,
\begin{align*}
&\chi^2(\P_{1,n}^{Y_{1}|X_{1}=x_{1},\dots,X_{n_x}=x_{n_x}},\P_{0,n}^{Y_{1}|X_{1}=x_{1},\dots,X_{n_x}=x_{n_x}} \big)=\int_{g_{n,\sigma}^{(0)}>0}\frac{(g_{n,\sigma}^{(1)}(y|x)-g_{n,\sigma}^{(0)}(y|x))^{2}}{g_{n,\sigma}^{(0)}(y|x)}\d y\\
 &\qquad  =\frac{\varepsilon^{2}h^{2s}}{\sigma^{2}}\int_{g_{n,\sigma}^{(0)}>0}\Big(\frac{1}{n_x}\sum_{j=1}^{n_x}K'\Big(\frac{y}{\sigma h}-\frac{x_{j}}{\sigma h}\Big)\Big)^{2}\frac{\d y}{g_{n,\sigma}^{(0)}(y|x)}.
\end{align*}
In order to estimate the previous integral, we need a lower bound
for the denominator $g_{n,\sigma}^{(0)}$ on the support 
\begin{align*}
\supp\sum_{j=1}^{n_x}K'\Big(\frac{\cdot}{\sigma h}-\frac{x_{j}}{\sigma h}\Big) & \subset\Big[-\sigma h/2,1+\sigma h/2\Big].
\end{align*}
Defining the event 
$
A:=\{\forall y\in[-\sigma h/2,1+\sigma h/2]:g_{n,\sigma}^{(0)}(y|X)\ge c\},
$
we obtain
\begin{align*}
d(\P_{1,\lambda},\P_{0,\lambda}|n_x,n_y)^{2} & \le e\E_{n}\Big[\Big(n_y\chi^{2}(\P_{1,\lambda}^{Y_{1}|X_{1},\dots,X_{n_x}},\P_{0,\lambda}^{Y_{1}|X_{1},\dots,X_{n_x}})\Big)\wedge1\Big]\\
 & \le\frac{e}{c}\E_{n}\Big[\frac{n_y\varepsilon^{2}h^{2s}}{\sigma^{2}}\int\Big(\frac{1}{n_x}\sum_{j=1}^{n_x}K'\Big(\frac{y}{\sigma h}-\frac{X_{j}}{\sigma h}\Big)\Big)^{2}\d y\Big]+e\P_{n}\big(A^{c}\big).
\end{align*}
where $X_{1},\dots,X_{n_x}\overset{i.i.d.}{\sim}\mathrm{U}([0,1])$
under $\P_{n}$ and $\E_{n}$ denotes the expectation with respect
to $\P_{n}$. Applying Lemma~\ref{lem:auxLowerBound} from below, we have $e\P_{n}\big(A^{c}\big)\le C\sqrt{\frac{\log\sigma^{-1}}{n_x\sigma}}=:r_{n}$.
Hence,
\begin{equation}
d\big(\P_{1,\lambda},\P_{0,\lambda}|n_x,n_y\big)^{2}\le\frac{e}{c}\epsilon^2\frac{n_yh^{2s+1}}{\sigma}\int\E_{n}\Big[\Big(\frac{1}{n_x}\sum_{j=1}^{n_x}K'\Big(y-\frac{X_{j}}{\sigma h}\Big)\Big)^{2}\Big]\d y+r_{n}.\label{eq:chi2Step2}
\end{equation}
 To obtain a sharp upper bound, we will use two different approaches
to estimate the previous display. While the first one will use a stochastic
integral approximation of $\int_{0}^{1/(\sigma h)}K'(y-x)\d x$, the
second approach relies on a numerical approximation. 

In the first case we represent (\ref{eq:chi2Step2}) via
\begin{align}
d\big(\P_{1,n},\P_{0,n}|n_x,n_y\big)^{2}\le&\frac{e}{c}\varepsilon^{2}\frac{n_yh^{2s+1}}{\sigma}\int\E_{n}\Big[\frac{1}{n_x}\sum_{j=1}^{n_x}K'\Big(y-\frac{X_{j}}{\sigma h}\Big)\Big]^{2}\notag\\
&\quad+\Var_{n}\Big(\frac{1}{n_x}\sum_{j=1}^{n_x}K'\Big(y-\frac{X_{j}}{\sigma h}\Big)\Big)\,\d y+r_{n}.\label{eq:chi2Step3}
\end{align}
Owing to
\[
\E\Big[\frac{1}{n_x}\sum_{j=1}^{n_x}K'\Big(y-\frac{X_{j}}{\sigma h}\Big)\Big]=\E\Big[K'\Big(y-\frac{X_{1}}{\sigma h}\Big)\Big]=\sigma h\int_{0}^{1/(\sigma h)}K'(y-x)\d x,
\]
the first term is bounded by
\begin{align}
\frac{n_yh^{2s+1}}{\sigma}\int\E_{n}\Big[\frac{1}{n_x}\sum_{j=1}^{n_x}K'\Big(y-\frac{X_{j}}{\sigma h}\Big)\Big]^{2}\d y & =n_y\sigma h^{2s+3}\int\Big(\int_{0}^{1/(\sigma h)}K'\Big(y-x\Big)\d x\Big)^{2}\d y\nonumber \\
 & =n_y\sigma h^{2s+3}\int\big(K\big(y-\frac{1}{\sigma h}\big)-K(y)\big)^{2}\d y\nonumber \\
 & \le2n_y\sigma h^{2s+3}\|K\|_{L^{2}}^{2}.\label{eq:lowerBoundDecon}
\end{align}
The variance term in (\ref{eq:chi2Step3}) can be estimated by
\begin{align*}
\Var_{n}\Big(\frac{1}{n_x}\sum_{j=1}^{n_x}K'\Big(y-\frac{X_{j}}{\sigma h}\Big)\Big)=\frac{1}{n_x}\Var_{n}\Big(K'\Big(y-\frac{X_{1}}{\sigma h}\Big)\Big) & \le\frac{1}{n_x}\E\Big[K'\Big(y-\frac{X_{1}}{\sigma h}\Big)^{2}\Big].
\end{align*}
Therefore,
\[
\frac{n_yh^{2s+1}}{\sigma}\int\Var_{n}\Big(\frac{1}{n_x}\sum_{j=1}^{n_x}K'\Big(y-\frac{X_{j}}{\sigma h}\Big)\Big)\,\d y\le\frac{n_y}{n_x}\frac{h^{2s+1}}{\sigma}\E_{n}\Big[\int K'\Big(y-\frac{X_{1}}{\sigma h}\Big)^{2}\d y\Big]=\frac{n_y}{n_x}\frac{h^{2s+1}}{\sigma}\|K'\|_{L^{2}}^{2}.
\]
Together with (\ref{eq:chi2Step3}) and (\ref{eq:lowerBoundDecon})
we conclude for some constant $C>0$
\begin{equation}
d^{2}\big(\P_{1,n},\P_{0,n}|n_y,n_x\big)\le C\varepsilon^{2}\Big(n_y\sigma h^{2s+3}+\frac{n_y}{n_x}\frac{h^{2s+1}}{\sigma}\Big)+r_{n}.\label{eq:lowerBoundRegime1}
\end{equation}

In the second and third regime we need a different bound. Applying
a Riemann sum motivated approximation, we decompose
\begin{align*}
\frac{1}{n_x}\sum_{j=1}^{n_x}K'\Big(y-\frac{X_{j}}{\sigma h}\Big) & =\sum_{j=1}^{n_x}\sum_{k=1}^{n_x}\eins_{[(k-1)/n_x,k/n_x)}(X_{j})\int_{(k-1)/n_x}^{k/n_x}K'\Big(y-\frac{X_{j}}{\sigma h}\Big)\d x\\
 & =\sum_{j=1}^{n_x}\sum_{k=1}^{n_x}\eins_{[(k-1)/n_x,k/n_x)}(X_{j})\int_{(k-1)/n_x}^{k/n_x}K'\Big(y-\frac{x}{\sigma h}\Big)\d x\\
 & \quad+\sum_{j=1}^{n_x}\sum_{k=1}^{n_x}\eins_{[(k-1)/n_x,k/n_x)}(X_{j})\int_{(k-1)/n_x}^{k/n_x}\Big(K'\Big(y-\frac{X_{j}}{\sigma h}\Big)-K'\Big(y-\frac{x}{\sigma h}\Big)\Big)\d x\\
 & =:I_{1}(y)+I_{2}(y).
\end{align*}
Therefore, we obtain an alternative bound for (\ref{eq:chi2Step2}):
\begin{align}
d\big(\P_{1,n},\P_{0,n}|n_y,n_x\big)^{2}\le\frac{e\varepsilon^{2}}{c}\frac{n_yh^{2s+1}}{\sigma}\int\E_{n}\Big[I_{1}(y)^{2}\Big]\,\d y+\frac{e\varepsilon^{2}}{c}\frac{n_yh^{2s+1}}{\sigma}\int\E_{n}\Big[I_{2}(y)^{2}\Big]\,\d y+r_{n}.\label{eq:chi2Step4}
\end{align}
For the first term, we calculate
\begin{align*}
\E_{n}\Big[I_{1}(y)^{2}\Big] & =\sum_{j,j'}\sum_{k,k'}\underbrace{\P_{n}\Big(X_{j}\in\Big[\frac{k-1}{n_x},\frac{k}{n_x}\Big),X_{j'}\in\Big[\frac{k'-1}{n_x},\frac{k'}{n_x}\Big)\Big)}_{=n_x^{-2}\text{ if }j\neq j'\text{ and }=n_x^{-1}\text{ if }j=j',k=k'}\\
 & \qquad\qquad\times\int_{(k-1)/n_x}^{k/n_x}\int_{(k'-1)/n_x}^{k'/n_x}K'\Big(y-\frac{x}{\sigma h}\Big)K'\Big(y-\frac{x'}{\sigma h}\Big)\d x'\d x\\
 & =\sum_{k,k'}\Big(\frac{n_x^{2}-n_x}{n_x^{2}}+\frac{n_x}{n_x}\eins_{\{k=k'\}}\Big)\int_{(k-1)/n_x}^{k/n_x}\int_{(k'-1)/n_x}^{k'/n_x}K'\Big(y-\frac{x}{\sigma h}\Big)K'\Big(y-\frac{x'}{\sigma h}\Big)\d x'\d x\\
 & \le \Big(2-\frac{1}{n_x}\Big)\Big(\int_{0}^{1}K'\Big(y-\frac{x}{\sigma h}\Big)\d x\Big)^{2}\\
 & \le2(\sigma h)^{2}\Big(\int_{0}^{1/\sigma h}K'(y-x)\d x\Big)^{2}
  =2(\sigma h)^{2}\big(K(y)-K\big(y-1/(\sigma h)\big)\big)^{2}.
\end{align*}
Hence,
\[
\frac{n_yh^{2s+1}}{\sigma}\int\E_{n}\Big[I_{1}(y)^{2}\Big]\d y\le2n_y\sigma h^{2s+3}\int\big(K(y)-K\big(y-1/(\sigma h)\big)\big)^{2}\d y\le4\|K\|_{L^{2}}^{2}n_y\sigma h^{2s+3}.
\]
The second term in (\ref{eq:chi2Step4}) can be bounded as follows:
\begin{align*}
&\E_{n}\Big[I_{2}(y)^{2}\Big]  =\E_{n}\Big[\Big(\sum_{j=1}^{n_x}\sum_{k=1}^{n_x}\eins_{[(k-1)/n_x,k/n_x)}(X_{j})\int_{(k-1)/n_x}^{k/n_x}\Big(K'\Big(y-\frac{X_{j}}{\sigma h}\Big)-K'\Big(y-\frac{x}{\sigma h}\Big)\Big)\d x\Big)^{2}\Big]\\
 & \le\E_{n}\Big[\Big(\sum_{j=1}^{n_x}\sum_{k=1}^{n_x}\eins_{[(k-1)/n_x,k/n_x)}(X_{j})\int_{(k-1)/n_x}^{k/n_x}\int_{[(x\wedge X_{j})/(\sigma h),(x\vee X_{j})/(\sigma h)]}|K''(y-z)|\d z\d x\Big)^{2}\Big]\\
 & \le\E_{n}\Big[\Big(\sum_{j=1}^{n_x}\sum_{k=1}^{n_x}\eins_{[(k-1)/n_x,k/n_x)}(X_{j})\int_{(k-1)/n_x}^{k/n_x}\int_{(k-1)/(n_x\sigma h)}^{k/(n_x\sigma h)}|K''(y-z)|\d z\d x\Big)^{2}\Big]\\
 & =\E_{n}\Big[\Big(\frac{1}{n_x}\sum_{j=1}^{n_x}\sum_{k=1}^{n_x}\eins_{[(k-1)/n_x,k/n_x)}(X_{j})\int_{(k-1)/(n_x\sigma h)}^{k/(n_x\sigma h)}|K''(y-z)|\d z\Big)^{2}\Big].
\end{align*}
With an analogous calculation as for $\E_n[I_{1}(x)^{2}]$ we obtain
\begin{align*}
\frac{n_yh^{2s+1}}{\sigma}\int\E_{n}\Big[I_{2}(y)^{2}\Big]\,\d y & \le2\frac{n_yh^{2s+1}}{n_x^{2}\sigma}\int\Big(\int_{0}^{1/(\sigma h)}|K''(y-z)|\d z\Big)^{2}\d y\\
 & \le2\frac{n_yh^{2s+1}}{n_x^{2}\sigma}\|K''\|_{L^{1}}\int_{0}^{1/(\sigma h)}\int|K''(y-z)|\,\d y\d z\le2\|K''\|_{L^{1}}^{2}\frac{n_yh^{2s}}{n_x^{2}\sigma^{2}}.
\end{align*}
Therefore, we conclude from (\ref{eq:chi2Step4}) that for some constant
$C'>0$ 
\[
d\big(\P_{1,n},\P_{0,n}|n_y,n_x\big)^{2}\le C'\epsilon^2\big(n_y\sigma h^{2s+3}+\frac{n_yh^{2s}}{n_x^{2}\sigma^{2}}\big)+r_{n}.
\]
In combination with (\ref{eq:lowerBoundRegime1}) we obtain for some
constant $C''>0$
\[
d\big(\P_{1,n},\P_{0,n}|n_y,n_x\big)^{2}\le C''\epsilon^2\min\Big(n_y\sigma h^{2s+3}+\frac{n_y}{n_x}\frac{h^{2s+1}}{\sigma},n_y\sigma h^{2s+3}+\frac{n_yh^{2s}}{n_x^{2}\sigma^{2}}\Big)+C\Big(\frac{\log\sigma^{-1}}{n_x\sigma}\Big)^{1/2}.
\]
If we plug this estimate into \eqref{eq:TV}, we deduce
\begin{align*}
\|\P_{1,n}-\P_{0,n}\|_{TV}^{2} & \le C''\epsilon^2\min\Big(\E[N(\R)]\sigma h^{2s+3}+\E\Big[\frac{N(\R)}{M([0,1])}\Big]\frac{h^{2s+1}}{\sigma},\\
 & \qquad\qquad\qquad\E[N(\R)]\sigma h^{2s+3}+\E\Big[\frac{N(\R)}{M([0,1])^{2}}\eins_{\{M([0,1])>0\}}\Big]\frac{h^{2s}}{\sigma^{2}}\Big)\\
 & \qquad+C\Big(\frac{\log\sigma^{-1}}{\sigma}\E\Big[\frac{1}{M([0,1])}\eins_{\{M([0,1])>0\}}\Big]\Big)^{1/2}.
\end{align*}
Using that $\E[N(\R)|M([0,1])]=\mu M([0,1])$, the remaining expectations
are given by
\begin{gather*}
\E[N(\R)]=\mu\lambda n,\qquad\E\Big[\frac{N(\R)}{M([0,1])}\Big]=\mu,\\
\E\Big[\frac{N(\R)}{M([0,1])^{2}}\eins_{\{M([0,1])>0\}}\Big]=\mu\E\Big[\frac{\eins_{\{M([0,1])>0\}}}{M([0,1])}\Big]=\mu e^{-\lambda}\sum_{n\ge1}\frac{\lambda^{n}}{n\cdot n!}\le\frac{2\mu}{\lambda}e^{-\lambda}\sum_{n\ge1}\frac{\lambda^{n+1}}{(n+1)!}\le\frac{2\mu}{\lambda n}.
\end{gather*}
Therefore,
\[
\|\P_{1,n}-\P_{0,n}\|_{TV}^{2}\le C'''\epsilon^{2}\min\Big(\sigma n\lambda\mu h^{2s+3}+\frac{\mu h^{2s+1}}{\sigma},\sigma n\lambda\mu h^{2s+3}+\frac{\mu h^{2s}}{\lambda n\sigma^{2}}\Big)+C'''\Big(\frac{\log\sigma^{-1}}{\lambda n\sigma}\Big)^{1/2}
\]
where the last term is $o(1)$ by assumption. Based on this estimate,
the theorem follows by verifying that this upper bound remains bounded
for $h$ from (\ref{eq:hChoice-1}).
\end{proof}

\begin{lem}
\label{lem:auxLowerBound}For $g_{n,\sigma}^{(0)}(y|x)=\frac{1}{n}\sum_{j=1}^{n}f_{0,\sigma}(y-x_{j})$
and $f_{0}(z)=6(\frac{1}{4}-z^{2})\eins_{[-1/2,1/2]}(z)$ there is
some $C>0$ such that the event $A:=\Big\{\forall y\in[-\sigma h/2,1+\sigma h/2]:g_{n,\sigma}^{(0)}(y|X)\ge1/14\Big\}$
for $X_{1},\dots,X_{n}\overset{i.i.d.}{\sim}\mathrm{U}([0,1])$ satisfies
$
\P(A)\ge1-C\sqrt{\frac{\log\sigma^{-1}}{n\sigma}}.
$
\end{lem}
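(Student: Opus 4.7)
The plan is to decompose $g_{n,\sigma}^{(0)}(y|X) = \E[g_{n,\sigma}^{(0)}(y|X)] + \big(g_{n,\sigma}^{(0)}(y|X)-\E[g_{n,\sigma}^{(0)}(y|X)]\big)$ and control each piece separately. First I would compute the conditional mean via
\[
\E[g_{n,\sigma}^{(0)}(y|X)] = \int_0^1 f_{0,\sigma}(y-x)\,\d x = F_\sigma(y)-F_\sigma(y-1),
\]
where $F_\sigma$ is the c.d.f. of $f_{0,\sigma}$. For $y\in[\sigma/2,1-\sigma/2]$ this is exactly $1$, and by the explicit form $F_0(z)=\tfrac12+\tfrac{3z}{2}-2z^3$, one has $F_0(-h/2)=\tfrac12-\tfrac{3h}{4}+\tfrac{h^3}{4}\ge\tfrac17$ provided $h$ is small enough (which holds for $n$ large in every regime of $h$ used in the lower-bound proof, since $h\to 0$ there). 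Combined with the symmetric behaviour at $y\in(1-\sigma/2,1+\sigma h/2]$, this shows
\[
\inf_{y\in[-\sigma h/2,\,1+\sigma h/2]}\E[g_{n,\sigma}^{(0)}(y|X)]\ge \tfrac17.
\]

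Next I would show the concentration bound
\[
\P\Big(\sup_{y\in[-\sigma h/2,\,1+\sigma h/2]}\big|g_{n,\sigma}^{(0)}(y|X)-\E[g_{n,\sigma}^{(0)}(y|X)]\big|>\tfrac1{14}\Big)\le C\sqrt{\tfrac{\log\sigma^{-1}}{n\sigma}}
\]
by a discretisation plus Bernstein argument. For fixed $y$, the summands $f_{0,\sigma}(y-X_j)$ are i.i.d., bounded by $\|f_{0,\sigma}\|_\infty=\tfrac{3}{2\sigma}$, with variance $\le\|f_{0,\sigma}\|_\infty\E[f_{0,\sigma}(y-X_1)]\lesssim\sigma^{-1}$, so Bernstein yields $\P(|g_{n,\sigma}^{(0)}(y)-\E[g_{n,\sigma}^{(0)}(y)]|>t)\le 2\exp(-cn\sigma t^2)$ for $t$ bounded. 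Exploiting that $f_{0,\sigma}$ is Lipschitz with constant $\|f_{0,\sigma}'\|_\infty=O(\sigma^{-2})$, one covers the interval by an $\eta$-net $\mathcal N$ of cardinality $|\mathcal N|\lesssim \sigma^{-2}\eta^{-1}$, so that the in-between oscillation of $y\mapsto g_{n,\sigma}^{(0)}(y|X)-\E[g_{n,\sigma}^{(0)}(y|X)]$ is at most $\sigma^{-2}\eta$. Choosing $\eta\asymp\sigma^3$ and a union bound give
\[
\P\big(\sup|g-\E[g]|>t+O(\sigma)\big)\le 2|\mathcal N|\exp(-cn\sigma t^2)\lesssim \sigma^{-3}\exp(-cn\sigma t^2),
\]
which, set to $t\asymp\sqrt{\log\sigma^{-1}/(n\sigma)}$ with a sufficiently large constant, is $o\big(\sqrt{\log\sigma^{-1}/(n\sigma)}\big)$; and if $\sqrt{\log\sigma^{-1}/(n\sigma)}$ is not asymptotically small the lemma is trivial for $C$ large enough.

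Putting both pieces together, on the complement of the bad event we have both $\E[g_{n,\sigma}^{(0)}(y|X)]\ge\tfrac17$ and $|g_{n,\sigma}^{(0)}(y|X)-\E[g_{n,\sigma}^{(0)}(y|X)]|\le\tfrac1{14}$ uniformly in $y\in[-\sigma h/2,1+\sigma h/2]$, so $g_{n,\sigma}^{(0)}(y|X)\ge \tfrac17-\tfrac1{14}=\tfrac1{14}$, establishing $\P(A)\ge 1-C\sqrt{\log\sigma^{-1}/(n\sigma)}$. The main obstacle is the quantitative endpoint estimate: one has to verify, through the explicit polynomial form of $F_0$, that the conditional mean stays uniformly away from zero on the full interval $[-\sigma h/2,1+\sigma h/2]$ even as $y$ approaches the boundary of the support of $f_{0,\sigma}\ast\eins_{[0,1]}$, since $f_0$ itself vanishes at $\pm1/2$; this forces us to work in the regime where $h$ is bounded away from $1$, a mild condition which is automatic in all three subregimes of \eqref{eq:hChoice-1}.
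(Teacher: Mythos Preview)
Your strategy---lower-bound the mean by $\tfrac17$ and then control the uniform deviation---matches the paper's. The paper also computes $\inf_y \E[g_{n,\sigma}^{(0)}(y|X)]=6\int_{h/2}^{1/2}(\tfrac14-z^2)\,\d z\ge\tfrac17$, valid for every $h\in(0,\tfrac12)$, so you do not need ``$h$ small enough''. Where you diverge is the concentration step: the paper applies Markov's inequality to pass to the expected supremum of the empirical process $n^{-1/2}\sum_j\big(f_0((y-X_j)/\sigma)-\E[\cdot]\big)$ and then bounds this expectation via a VC-type entropy estimate from Gin\'e--Nickl (the class $\{x\mapsto f_0((y-x)/\sigma)\}$ is of bounded variation, hence VC, with envelope $\tfrac32\eins_{[-\sigma,2\sigma]}$ of $L^2(\P^{X_1})$-norm $\lesssim\sqrt\sigma$), which yields $\P(A^c)\lesssim\sqrt{\log\sigma^{-1}/(n\sigma)}$ directly. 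Your Bernstein-plus-net route avoids empirical process machinery and, executed correctly, actually gives an exponential tail rather than the polynomial one.

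There is, however, a genuine slip in your calibration of $t$. With $t=C'\sqrt{\log\sigma^{-1}/(n\sigma)}$ one gets $\sigma^{-3}\exp(-cn\sigma t^2)=\sigma^{cC'^2-3}$, which depends on $\sigma$ only; when $\sigma$ is fixed (or bounded away from $0$) and $n\to\infty$ this stays constant while the target $C\sqrt{\log\sigma^{-1}/(n\sigma)}$ decays like $n^{-1/2}$, so your claimed $o(\cdot)$ fails there. The fix is to take $t$ a fixed constant, say $t=\tfrac{1}{28}$, so that $t+O(\sigma)<\tfrac{1}{14}$. Then $\P(A^c)\lesssim\sigma^{-3}\exp(-c'n\sigma)$, and one checks: if $n\sigma>M\log\sigma^{-1}$ with $M$ large the exponential kills the polynomial prefactor and is easily $\le\sqrt{\log\sigma^{-1}/(n\sigma)}$, while if $n\sigma\le M\log\sigma^{-1}$ the target bound exceeds $M^{-1/2}$ and the lemma is trivial for $C\ge\sqrt{M}$.
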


\begin{proof}
We first bound the expectation
\[
\E[g_{n,\sigma}^{(0)}(y|X)])=\int_{0}^{1}f_{0,\sigma}(y-x)\d x=\int_{0}^{1/\sigma}f_{0}\big(\frac{y}{\sigma}-x\big)\d x=f_{0}\ast\eins_{[0,1/\sigma]}\big(\frac{y}{\sigma}\big)
\]
uniformly from below: For any $h\in(0,1/2)$ we have 
\begin{align*}
\inf_{y\in[-\sigma h/2,1+\sigma h/2]}f_{0}\ast\eins_{[0,1/\sigma]}\big(\frac{y}{\sigma}\big) & =\inf_{y\in[-\sigma h/2,1+\sigma h/2]}6\int_{-1/2}^{1/2}\big(\frac{1}{4}-z^{2}\big)\eins_{[0,1/\sigma]}\big(\frac{y}{\sigma}-z\big)\d z\\
 & =6\int_{h/2}^{1/2}\big(\frac{1}{4}-z^{2}\big)\d z\ge\frac{1}{7}.
\end{align*}
By continuity of $f_{0}$ we deduce
\begin{align*}
\P(A^{c}) & =\P\Big(\inf_{y\in[-\sigma h/2,1+\sigma h/2]\cap\mathbb{Q}}g_{n,\sigma}^{(0)}(y|X)<\frac{1}{14}\Big)\\
 & \le\P\Big(\sup_{y\in[-\sigma h/2,1+\sigma h/2]\cap\mathbb{Q}}\Big|\frac{1}{n\sigma}\sum_{j=1}^{n}\Big(f_{0}\big((y-X_{j})/\sigma\big)-\E\big[f_{0}\big((y-X_{j})/\sigma\big)\big]\Big)\Big|>\frac{1}{14}\Big)\\
 & \le\frac{14}{\sigma\sqrt{n}}\E\Big[\max_{y\in[-\sigma h/2,1+\sigma h/2]\cap\mathbb{Q}}\Big|\frac{1}{\sqrt{n}}\sum_{j=1}^{n}\Big(f_{0}\big((y-X_{j})/\sigma\big)-\E\big[f_{0}\big((y-X_{j})/\sigma\big)\big]\Big)\Big|\Big].
\end{align*}
To bound the previous expectation, we will apply an entropy bound:
Since $f_{0}$ is of bounded variation, the transition class 
\[
\mathcal{F}=\big\{[0,1]\ni x\mapsto f_{0}((y-x)/\sigma)\big|y\in[-\sigma h/2,1+\sigma h/2]\cap\mathbb{Q}\big\}
\]
 is of Vapnik-Cervonenkis type satisfying the covering number bound
$N(\mathcal{F},L^{2}(\mathbb{W}),\epsilon)\le(A/\varepsilon)^{2w}$
for any probability measure $\mathbb{W}$, any $w>3$ and some constant
$A$ which does not depend on the dilation parameter $\sigma$ \citep[Proposition 3.6.12]{GineNickl2016}.
Moreover, $\mathcal{F}$ admits the envelope $F_{\sigma}:=\frac{3}{2}\eins_{[-1,2]}(\cdot/\sigma)$
since $\sup_{z}f_{0}(z/\sigma)=\frac{3}{2}$ and $\supp f_{0}((y-x)/\sigma)\subset\supp F_{\sigma}$
for any $y\in[-\sigma h/2,1+\sigma h/2]$. Theorem~3.5.4 and Remark~3.5.5
by \citet{GineNickl2016} thus yield for some $C>0$
\begin{align*}
 & \E\Big[\max_{y\in[-\sigma h/2,1+\sigma h/2]\cap\mathbb{Q}}\Big|\frac{1}{\sqrt{n}}\sum_{j=1}^{n}f_{0}\big((y-X_{j})/\sigma\big)-\E\big[f_{0}\big((y-X_{j})/\sigma\big)\big]\big)\Big|\Big]\\
 & \quad\le8\sqrt{2}\|F_{\sigma}\|_{L^{2}(\P^{X_{1}})}\int_{0}^{1}\sup_{\mathbb{W}}\sqrt{\log2N(\mathcal{F},L^{2}(\mathbb{W}),\tau\|F_{\sigma}\|_{L^{2}(\P^{X_{1}})})}d\tau\\
 & \quad\le C\|F_{\sigma}\|_{L^{2}(\P^{X_{1}})}|\log(\|F_{\sigma}\|_{L^{2}(\P^{X_{1}})})|^{1/2}.
\end{align*}
Since $\|F_{\sigma}\|_{L^{2}(\P^{X_{1}})}^{2}=\frac{9}{4}\int_{0}^{1}\eins_{[-1,2]}(x/\sigma)\d x\le\frac{27}{4}\sigma$,
we conclude
$
\P(A^{c})\le C\sqrt{\frac{\log\sigma^{-1}}{n\sigma}}.
$
\end{proof}

\begin{proof}[Proof of Theorem \ref{thm: d>1 case}] 

Due to \eqref{eq:ExpN} and \eqref{eq:MeanSD}, we have
$\E[\widehat f_{h_1}^{\rm sd}(z_0)]=K_{h_1}\ast f(z_0)$.
Together with the bias-variance decomposition and a standard bias estimate, we obtain
\[
  \E\big[|\hat f_{h_1}^{\rm sd}(z_0)-f(z_0)|^2\big]\lesssim h_1^{2s}+\Var\big(\hat f_{h_1}^{\rm sd}(z_0)\big).
\]
A bound for the variance of $\hat f_{h_1}^{\rm sp}(z_0)=(n\lambda\mu)^{-1}\sum_{j}\psi_1(Y_j)$ is given in Remark~\ref{rem:VarianceGeneral}.
Applying Plancherel's identity we have
\begin{align*}
  \Var\big(\hat f^{\rm sd}_{h_1}(z_0)\big)\lesssim \frac{1}{n}\|\psi_1\|_{L^2}^2
  &=\frac1{n}\|\F\psi_2\|_{L^2}^2\\
  &\lesssim \frac{\sigma^d}{n}\int_{\R^d}\Big|\frac{e^{-iu^\top y}\F K(hu)}{\phi_p(u/\sigma)}\Big|^2\d u
  \lesssim \frac{\sigma^{2d}}{n}\|K\|_{L^1}^2\int_{|u|\le1/(\sigma h) }|\phi_p(u)|^{-2}\d u
\end{align*}
We conlcude the first inequality in Theorem~\ref{thm: d>1 case}.

In the mildly ill-posed case $|\phi_{p}(u)|\gtrsim(1+|u|^{2})^{-t/2}$
we have $\int_{\{u\in\R^{d}:|u|\le1/(\sigma h)\}}|\phi_{p}(u)|^{-2}\d u\lesssim(\sigma h)^{-2t-d}$.
Therefore,
\[
\E\big[|\hat{f}_{1,h}^{(d)}(z_0)-f(z_0)|^{2}\big]\lesssim h^{2s}+\frac{\sigma^{2d}}{n(h\sigma)^{2t+d}}=h^{2s}+\frac{1}{n\sigma^{2t-d}h^{2t+d}}.
\]
For $h=(n\sigma^{2t+d-2})^{1/(2s+2t+d)}$ we thus obtain the asserted
rate of convergence. In the severely ill-posed case $|\phi_{p}(u)|\gtrsim e^{-\gamma|u|^{\beta}}$
we obtain 
\[
\E\big[|\hat{f}_{h}(z_0)-f(z_0)|^{2}\big]\lesssim h^{2s}+\frac{\sigma^{2d}}{nh\sigma}e^{2\gamma(h\sigma)^{-\beta}}
\]
which yields the claimed rate for $h=\sigma^{-1}(\frac{1}{4\gamma}\log n)^{-1/\beta}.$
\end{proof}

\subsection{Proofs for the scaling estimators}
\begin{proof}[Proof of Lemma~\ref{lem:EstNSigma}]
Using \eqref{eq:ExpN}, we obtain
 \begin{align*}
    \E[\hat T]=\E[N(\R\setminus[0,1])]&=n\lambda \mu\Big(\int_0^1\int_{-\infty}^0f_\sigma(y-x)\d y\d x+\int_0^1\int_1^{\infty}f_\sigma(y-x)\d y\d x\Big)\\
    &=n\sigma\lambda \mu\Big(\int_0^{1/\sigma}F(-x)\d x+\int_0^{1/\sigma}\big(1- F(x)\big)\d x\Big)\\
    &=n\sigma\lambda\mu\int_0^{1/\sigma}\P(|D|>x)\d x\\
    &=n\sigma\lambda\mu\E[|D|]
  \end{align*}
where $D$ has density $f$, exploiting in particular the fact that $\P(|D|>x)=0$ for any $x>1/2$ and $1/\sigma>1/2$. The bound for the variance of $\hat T$ follows from Lemma~\ref{lem:VarY} with $\psi_1:=\eins_{[-\sigma/2,0]}+\eins_{[1,1+\sigma/2]}$.
\end{proof}

\begin{proof}[Proof of Proposition~\ref{prop:estSigma}]
First we note, that $l\mapsto \bar X_{(l)}$ is increasing. Indeed, we have
\begin{align*}
  \bar X_{(l+1)}\ge \bar X_{(l)} \Longleftrightarrow \sum_{j=1}^{l+1}X_{(j)}\ge \frac{l+1}{l}\sum_{j=1}^lX_{(j)}
 \Longleftrightarrow \sum_{j=1}^lX_{(j)}+X_{(l+1)} \ge \sum_{j=1}^lX_{(j)}+ \bar X_{(l)},
\end{align*}
and $X_{(l+1)} \ge  \bar X_{(l)}$ holds true since the $X_{(j)}$ are ordered increasingly. Furthermore, Lemma~\ref{lem:EstNSigma} yields
\begin{align*}
  \P\Big(\Big|\frac{\hat l}{\kappa _n \E[\hat T]^{1/2}}-1\Big|>\frac12\Big)
  &=\P\Big(\Big|\frac{\hat T}{\E[\hat T]}-1\Big|>\frac12\Big|\frac{\hat T^{1/2}}{\E[\hat T]^{1/2}}+1\Big|\Big)\\
  &\le\P\Big(\Big|\frac{\hat T}{\E[\hat T]}-1\Big|>\frac12\Big)\le 2\frac{\Var(\hat T)}{\E[\hat T]^2}\lesssim \frac{1}{n\sigma}\to 0. 
\end{align*}

Consequently, with $l:=\frac{\kappa_n}2\E[\hat T]^{1/2}$ of order $\kappa _n\sqrt{\sigma n}$, the event 
$$\Lambda:=\{l\le \hat l\le 3 l\} = \Big\{\Big|\frac{\hat l}{\kappa _n \E[\hat T]^{1/2}}-1\Big|>\frac12\Big\}$$
satisfies $\P(\Lambda)\to 1$. The statement of Proposition \ref{prop:estSigma} being equivalent to  
$\hat{\sigma}^{(1)}-\sigma = O_{\mathbb P}(\kappa_n \sqrt{\sigma/n})$, setting $\varepsilon:=C\frac{l}{n}$, 
for some $C>0$, it is enough to show that $\P\big(|\hat{\sigma}^{(1)}-\sigma|>2\varepsilon\big)$ can be made arbitrarily small by taking $C$ (and $n$) sufficiently large.
Let $R:=X_{(l+1)}-\bar{X}_{(l)}\in[0,1]$. We have
\begin{align*}
\P\big(|\hat{\sigma}^{(1)}-\sigma|>2\varepsilon\big) 
&=\P\big(\hat{\sigma}^{(1)}-\sigma>2\epsilon\big)+\P\big(\sigma-\hat{\sigma}^{(1)}>2\epsilon\big)\\
 & \le\P\big(\hat{\sigma}^{(1)}-\sigma>2\epsilon\big)+\P\big(\sigma-\hat{\sigma}^{(1)}>2R\big)+\P(R>\varepsilon)\\
 & =:T_{1}+T_{2}+T_{3}.
\end{align*}
We will consider all three terms separately. For $T_{1}$,  using that the support of the offspring location trait is $[-\tfrac{\sigma}{2}, \tfrac{\sigma}{2}]$, we have, on the event $\Lambda$:
\[
\hat{\sigma}^{(1)}=-2(\min_{j}Y_{j}-X_{(1)})-2X_{(1)}+2\bar{X}_{(\hat l)}\le\sigma-2X_{(1)}+2{\bar{X}_{(3 l)}}.
\]
Since conditional on $|\mathcal{X}|=n_{x}$ it holds $X_{(l)}\sim\mathrm{Beta}(l,n_{x}+1-l)$,
we obtain 
\begin{align*}
T_{1}&=\P\big(\hat{\sigma}^{(1)}-\sigma>2\epsilon\big)  \le\P\big(-2X_{(1)}+2\bar{X}_{(3 l)}>2C\frac{l}{n}\big) +\P(\Lambda^c)\\
 & \le\frac{2n}{Cl}\E\big[\bar{X}_{(3 l)}\big]+o(1)
 =\frac{2n}{Cl}\frac{1}{3 l}\sum_{i=1}^{3 l}\E\Big[\frac{i}{|\mathcal{X}|+1}\Big]+o(1)
 \le\frac{2n}{Cl}\E\Big[\frac{3 l}{|\mathcal{X}|+1}\Big]+o(1).
\end{align*}
Due to $|\mathcal{X}|\sim\mathrm{Poiss}(\lambda n)$,  we have $n\E\big[\frac{1}{|\mathcal{X}|+1}\big] = \tfrac{1}{\lambda}(1-\exp(-\lambda n)) = O(1)$ hence $T_{1}$ can be made
arbitrarily small for sufficiently large $C$.

To bound $T_2$, we note that $R$ only depends on $\mathcal{X}$. Also, conditional on $\mathcal{X}$, the offspring trait $Y_{1}$ has 
distribution function $F_{\sigma|\mathcal{X}}(z):=\frac{1}{|\mathcal{X}|}\sum_{i}F_{\sigma}(z-X_{i})$
where $F_{\sigma}=F(\cdot/\sigma)$ is the cumulative distribution function corresponding to
the scaled dispersal density $f_{\sigma}$. Therefore, we bound $T_{2}$
as follows:
\begin{align*}
T_{2}=\P(\hat{\sigma}^{(1)}<\sigma-2R) & \le\P\big(\min_{j}Y_{j}>R-\frac{\sigma}{2}+{\bar{X}_{(l)}}\big)+\P(\Lambda^c)\\
 & =\P\big(\forall j:Y_{j}>X_{(l+1)}-\frac{\sigma}{2}\big)+o(1)\\
 & =\E\Big[\P\big(\forall j:Y_{j}>X_{(l+1)}-\frac{\sigma}{2}\big|\mathcal{X},|\mathcal{Y}|\big)\Big]+o(1)\\
 & =\E\Big[\P\big(Y_{1}>X_{(l+1)}-\frac{\sigma}{2}\big|\mathcal{X}\big)^{|\mathcal{Y}|}\Big]+o(1)\\
 & =\E\Big[\big(1-F_{\sigma|\mathcal{X}}\big(X_{(l+1)}-\frac{\sigma}{2}\big)\big)^{|\mathcal{Y}|}\Big]+o(1).
\end{align*}
The boundary assumption \eqref{eq:boundaryCondition} on $f = F'$ yields $F(z-\frac{1}{2}) \ge (\gamma z) \vee 0$ for $z \le 1$. It follows that
\begin{align*}
F_{\sigma|\mathcal{X}}\big(X_{(l+1)}-\frac{\sigma}{2}\big) & =\frac{1}{|\mathcal{X}|}\sum_{i}F_{\sigma}\big(X_{(l+1)}-\frac{\sigma}{2}-X_{i}\big)\\
 & \ge\frac{\gamma}{\sigma|\mathcal{X}|}\sum_{i}\big(0\vee\big(X_{(l+1)}-X_{i}\big)\big)\\
 & =\frac{\gamma}{\sigma|\mathcal{X}|}\sum_{i:X_{i}<X_{(l+1)}}\big(X_{(l+1)}-X_{i}\big)\\
 & =\frac{\gamma}{\sigma|\mathcal{X}|}\sum_{i=1}^{l}\big(X_{(l+1)}-X_{(i)}\big)=\frac{\gamma lR}{\sigma|\mathcal{X}|}.
\end{align*}
Pick  $C_{1}>0$. We infer from the previous bound
\begin{align*}
T_{2} & \le\E\Big[\big(1-\frac{\gamma lR}{\sigma|\mathcal{X}|}\big)^{|\mathcal{Y}|}\Big]\\
 & \le\E\Big[\big(1-\frac{\gamma C_{1}}{|\mathcal{X}|}\big)^{|\mathcal{Y}|}\Big]+\P(lR<C_{1}\sigma)\\
 & \le\exp(-\gamma C_{1}^{1/3})+\P(|\mathcal{Y}|<C_{1}^{-1/3}n)+\P(|\mathcal{X}|>C_{1}^{1/3}n)+\P(lR<C_{1}\sigma)
\end{align*}
using $(1-\kappa)^{n_{y}}\le\exp(n_{y}\log(1-\kappa))\le\exp(-n_{y}\kappa)$
for $\kappa=\gamma C_1^{2/3}/n\to0$. Since $|\mathcal{X}|\sim\mathrm{Poiss}(\lambda n)$,
conditional on $|\mathcal{X}|$, we have $|\mathcal{Y}|\sim\mathrm{Poiss}(\mu|\mathcal{X}|)$,
and for arbitrary small $\delta$ and sufficiently large
$C_{1}=C_{1}(\delta,\lambda,\mu,\gamma)$, we conclude 
\[
T_{2}\le\delta+\P(lR<C_{1}\sigma).
\]

We bound $T_{3}+\P(lR<C_{1}\sigma)$ in the same line of arguments, having now
\begin{align*}
T_{3}+\P(lR<C_{1}\sigma) & =\P(R>\varepsilon)+\P(R<C_{1}\frac{\sigma}{l}).\\
 & \le\varepsilon^{-1}\E[R]+\P(R-\E[R|\,|\mathcal{X}|]<C_{1}\frac{\sigma}{l}-\E[R|\,|\mathcal{X}|])\\
 & \le\varepsilon^{-1}\E[R]+\E\Big[\frac{\Var(R|\,|\mathcal{X}|)}{(\E[R|\,|\mathcal{X}|]-C_{1}\frac{\sigma}{l})^{2}}\Big].
\end{align*}
Indeed, the computation below shows that $\E[R|\,|\mathcal{X}|]$ is of order $l/|\mathcal X| \approx \kappa_n \sqrt{\sigma/n}$ with $|\mathcal X|$ of order $n$ while $C_{1}\frac{\sigma}{l}$ is of order $C_1 \kappa_n^{-1}\sqrt{\sigma/n}$. Having $\kappa_n$ slowly diverging guarantees that the event has vanishing probability, even for arbitrarily big (but fixed) $C_1$. More precisely,
note first that $\P(|\mathcal{X}|=0)\to0$ as $n\to\infty$. For $n_{x}\ge1$
Then, we explicitly compute 
\begin{align*}
\E[R|\,|\mathcal{X}|=n_{x}] & =\E\Big[X_{(l+1)}-\bar{X}_{(l)}|\,|\mathcal{X}|=n_{x}\Big]\\
 & =\frac{l+1}{n_{x}+1}-\frac{1}{l}\sum_{j=1}^{l}\frac{j}{n_{x}+1}\\
 & =\frac{l+1}{n_{x}+1}-\frac{l+1}{2(n_{x}+1)}\\
 & =\frac{l+1}{2(n_{x}+1)}\in\big(\frac{l}{4n_{x}},\frac{l}{n_{x}}\big).
\end{align*}
The properties of order statistics under the uniform distribution yield $\Cov(X_{(j)},X_{(k)}|\,|\mathcal{X}|=n_{x})=\frac{j(n_{x}-k+1)}{(n_{x}+1)^{2}(n_{x}+2)}\le\frac{l+1}{n_{x}^{2}}$
for $1\le j\le k\le l+1$. We infer
\begin{align*}
\Var(R|\,|\mathcal{X}|=n_{x}) & =\Var(X_{(l+1)}|\,|\mathcal{X}|=n_{x})  -\frac{2}{l}\sum_{j=1}^{l}\Cov(X_{(j)},X_{(l+1)}|\,|\mathcal{X}|=n_{x})\\
 & +\frac{1}{l^{2}}\sum_{j_{1},j_{2}=1}^{l}\Cov(X_{(j_{1})},X_{(j_{2})}|\,|\mathcal{X}|=n_{x})\lesssim \frac{l+1}{n_{x}^{2}}.
\end{align*}
Therefore,
\begin{align*}
T_{3}+\P(lR<C_{1}\sigma) & \lesssim\frac{l}{\varepsilon n}+\frac{l}{n^{2}}\Big(\frac{l}{8n}-C_{1}\frac{\sigma}{l}\Big)^{-2}\\
 & =\frac{1}{C}+\frac{1}{l}\Big(\frac{1}{8}-C_{1}\frac{n\sigma}{l^2}\Big)^{-2}
 \lesssim\frac{1}{C}+O\Big(\frac{1}{\kappa_{n}\sqrt{\sigma n}}\Big).
\end{align*}
This upper bound is arbitrary small for sufficiently large $C$ and
$n$. 
\end{proof}

\begin{proof}[Proof of Proposition~\ref{prop:sigma2}]
We decompose for some arbitrary $c\in(0,1)$ and $\epsilon\in(0,1/2)$
\begin{align*}
\P\big(\big|\frac{\hat{\sigma}^{(2)}}{\sigma}-1\big|\ge\epsilon\big)
&=\P\big(\big|\hat{\sigma}^{(2)}-\sigma\big|\ge\sigma\epsilon\big)\\
&\le  \P\big(\hat{\sigma}^{(2)}-\sigma\ge\sigma\epsilon\big)+\P\big(\sigma-\hat{\sigma}^{(2)}\ge\sigma\epsilon\big)\\
&\le  \P\big(\hat{\sigma}^{(2)}\ge\sigma(1+\epsilon)\big)+\P\big(\sigma(1-\epsilon)\ge\hat{\sigma}^{(2)}\ge c\sigma\big)+\P\big(\hat{\sigma}^{(2)}< c\sigma\big)\\
&=:P_{1}+P_{2}+P_3.
\end{align*}
In the following we will prove that all three probabilities tend to zero. To this end, note that we can write $\hat{\sigma}^{(2)}$ as
\[
\hat{\sigma}^{(2)}=\min\Big\{ h>0:{\E[\psi^\dagger(\sigma D_1/h)]}+\xi(h)\ge{\psi^\dagger(0)}-\sqrt{nh^{2}+n^{-1}}\kappa_{n}\Big\}.
\]

For the first term $P_{1}$, we set $h^{\circ}=\sigma$. {Since $\psi^\dagger$ is constant on the support of $D_1$, we have $\E[\psi^\dagger(\sigma D_1/h^\circ)]=\E[\psi^\dagger(D_1)]=\psi^\dagger(0)$.} Therefore,
\begin{align*}
P_{1}\le\P(\hat{\sigma}^{(2)}>h^{\circ}) 
& \le\P\big(\xi(h^{\circ})+{\E[\psi^\dagger(\sigma D_1/h^\circ)]}<\psi^\dagger(0)-\sqrt{n(h^{\circ})^{2}+n^{-1}}\kappa_{n}\big)\\
 & =\P\big(-\xi(h^{\circ})>\sqrt{n(h^{\circ})^{2}+n^{-1}}\kappa_{n}\big)\\
 & \le\P(\Xi^{c})\to0
\end{align*}
with the good event $\Xi$ from Lemma~\ref{lem:AuxXi}.

To bound the second probability $P_{2}$, we set $h^{*}=\sigma(1-\varepsilon)$. 
 Since 
$h \mapsto \E[\psi^\dagger(\sigma D_1/h)]+\sqrt{n(h)^{2}+n^{-1}}\kappa_{n}$ is non-decreasing, on the event  $\Xi\cap\{h^{*}\ge\hat{\sigma}^{(2)}>c\sigma\}$ for some arbitrary $c \in (0,1)$, we have
\begin{align*}
{\E[\psi^\dagger(\sigma D_1/h^*)]}+\sqrt{n(h^{*})^{2}+n^{-1}}\kappa_{n} & \ge{\E[\psi^\dagger(\sigma D_1/\hat \sigma^{(2)})]}+\sqrt{n(\hat{\sigma}^{(2)})^{2}+n^{-1}}\kappa_{n}\\
 & \ge{\E[\psi^\dagger(\sigma D_1/\hat\sigma^{(2)})]}+\xi(\hat{\sigma}^{(2)})\\
 & ={\psi^\dagger(0)}-\sqrt{n(\hat{\sigma}^{(2)})^{2}+n^{-1}}\kappa_{n}\\
 & \ge{\psi^\dagger(0)}-\sqrt{n(h^{*})^{2}+n^{-1}}\kappa_{n},
\end{align*}
 where we used Lemma \ref{lem:AuxXi} for the second line with the constant $c$ chosen accordingly and using the fact that $\kappa_n \rightarrow \infty$.
Since $\supp f\subset[-1/2,1/2]$ and $f$ is bounded from below by \eqref{eq:boundaryCondition}, using the specific choice of $\psi^\dagger$, 
we conclude, on the event $\Xi\cap\{h^{*}\ge\hat{\sigma}^{(2)}\ge c\sigma\}$:
\begin{align*}
2\sqrt{n(h^{*})^{2}+n^{-1}}\kappa_{n}
&\ge\E\big[\psi^\dagger(0)-\psi^\dagger\big(\sigma D_1/\sigma(1-\varepsilon)\big)\big]\;\;\text{since}\;\;h^*=\sigma(1-\varepsilon)\\
&\ge\min_{|x|\le1/2}f(x)\int_{-1/2}^{1/2}\Big(\psi^\dagger(0)-\psi^\dagger\big(\frac{x}{1-\epsilon}\big)\Big)dx\\
&=(1-\epsilon)\min_{|x|\le1/2}f(x)\int_{0}^{\epsilon/2(1-\epsilon)}\Big(\psi^\dagger(0)-\psi^\dagger(x+1/2)\Big)dx\\
&\ge \frac{C^\dagger \log 2}{4} \min_{|x|\le1/2}f(x) \frac{\varepsilon}{\log (4\varepsilon^{-1})}\\
&\ge \frac{C^\dagger\log2}{12}\min_{|x|\le1/2}f(x) \frac{\varepsilon}{\log (\varepsilon^{-1})}
\end{align*}
Hence, for 
\begin{align*}
\varepsilon/\log( \varepsilon^{-1}) & =\frac{12}{C^\dagger \log 2}(\min_{|x|\le1/2}f(x))^{-1}\sqrt{n\sigma^{2}+n^{-1}}\kappa_{n} \\
& >\frac{12}{C^\dagger \log 2}(\min_{|x|\le1/2}f(x))^{-1}\sqrt{n(h^{*})^{2}+n^{-1}}\kappa_{n}
\end{align*}
we have
\begin{align*}
P_{2} & \le\P(\Xi^{c})\to0.
\end{align*}
Setting $\alpha(\varepsilon) := \varepsilon /\log (\varepsilon^{-1})$, the rate of convergence is given by $\alpha^{-1}(\sqrt{n\sigma^{2}+n^{-1}}\kappa_{n})$. From the fact that $\alpha$ is non-decreasing for $\varepsilon\in(0,1)$, we deduce $\alpha^{-1}(\varepsilon) \lesssim \varepsilon (\log (1/\varepsilon))^{1+a}$ for arbitrarily small $a$ as $\varepsilon\downarrow0$. To deduce the claimed rate, it suffices to note that
\[
  \kappa_{n}\sqrt{n\sigma^{2}+n^{-1}}\big|\log (\kappa_{n}\sqrt{n\sigma^{2}+n^{-1}})\big|^{1+a}\lesssim \kappa_n(\log n)^{1+a}\sqrt{n\sigma^{2}+n^{-1}}\le (\log n)^{2}\sqrt{n\sigma^{2}+n^{-1}}
\]
for $\kappa_n=\sqrt{\log n}$ and $n\sigma^{3/2}\to0$.\\ 

It remains to prove $P_3\to0$ { where we choose $c=1/2$}. 
Since $h \mapsto \psi^\dagger(x/h)$ is non-decreasing, so is $h\mapsto\frac{1}{\mu\lambda n}\sum_{i,j}\psi^\dagger\big((Y_{j}-X_{i})/h\big)$. Therefore, on the event $\{\bar h\ge\hat{\sigma}^{(2)}>\underline h\}$ for any $0\le\underline h<\bar h<\sigma$ we have that
\begin{align*}
 \frac{1}{\mu\lambda n}\sum_{i,j}\psi^\dagger\big((Y_{j}-X_{i})/\bar h\big)
 &\ge \frac{1}{\mu\lambda n}\sum_{i,j}\psi^\dagger\big((Y_{j}-X_{i})/\hat\sigma^{(2)}\big)\\
 &= n\lambda \hat{\sigma}^{(2)}+{\psi^\dagger(0)}-\sqrt{n(\hat{\sigma}^{(2)})^{2}+n^{-1}}\kappa_{n}\\
 &\ge n\lambda \underline h+{\psi^\dagger(0)}-\sqrt{n\sigma^2+n^{-1}}\kappa_n.
\end{align*}
Therefore, assuming $\kappa_n$ is such that $\sqrt{n\sigma^{2}+n^{-1}}\kappa_{n}=o(1)$, a choice which is always possible, and under the condition $n\lambda(\bar h-\underline h)<{\psi^\dagger(0)-\E[\psi^\dagger(\sigma D_1/\bar h)]}$, Markov's inequality yields 
\begin{align}
\P\big(\bar h\ge\hat{\sigma}^{(2)}>\underline h\big)
& \le \P\Big(\frac{1}{\mu\lambda n}\sum_{i,j}\psi^\dagger\big((Y_{j}-X_{i})/\bar h\big)\ge n\lambda \underline h+{\psi^\dagger(0)}-o(1)\Big)\notag\\
& = \P\Big(\xi(\bar h)\ge n\lambda (\underline h-\bar h)+{\psi^\dagger(0)}-{\E[\psi^\dagger(\sigma D_1/\bar h)]}-o(1)\Big)\notag\\
& \le \P\Big(\xi(\bar h)\ge {\psi^\dagger(0)-\E[\psi^\dagger(\sigma D_1/\bar h)]}-n\lambda (\bar h-\underline h)-o(1)\Big)\notag\\
 & \le\frac{\Var(\xi(\bar h))}{\big({\psi^\dagger(0)-\E[\psi^\dagger(\sigma D_1/\bar h)]}-n\lambda(\bar h -\underline h)-\E[\xi(\bar h)]-o(1)\big)^{2}}\notag\\
 & \le\frac{n\bar h^2+n^{-1}}{\big({\psi^\dagger(0)-\E[\psi^\dagger(\sigma D_1/\bar h)]}-n\lambda(\bar h -\underline h)+o(1)\big)^{2}},\label{eq:boundSigmaHat} 
\end{align}
where we used \eqref{eq:xiVar} for the last estimate.
In the case $n\sigma\le c_1:= {\E[\psi^\dagger(0)-\psi^\dagger(2D_1)]/\lambda}$, we can choose $\underline h=0,\bar h=\sigma/2$ and conclude
\[
  \P\Big(\hat \sigma^{(2)}\le \frac{\sigma}{2}\Big)
  \lesssim  \frac{n\sigma^2+n^{-1}}{\big({\E[\psi^\dagger(0)-\psi^\dagger(2D_1)]}-n\sigma\lambda/2+o(1)\big)^{2}}\to 0.
\]
If $n\sigma> c_1$, we first note that \eqref{eq:boundSigmaHat} with $\underline h=0$, {$\bar h=\frac{c_1}{2n}$} yields 
\begin{align*}
  \P\Big(\hat \sigma^{(2)}\le \frac{c_1}{2n}\Big)
  &\lesssim  \frac{n^{-1}}{\big({\E[\psi^\dagger(0)-\psi^\dagger(2n\sigma D_1/c_1)]}-c_1\lambda/2+o(1)\big)^{2}}\\
  &\le  \frac{n^{-1}}{\big({\frac12\E[\psi^\dagger(0)-\psi^\dagger(4D_1)]+o(1)}\big)^{2}}\to 0.
\end{align*}
To improve this bound in the case $\sigma>\frac1n$, we choose $h_i:=\frac{c_1}{2n}+\frac{c_2}{2}\frac in\in[\frac{c_1}{2n},\frac{c_2}{2n}+\frac{\sigma}{2}]$ for $c_2:=\min\{\frac12,\E[\psi^\dagger(0)-\psi^\dagger(4 D_1/3)]/\lambda\}$ and $0\le i\le I:= \lceil \frac{n}{c_2}\big(\sigma-\frac{c_1}{n}\big)\rceil= O(\sigma n)$ and estimate using \eqref{eq:boundSigmaHat}
\begin{align*}
  \P\Big(\hat \sigma^{(2)} \le \frac\sigma2\Big)&\le\P\Big(\hat \sigma^{(2)}\le\frac{c_1}{2n}\Big)+\sum_{i=1}^I\P\big(h_{i-1}<\hat \sigma^{(2)}\le h_i\big)\\
  &\le \sum_{i=1}^I \frac{n h_i^2+n^{-1}}{\big({\E[\psi^\dagger(0)-\psi^\dagger(\sigma D_1/h_i)]}-n\lambda(h_i - h_{i-1})+o(1)\big)^{2}}+ o(1)\\
  &\le \sum_{i=1}^I \frac{n h_i^2+n^{-1}}{\big({\E[\psi^\dagger(0)-\psi^\dagger(\sigma D_1/(\frac{c_2}{2n}+\frac{\sigma}{2}))]-c_2}\lambda/2+o(1)\big)^{2}}+ o(1)\\
  &\le \sum_{i=1}^I \frac{n h_i^2+n^{-1}}{\big({\E[\psi^\dagger(0)-\psi^\dagger(4 D_1/3)]}-c_2\lambda/2+o(1)\big)^{2}}+ o(1)\\
  &\lesssim \frac{I}{n}+\frac{1}{n}\sum_{i=1}^Ii^2+ o(1)\\
  &\lesssim \frac{I}{n}+\frac{I^3}{n}+ o(1)
  \lesssim \sigma +n^2\sigma^3.\qedhere
\end{align*}
\end{proof}

\begin{lem}
\label{lem:AuxXi}Let $c\in(0,1)$ and $\sqrt{n}\sigma=O(1)$. For every $\varepsilon>0$ there exists $\kappa>0$ such that 
\begin{align}
\P(\Xi^{c})\le\varepsilon\qquad\text{with }\quad\Xi:=\Big\{\sup_{h\in[c\sigma,\sigma]}\frac{|\xi(h)|}{\sqrt{nh^{2}+n^{-1}}}\le\kappa\Big\}.\label{eq:Xi}
\end{align}
\end{lem}

\begin{proof}
\emph{Step 1: We first bound $\E[\xi(h)]$.} For $D_1\sim f$ and
$Y\sim f_{\sigma}\ast p$ Corollary~\ref{cor: intensity} yields
\begin{align*}
\E\Big[\frac{1}{\mu\lambda n}\sum_{i,j}\psi^\dagger((Y_{j}-X_{i})/h)\Big] & =\int\psi^\dagger(z/h)f_{\sigma}(z)\d z+n\lambda\int_{0}^{1}\int\psi^\dagger(\frac{y-x}{h})(f_{\sigma}\ast p)(y)\d y\d x\\
 & =\E[\psi^\dagger(\sigma D_1/h)]+n\lambda h\int_{0}^{1}\E[\psi^\dagger_{h}(Y-x)]\d x\\
 & =\E[\psi^\dagger(\sigma D_1/h)]+n\lambda h-n\lambda h\int_{\R\setminus[0,1]}\E[\psi^\dagger_{h}(Y-x)]\d x,
\end{align*}
using $\int_{\R}\E[\psi^\dagger_{h}(Y-x)]\d x=\int\psi^\dagger_{h}(x)\d x=1$. Since
and $\psi_h^\dagger \le h^{-1}\|\psi^\dagger\|_\infty \eins_{[-1, 1]}$, $\supp(f_\sigma\ast p)=[-\frac{\sigma}{2},1+\frac{\sigma}{2}]$ and  $\|f_{\sigma}\ast p\|_{\infty}\le1$, we successively have 
\begin{align*}
\int_{\R\setminus[0,1]}\E[\psi^\dagger_{h}(Y-x)]\d x & =\E\Big[\int_{(-\infty,Y-1]\cup[Y,\infty)}\psi^\dagger_{h}(x)\d x\Big]\\
 & \le \frac1h\|\psi^\dagger\|_{\infty}\E\big[\eins_{\{-h<Y-1\}}\big(h\wedge(Y-1)+h\big)+\eins_{\{Y<h\}}\big(h-(-h\vee Y)\big)\big]\\
 & \le2\|\psi^\dagger\|_{\infty}\P(\{Y>1-h\}\cup\{Y<h\})\\
 & \le2\|\psi^\dagger\|_{\infty}(2h+\sigma).
\end{align*}
Therefore,
\[
\E[\xi(h)]=\E\Big[\frac{1}{\mu\lambda n}\sum_{i,j}\psi^\dagger(Y_{j}-X_{i})\Big]{-\E\big[\psi^\dagger(\sigma D_1/h)\big]}-n\lambda h=O\big(nh(h+\sigma)\big).
\]
In particular for all $h\in(0,\sigma)$
\[
\frac{|\E[\xi(h)]|}{\sqrt{nh^{2}+n^{-1}}}\lesssim \frac{nh(\sigma+h)}{\sqrt{n}h} = O(\sqrt{n}\sigma)
\]
which is uniformly bounded. 
\emph{Step 2: It remains to prove the tightness of 
\[
\sup_{h\in[c\sigma,\sigma]}\frac{|\xi(h)-\E[\xi(h)]|}{\sqrt{nh^{2}+n^{-1}}}.
\]
}To this end, we apply the Kolmogorov-Chentsov criterion to the process
\[
V_{t}:=\frac{\xi(h_{t})-\E[\xi(h_{t})]}{\sqrt{nh_{t}^{2}+n^{-1}}},\qquad h_{t}:=t\sigma,t\in(c,1].
\]
First, due to (\ref{eq:xiVar}), we have $\E[V_{c}^2] \lesssim 1$. Next, for $0<s<t\le1$
we decompose increments into
\begin{align*}
V_{t}-V_{s} & =\frac{1}{n\lambda\mu}\sum_{i,j}\big(\Delta_{s,t}(Y_{j}-X_{i})-\E[\Delta_{s,t}(Y_{j}-X_{i})]\big),\\
\Delta_{s,t}(z) & :=\frac{1}{\sqrt{nh_{t}^{2}+n^{-1}}}\psi^\dagger(z/h_{t})-\frac{1}{\sqrt{nh_{s}^{2}+n^{-1}}}\psi^\dagger(z/h_{s}).
\end{align*}
Proposition~\ref{prop:variance} with $\psi_{2}=\Delta_{s,t}(\sigma\cdot)$ and $\psi_{1}=\eins_{[-2,2]}$
yields 
\begin{align}
\Var\Big(\frac{1}{n\lambda\mu}\sum_{i,j}\big(\Delta_{s,t}(Y_{j}-X_{i})\Big) & \lesssim\frac{1}{n}\big((n\sigma+n^{2}\sigma^{2})\sigma^{-2}\|\Delta_{s,t}\|_{L^{1}}^{2}+(n\sigma+1)\sigma^{-1}\|\Delta_{s,t}\|_{L^{2}}^{2}\big)\nonumber \\
 & =(\sigma^{-1}+n)\|\Delta_{s,t}\|_{L^{1}}^{2}+(1+(n\sigma)^{-1})\|\Delta_{s,t}\|_{L^{2}}^{2}.\label{eq:VarIncr}
\end{align}
We can bound the above $L^{1}$-norm by
\begin{align*}
\|\Delta_{s,t}\|_{L^{1}}^{2} & \le2\Big(\frac{1}{\sqrt{nh_{t}^{2}+n^{-1}}}-\frac{1}{\sqrt{nh_{s}^{2}+n^{-1}}}\Big)^{2}\Big(\int|\psi^\dagger(z/h_{t})|\d z\Big)^{2}\\
 & \qquad+\frac{2}{nh_{s}^{2}+n^{-1}}\Big(\int|\psi^\dagger(z/h_{t})-\psi^\dagger(z/h_{s})|\d z\Big)^{2}
  =2T_{1}^{2}+2T_{2}^{2}.
\end{align*}
Using $\frac{1}{\sqrt{a}}-\frac{1}{\sqrt{b}}=\frac{b-a}{\sqrt{ab}(\sqrt{a}+\sqrt{b})}\le\frac{b-a}{\sqrt{a}b}$, $a+b\ge 2\sqrt{ab}$
and $s\le t$, we estimate
\begin{align*}
(\sigma^{-1}+n)T_{1}^{2} & \le(\sigma^{-1}+n)\frac{n^2h_{t}^{2}(h_{t}^{2}-h_{s}^{2})^{2}}{(nh_{t}^{2}+n^{-1})^{2}(nh_{s}^{2}+n^{-1})}\Big(\int|\psi^\dagger(z)|\d z\Big)^{2}\\
 & \lesssim(\sigma^{-1}+n)\frac{n^{2}\sigma^{6}(t^{2}-s^{2})^{2}}{(n\sigma^{2}t^{2}+n^{-1})^{2}(n\sigma^{2}s^{2}+n^{-1})}\\
 & \lesssim \frac{n^{2}\sigma^{5}(t-s)^{2}(t+s)^{2}}{n^{2}\sigma^{5}t^{3}s^{2}}
 +\frac{n^{3}\sigma^{6}(t-s)^{2}(t+s)^{2}}{n^{3}\sigma^{6}t^{4}s^{2}}
  \lesssim c^{-4}(t-s)^{2}
\end{align*}
 For $T_{2}$ the mean value theorem yields
\begin{align*}
(\sigma^{-1}+n)T_{2}^{2} & \le\frac{\sigma^{-1}+n}{nh_{s}^{2}+n^{-1}}\sup_{r\in[s,t]}\Big(\int\Big|\frac{z}{h_{t}}-\frac{z}{h_{s}}\Big|\big|(\psi^\dagger)'\big(\frac{z}{h_{r}}\big)\big|\d z\Big)^{2}\\
 & =\frac{\sigma^{-1}+n}{nh_{s}^{2}+n^{-1}}\sup_{r\in[s,t]}h_{r}^{2}\Big(\frac{h_{r}}{h_{s}}-\frac{h_{r}}{h_{t}}\Big)^{2}\Big(\int|z|\big|(\psi^\dagger)'\big(z\big)\big|\d z\Big)^{2}\\
 & \lesssim\frac{\sigma+n\sigma^{2}}{n\sigma^{2}s^2+n^{-1}}\sup_{r\in[s,t]}r^{2}\Big(\frac{r}{s}-\frac{r}{t}\Big)^{2}\\
 & \lesssim\frac{t^{4}}{s^2}\Big(\frac{t-s}{ts}\Big)^{2}
  \le c^{-4}(t-s)^{2}.
\end{align*}
Similarly we proceed with the $L^{2}$-norm in (\ref{eq:VarIncr}):
\begin{align*}
\|\Delta_{s,t}\|_{L^{2}}^{2} & \le2\Big(\frac{1}{\sqrt{nh_{t}^{2}+n^{-1}}}-\frac{1}{\sqrt{nh_{s}^{2}+n^{-1}}}\Big)^{2}\int|\psi^\dagger(z/h_{t})|^{2}\d z\\
 & \qquad+\frac{2}{nh_{s}^{2}+n^{-1}}\int|\psi^\dagger(z/h_{t})-\psi^\dagger(z/h_{s})|^{2}\d z
  =:2S_{1}^{2}+2S_{2}^{2}.
\end{align*}
with
\begin{align*}
(1+(n\sigma)^{-1})S_{1}^{2} & \lesssim\Big(1+\frac{1}{n\sigma}\Big)h_{t}\Big(\frac{1}{\sqrt{nh_{t}^{2}+n^{-1}}}-\frac{1}{\sqrt{nh_{s}^{2}+n^{-1}}}\Big)^{2}\\
 & \le\Big(1+\frac{1}{n\sigma}\Big)\frac{n^{2}h_t(h_{t}^{2}-h_{s}^{2})^{2}}{(nh_{t}^{2}+n^{-1})^2(nh_{s}^{2}+n^{-1})}\\
 & \lesssim\Big(1+\frac{1}{n\sigma}\Big)\frac{n^{2}\sigma^4(t+s)^{2}(t-s)^{2}}{(n\sigma^2{t}^{2}+n^{-1})(n\sigma^2{s}^{2}+n^{-1})}\\
 & \lesssim\frac{n^{2}\sigma^{4}(t+s)^{2}}{n^{2}\sigma^{4}t^{2}s^{2}}(t-s)^{2}+\frac{n\sigma^{3}(t+s)^{2}}{n\sigma^{3}t^{2}s^2}(t-s)^{2}
  \lesssim c^{-2}(t-s)^{2}
\end{align*}
and
\begin{align*}
(1+(n\sigma)^{-1})S_{2}^{2} & \lesssim\frac{1+(n\sigma)^{-1}}{nh_{s}^{2}+n^{-1}}\sup_{r\in[s,t]}\int\Big|\frac{z}{h_{t}}-\frac{z}{h_{s}}\Big|^{2}\big|(\psi^\dagger)'\big(\frac{z}{h_{r}}\big)\big|^{2}\d z\\
 & \lesssim\frac{1+(n\sigma)^{-1}}{nh_{s}^{2}+n^{-1}}\sup_{r\in[s,t]}h_{r}\Big(\frac{h_{r}}{h_{s}}-\frac{h_{r}}{h_{t}}\Big)^{2}
  \lesssim\frac{\sigma+n^{-1}}{n\sigma^{2}s^{2}+n^{-1}}\frac{t^3}{t^2s^2}(t-s)^{2}\lesssim c^{-3}(t-s)^{2}.
\end{align*}
These calculations verify
\[
\E\big[(V_{t}-V_{s})^{2}\big]=\Var\Big(\frac{1}{n\lambda\mu}\sum_{i,j}\big(\Delta_{s,t}(Y_{j}-X_{i})\Big)\lesssim(t-s)^{2}.
\]
Therefore, we may apply the Kolmogorov-Chentsov criterion and $(V_{t})$ has an $\alpha$-H\"older regular modification for
any $\alpha\in(0,1/2)$ implying tightness. 
\end{proof}

\begin{proof}[Proof of Theorem \ref{thm:sigma}]
  We distinguish different regimes for $\sigma$:
  \begin{enumerate}
   \item If $\sigma\ge2n^{-2/3}/\log n$, then for $C>0$
   \begin{align*}
      \P\Big(\Big|\frac{\hat\sigma}{\sigma}-1\Big|\ge C\frac{(\log n)^2}{\sqrt{n\sigma}}\Big)
      \le&\P\Big(\Big|\frac{\hat\sigma^{(1)}}{\sigma}-1\Big|\ge C\frac{(\log n)^2}{\sqrt{n\sigma}}\Big)
      +\P(\hat T<\kappa_n)+\P\Big(\hat\sigma^{(1)}\le \frac{n^{-2/3}}{\log n}\Big)\\
      \le&\P\Big(\Big|\frac{\hat\sigma^{(1)}}{\sigma}-1\Big|\ge C\frac{(\log n)^2}{\sqrt{n\sigma}}\Big)+\P(\hat T<\kappa_n)+\P\big(\hat\sigma^{(1)}\le \frac\sigma 2\big)\\
      \le&\P\Big(\Big|\frac{\hat\sigma^{(1)}}{\sigma}-1\Big|\ge C\frac{(\log n)^2}{\sqrt{n\sigma}}\Big)+\P(\hat T<\kappa_n)+\P\Big(\frac{\sigma-\hat\sigma^{(1)}}\sigma \ge \frac1 2\Big),
   \end{align*}
   where the first and the last term are bounded by Proposition~\ref{prop:estSigma} while the middle term is $o(1)$ due to \eqref{eq:T}.
   \item If $\frac12n^{-2/3}/\log n \le \sigma< 2n^{-2/3}/\log n$, then $\sqrt{n\sigma^2+n^{-1}}$ is of order $(\log n)^{-1}n^{-1/6}$ and $(n\sigma)^{-1/2}$ is of order $(\log n)^{1/2}n^{-1/6}$. Therefore, 
      \begin{align*}
        &\P\Big(\Big|\frac{\hat\sigma}{\sigma}-1\Big|\ge C(\log n)^2\big(\sqrt{n\sigma^{2}+n^{-1}}\wedge\frac{1}{\sqrt{n\sigma}}\big)\Big)\\
        &\qquad\le\P\Big(\Big|\frac{\hat\sigma^{(1)}}{\sigma}-1\Big|\ge C'\frac{(\log n)^{1/2}}{\sqrt{n\sigma}}\Big)+\P\Big(\Big|\frac{\hat\sigma^{(2)}}{\sigma}-1\Big|\ge C''(\log n)^2\sqrt{n\sigma^2+n^{-1}}\Big).
      \end{align*}
      These terms can be bounded by Propositions~\ref{prop:estSigma} and \ref{prop:sigma2} noting that $n\sigma^{3/2}\to 0$ in this case.
   \item If $\sigma\le\frac12n^{-2/3}/\log n$, then 
   \begin{align}
      \P\Big(\Big|\frac{\hat\sigma}{\sigma}-1\Big|\ge C(\log n)^2\sqrt{n\sigma^{2}+n^{-1}}\Big)
      \le&\P\Big(\Big|\frac{\hat\sigma^{(2)}}{\sigma}-1\Big|\ge C(\log n)^2\sqrt{n\sigma^{2}+n^{-1}}\Big)\notag\\
      &\qquad +\P(\hat T\ge \kappa_n, \hat\sigma^{(1)}> \frac{n^{-2/3}}{\log n}\Big).\label{eq:finalproof}
    \end{align}
    The first term can again be treated with Proposition~\ref{prop:sigma2}. For $\sigma >\frac{\kappa_n^{1/2}}n$ the second term is bounded by
    \[
      \P\Big(\hat \sigma^{(1)}>\frac{n^{-2/3}}{\log n}\Big)
      \le\P\big(\hat \sigma^{(1)}>2\sigma \big)
      \le \P\Big(\frac{\hat\sigma^{(1)}}{\sigma}-1>1\Big),
    \]
    which converges to zero by Proposition~\ref{prop:estSigma} due to $\sigma n>\kappa_n^{1/2}\to\infty$. If on the other hand $\sigma\le\frac{\kappa _n^{1/2}}{n}$, then the probability in \eqref{eq:finalproof} can be bounded by
    \[
      \P(\hat T\ge \kappa_n)\le\frac{\Var(\hat T)}{(\kappa_n-\E[\hat T])^2}\lesssim \frac{n\sigma}{(\kappa_n-n\sigma\lambda \mu I_f)^2}\lesssim \kappa_n^{-3/2}\to \infty
    \]
    thanks to Lemma~\ref{lem:EstNSigma}.\qedhere
  \end{enumerate}
\end{proof}

\subsection{Proofs for the plug-in estimators}

\begin{proof}[Proof of Theorem \ref{prop:plugIn}]
\emph{(i)} We analyse the deconvolution estimator in four steps.

\emph{Step 1: Prefactor. }Defining
\[
  \bar f_{\hat \sigma}^{(1)}(x_0):=\frac{1}{\hat \sigma\hat h_1^2 \lambda \mu n}\sum_{j}K'\Big(\frac{z_0}{\hat h_{1}}-\frac{Y_{j}}{\hat\sigma \hat h_{1}}\Big)\Big(\frac{1}{n\lambda}\sum_iK\Big(\frac{\hat \sigma z_0}{9}-\frac{Y_{j}-X_i}{9}\Big)\Big),
\]
we have
\begin{align*}
\tilde{f}_{\hat\sigma}^{(1)}(z_{0})-f(z_{0}) & =\frac{\lambda\mu n}{|\mathcal{Y}|}\frac{\lambda n}{|\mathcal X|}\big(\bar f_{\hat \sigma}^{(1)}(z_0)-f(z_{0})\big)+\Big(\frac{\lambda\mu n}{|\mathcal{Y}|}\frac{\lambda n}{|\mathcal X|}-1\Big)f(z_{0}).
\end{align*}
For $\tau_n\to\infty$ the event
\begin{equation*}
\Lambda:=\Big\{\Big|\frac{|\mathcal{Y}|}{\lambda\mu n}-1\Big|\le\frac{\tau_n}{\sqrt n}\Big\}\cup\Big\{\Big|\frac{|\mathcal{X}|}{\lambda n}-1\Big|\le\frac{\tau_n}{\sqrt n}\Big\}
\end{equation*}
satisfies
\begin{align*}
\P(\Lambda^{c}) & \le\P\Big(\Big|\frac{|\mathcal{Y}|}{\lambda\mu n}-1\Big|>\frac{\tau_n}{\sqrt{n}}\Big)+\P\Big(\Big|\frac{|\mathcal{X}|}{\lambda n}-1\Big|>\frac{\tau_n}{\sqrt{n}}\Big)\\
 & \le\frac{n}{\tau_n^{2}(\lambda\mu n)^{2}}\E\big[\Var\big(|\mathcal Y|\,\big|\,|\mathcal X|\big)\big]+\frac{n}{\tau_n^{2}(\lambda n)^{2}}\Var(|\mathcal X|)\\
 & =\frac{1}{\tau_n^{2}(\lambda\mu)^2n}\E[\mu|\mathcal X|]+\frac{1}{\tau_n^{2}\lambda}\\
 & =\frac{1}{\tau_n^{2}\lambda\mu}+\frac{1}{\tau_n^{2}\lambda}\to0,
\end{align*}
due to $|\mathcal{Y}|\big||\mathcal X|\sim\mathrm{Poiss}(\mu |\mathcal X|)$ and $|\mathcal{X}|\sim\mathrm{Poiss}(\lambda n)$. On $\Lambda$
we have for $\tau_n/\sqrt n\le 1/2$ that
\begin{align*}
  \frac{\lambda\mu n}{|\mathcal{Y}|}\frac{\lambda n}{|\mathcal X|}-1&=\Big(\frac{\lambda\mu n}{|\mathcal{Y}|}-1\Big)\frac{\lambda n}{|\mathcal X|}+\frac{\lambda n}{|\mathcal X|}-1\\
  &=\Big(\frac{1-|\mathcal{Y}|/(\lambda\mu n)}{1-(1-|\mathcal{Y}|/\lambda\mu n)}\Big)\frac{\lambda n}{|\mathcal X|}+\frac{1-|\mathcal{X}|/(\lambda n)}{1-(1-|\mathcal{X}|/\lambda n)}
  \le 6\tau_n/\sqrt n.
\end{align*}
Since $r_n$ is always slower than $n^{-1/2}$, we conclude
\[
r_{n}^{-1}\big|\tilde{f}_{\hat \sigma}^{(1)}(z_{0})-f(z_{0})\big|=O_{\P}\Big(r_{n}^{-1}\big|\bar{f}^{(1)}_{\hat \sigma}(z_{0})-f(z_{0})\big|\Big)+o_{\P}(1).
\]

\emph{Step 2: From $\hat{\sigma}$ to $\sigma$. }Consider the event
\[
\Sigma:=\big\{\hat{\sigma}\in[\sigma(1-\epsilon_n),\sigma(1+\epsilon_n)]\big\},\qquad\epsilon_n=\frac{\log n}{\sqrt{\sigma n}}
\]
satisfying
\begin{align*}
\P(\Sigma^{c})=\P(|\hat{\sigma}-\sigma|>\epsilon_n\sigma) & =\P\big(|\frac{\hat{\sigma}}{\sigma}-1|>\frac{\log  n}{\sqrt{\sigma n}}\big)\to0
\end{align*}
due to Theorem~\ref{thm:sigma}. Writing $\bar{h}_1=(n\bar{\sigma})^{-1/(2s+3)}$
for any $\bar{\sigma}>0$, we have on $\Sigma$ 
\begin{align}
 & r_{n}^{-1}\big|\bar{f}_{\hat \sigma}^{(1)}(z_{0})-f(z_{0})\big|\le\sup_{\bar{\sigma}:|\bar{\sigma}-\sigma|\le\epsilon_n\sigma}r_{n}^{-1}\Big|\bar f_{\bar\sigma}^{(1)}(z_{0})-\E[\bar f_{\bar\sigma}^{(1)}(z_{0})]-\big(f(z_{0})-\E[\bar f_{\bar\sigma}^{(1)}(z_{0})]\big)\Big|\nonumber \\
 & \qquad\le\sup_{\bar{\sigma}:|\bar{\sigma}-\sigma|\le\epsilon_n\sigma}\frac{\sqrt{\bar{\sigma}n\bar{h}_1^{3}}}{r_{n}\sqrt{\bar{\sigma}n\bar{h}_1^{3}}}\Big|\bar f_{\bar\sigma}^{(1)}(z_{0})-\E[\bar f_{\bar\sigma}^{(1)}(z_{0})]\Big|+\sup_{\bar{\sigma}:|\bar{\sigma}-\sigma|\le\epsilon_n\sigma}\frac{\bar{h}_1^{-s}}{r_{n}\bar{h}_1^{-s}}\Big|\big(f(z_{0})-\E[\bar f_{\bar\sigma}^{(1)}(z_{0})]\big)\Big|\nonumber \\
 & \qquad\lesssim\sup_{\bar{\sigma}:|\bar{\sigma}-\sigma|\le\epsilon_n\sigma}\sqrt{\bar{\sigma}n\bar{h}_1^{3}}\Big|\bar f_{\bar\sigma}^{(1)}(z_{0})-\E[\bar f_{\bar\sigma}^{(1)}(z_{0})]\Big|+\sup_{\bar{\sigma}:|\bar{\sigma}-\sigma|\le\epsilon_n\sigma}\bar{h}_1^{-s}\Big|\big(f(z_{0})-\E[\bar f_{\bar\sigma}^{(1)}(z_{0})]\big)\Big|\label{eq:plugInDecomp}
\end{align}
using in the last step that the minimax rate satisfies
$
r_{n}=(h_{1})^{s}=\big(n\sigma(h_{1})^{3}\big)^{-1/2}
$
and thus
\begin{align*}
\frac{1}{r_{n}^{2}\bar{\sigma}n\bar{h}_1^{3}} & =\frac{\sigma n(h_{1})^{3}}{\bar{\sigma}n\bar{h}_1^{3}}=\Big(\frac{\sigma}{\bar{\sigma}}\Big)^{2s/(2s+3)}\le(1-\epsilon_n)^{-2s/(2s+3)},\\
\frac{1}{r_{n}\bar{h}_1^{-s}} & =\Big(\frac{\bar{h}_1}{h_{1}}\Big)^{s}=\Big(\frac{\bar{\sigma}}{\sigma}\Big)^{s/(2s+3)}\le(1+\epsilon_n)^{s/(2s+3)}.
\end{align*}
Subsequently, we will bound both terms in \eqref{eq:plugInDecomp} separately. To this end, we proceed similarly to the proof of Proposition~\ref{thm:rates}(i). To incorporate $\bar\sigma$ we set
\begin{equation}\label{eq:psiBar}
  \bar \psi_1:=\frac{1}{\bar\sigma\bar h_1^2}K'\Big(\frac{z_0}{\bar h_1}-\frac{\cdot}{\bar\sigma\bar h_1}\Big)\qquad\text{and}\qquad
  \bar\psi_2:=\frac{1}{\bar h_2}K\Big(\frac{z_0}{\bar h_2}-\frac{\cdot}{\bar h_2}\Big).
\end{equation}
where $\bar h_2:=9/\bar\sigma$.

\emph{Step 3: Bias.} The analog to decomposition \eqref{eq:exp} leads to 
$$
\bar{\mathcal U}_\sigma(f \ast p) = \int_{\R}\bar\psi_1(y)(\bar\psi_2\ast\eins_{[0,1/\bar \sigma]})(y/\bar\sigma)(f_{\sigma}\ast p)(y)\d y,\quad
\bar{\mathcal V}_\sigma(f)=  \int_{\R}(\bar\psi_1\ast\eins_{[-1,0]})(\sigma z)\bar\psi_2(z)f(z)\d z
$$
where $\sigma$ is the true data-generating parameter. Along the lines of the proof of Propositions~\ref{prop:bias1}(i) and \ref{prop:bias2}(ii) we obtain
\[
  \E[\bar{\mathcal U}_\sigma(f\ast p)]=\frac{1}{\bar \sigma \bar h_2}f_{\sigma/\bar\sigma}(z_0)+O\big(\frac{h_1^s}{\sigma h_2}\big)\qquad\text{and}\qquad
  \E[\bar{\mathcal V}_\sigma(f)]\lesssim \bar h_1^{-1}.
\]
Note that 
\[f_{\sigma/\bar\sigma}(z_0)-f(z_0)=\frac{\bar \sigma}{\sigma}\big(f(\bar\sigma z_0/\sigma)-f(z_0)\big)+\big(\frac{\bar\sigma}{\sigma}-1\big)f(z_0)\lesssim \Big(\frac{\log(n\sigma)}{n\sigma}\Big)^{(1\wedge s)/2}.\]
Therefore, we obtain the following modification of \eqref{eq:biasf1}:
\begin{align*}
\E\Big[\bar{f}^{(1)}_{\bar\sigma}(z_0)\Big] & =\bar \sigma \bar h_2\,\bar{\mathcal U}_{\bar\sigma}(f\ast p)+\frac{\bar h_2}{n\lambda}\bar{\mathcal V}_{\bar \sigma}(f)\notag\\
&=f(z_0)+O\Big(h_{1}^{s}+\Big(\frac{\log(n\sigma)}{n\sigma}\Big)^{(1\wedge s)/2}+\frac{h_{2}}{nh_1}\Big).
\end{align*}
We conclude
\[
\sup_{\bar{\sigma}:|\bar{\sigma}-\sigma|\le\epsilon_n\sigma}\bar{h}_1^{-s}\Big|\big(f(z_{0})-\E[\bar f_{\bar\sigma}^{(1)}(z_{0})]\big)\Big|\lesssim 1+\Big(\frac{\log(\sigma n)}{\sigma n}\Big)^{(1\wedge s)/2}r_{n}^{-1}+\frac{r_n^{-1}}{n\sigma h_1}\lesssim 1.
\]

\emph{Step 4: Stochastic error term.} Recall that $\varepsilon_n = \frac{\log n}{\sqrt{\sigma n}}$ and define 
\[
\sigma_{t}:=\sigma(1-\epsilon_n+2\epsilon_n t),\qquad h_{t}:=(n\sigma_{t})^{-1/(2s+3)},\qquad t\in[0,1],
\]
as well as
\begin{align*}
V_{t} & :=\sqrt{\sigma_{t}nh_{t}^{3}}\big(\bar{f}_{\sigma_{t}}^{(1)}(z_{0})-\E[\bar{f}_{\sigma_{t}}^{(1)}(z_{0})]\big).
\end{align*}
We thus have to prove tightness of the process $(V_{t})_{t\in[0,1]}$
that is $
\sup_{t\in[0,1]}|V_{t}|=O_{\P}(1)$.
As in the proof of Proposition~\ref{prop:sigma2}, we apply the Kolmogorov-Chentsov criterion. Writing
\begin{align*}
V_{t}-V_{s}=&\frac{1}{(\lambda\mu\sqrt{n})(\lambda n)}\sum_{i,j}\big(\Delta^{(1)}_{s,t}(X_i,Y_{j})-\E[\Delta^{(1)}_{s,t}(X_i,Y_{j})]\big)\\
&+\frac{1}{(\lambda\mu\sqrt{n})(\lambda n)}\sum_{i,j}\big(\Delta^{(2)}_{s,t}(X_i,Y_{j})-\E[\Delta^{(2)}_{s,t}(X_i,Y_{j})]\big)
\end{align*}
with
\begin{align*}
  \Delta_{s,t}^{(1)}(x,y)=&\Big(\underbrace{\frac{1}{\sqrt{\sigma_th_t}}K'\Big(\frac{z_0}{h_{t}}-\frac{y}{\sigma_t h_{t}}\Big)-\frac{1}{\sqrt{\sigma_sh_s}}K'\Big(\frac{z_0}{h_{s}}-\frac{y}{\sigma_s h_{s}}\Big)}_{=:\Delta_{s,t}^{(1,1)}(y)}\Big)\underbrace{K\Big(\frac{\sigma_t z_0}{9}-\frac{y-x}{9}\Big)}_{=:\Delta^{(1,2)}_{t}((y-x)/\sigma)},\\
    \Delta_{s,t}^{(2)}(x,y)=&\underbrace{\frac{1}{\sqrt{\sigma_sh_s}}K'\Big(\frac{z_0}{h_{s}}-\frac{y}{\sigma_s h_{s}}\Big)}_{=:\Delta^{(2,1)}_s(y)}\Big(\underbrace{K\Big(\frac{\sigma_t z_0}{9}-\frac{y-x}{9}\Big)-K\Big(\frac{\sigma_s z_0}{9}-\frac{y-x}{9}\Big)}_{=:\Delta^{(2,2)}_{s,t}((y-x)/\sigma)}\Big).
\end{align*}
Proposition~\ref{prop:variance} yields
\begin{align}
  \E[(V_t-V_s)^2]\lesssim&\Big((\frac\sigma n+\sigma^2)\big\|\Delta_t^{(1,2)}\big\|^2_{L^1}+(\frac\sigma n+\frac{1}{n^2})\big\|\Delta_{t}^{(1,2)}\big\|^2_{L^2}\Big)
\big\|\Delta_{s,t}^{(1,1)}\big\|^2_{L^2}\notag\\
  &+\Big((\frac\sigma n+\sigma^2)\big\|\Delta_{s,t}^{(2,2)}\big\|^2_{L^1}+(\frac\sigma n+\frac{1}{n^2})\big\|\Delta_{s,t}^{(2,2)}\big\|^2_{L^2}\Big)
\big\|\Delta_{s}^{(2,1)}\big\|^2_{L^2}\label{eq:DiffV}\\
\lesssim&\Big(\frac1{n\sigma}+1+\frac1 n+\frac{1}{n^2\sigma}\Big)\big\|\Delta_{s,t}^{(1,1)}\big\|^2_{L^2}\notag\\
  &+\Big((\frac\sigma n+\sigma^2)\big\|\Delta_{s,t}^{(2,2)}\big\|^2_{L^1}+(\frac\sigma n+\frac{1}{n^2})\big\|\Delta_{s,t}^{(2,2)}\big\|^2_{L^2}\Big)\nonumber\label{eq:DiffV}
\end{align}
We have to bound the norms in the previous line. We have
\begin{align*}
\|\Delta_{s,t}^{(1,1)}\|_{L^{2}}^{2}& \le3\Big(\frac{1}{\sqrt{\sigma_{t}h_{t}}}-\frac{1}{\sqrt{\sigma_{s}h_{s}}}\Big)^{2}\int K'\Big(\frac{z_{0}}{h_{t}}-\frac{y}{\sigma_{t}h_{t}}\Big)\Big)^{2}\d y\\
 & \qquad+\frac{3}{\sigma_{s}h_{s}}\int\Big(K'\Big(\frac{z_{0}}{h_{t}}-\frac{y}{\sigma_{t}h_{t}}\Big)-K'\Big(\frac{z_{0}}{h_{s}}-\frac{y}{\sigma_{t}h_{s}}\Big)\Big)^{2}\d y\\
 &\qquad+\frac{3}{\sigma_{s}h_{s}}\int\Big(K'\Big(\frac{z_{0}}{h_{s}}-\frac{y}{\sigma_{t}h_{s}}\Big)-K'\Big(\frac{z_{0}}{h_{s}}-\frac{y}{\sigma_{s}h_{s}}\Big)\Big)^{2}\d y\\
 & =:3T_{1}+3T_{2}+3T_3.
\end{align*}
Using $K'\in L^{2}$, we have 
\begin{align*}
T_{1} & \lesssim\Big(1-\frac{\sqrt{\sigma_{t}h_{t}}}{\sqrt{\sigma_{s}h_{s}}}\Big)^{2}=\Big(1-\Big(\frac{\sigma_{t}}{\sigma_{s}}\Big)^{(s+1)/(2s+3)}\Big)^{2}\lesssim\big(\frac{\sigma_{s}-\sigma_{t}}{\sigma_{s}}\big)^{2}\lesssim(t-s)^{2}.
\end{align*}
Moreover, for some intermediate point $r\in[s,t]$ we have
\begin{align*}
  T_2=\frac{1}{h_{s}}\int\Big(K'\Big(\frac{y}{h_{t}}\Big)-K'\Big(\frac{y}{h_{s}}\Big)\Big)^{2}\d y
  &=\frac{1}{h_{s}}\int\Big(\frac y{h_t}-\frac y{h_s}\Big)^2K''\Big(\frac{y}{h_{r}}\Big)^{2}\d y\\
  &\le\sup_{r\in[s,t]}\frac{h_r}{h_{s}}\Big(\frac {h_r}{h_t}-\frac {h_r}{h_s}\Big)^2\int y^2K''(y)^{2}\d y\\
  &\lesssim (t-s)^2,
  \end{align*}
  and
  \begin{align*}
  T_3=\frac{1}{\sigma_s}\int\Big(K'\Big(\frac{z_{0}}{h_{s}}-\frac{y}{\sigma_{t}}\Big)-K'\Big(\frac{z_{0}}{h_{s}}-\frac{y}{\sigma_{s}}\Big)\Big)^{2}\d y
  &=\frac{1}{\sigma_{s}}\int\Big(\frac y{\sigma_t}-\frac y{\sigma_s}\Big)^2K''\Big(\frac{z_0}{h_s}-\frac{y}{\sigma_{r}}\Big)^{2}\d y\\
  &\le\sup_{r\in[s,t]}\frac{\sigma_r}{\sigma_{s}}\Big(\frac {\sigma_r}{\sigma_t}-\frac {\sigma_r}{\sigma_s}\Big)^2\int y^2K''(\frac{z_0}{h_s}-y)^{2}\d y\\
  &\lesssim \frac{(\sigma_t-\sigma_s)^2}{\sigma_s^2h_s^2}\\
  &\lesssim\frac{\epsilon_n^2(t-s)^2}{h_s^2}
  \lesssim(t-s)^2
\end{align*}
because $\epsilon_n^2 h_s^{-2}\to0$.
For $\Delta_{s,t}^{(2,2)}$ we have similarly
\begin{align*}
  \big\|\Delta_{s,t}^{(2,2)}\big\|_{L^1}&=\int\Big|K\Big(\frac{\sigma_t z_0}{9}-\frac{\sigma z}{9}\Big)-K\Big(\frac{\sigma_s z_0}{9}-\frac{\sigma z}{9}\Big)\Big|\d z\\
  &=\frac1{81}\int\big|\sigma_tz_0-\sigma_sz_0\big|\Big|K\Big(\frac{\sigma_r z_0}{9}-\frac{\sigma z}{9}\Big)\Big|\d z
  \lesssim\frac{|\sigma_t-\sigma_s|}{\sigma}\lesssim|t-s|,
  \end{align*}
  and
  \begin{align*}
  \big\|\Delta_{s,t}^{(2,2)}\big\|_{L^2}^2&=\int\Big(K\Big(\frac{\sigma_t z_0}{9}-\frac{\sigma z}{9}\Big)-K\Big(\frac{\sigma_s z_0}{9}-\frac{\sigma z}{9}\Big)\Big)^{2}\d z\\
  &=\frac1{81}\int\big(\sigma_tz_0-\sigma_sz_0\Big)^2K\Big(\frac{\sigma_r z_0}{9}-\frac{\sigma z}{9}\Big)\Big)\Big)^{2}\d z
  \lesssim\frac{(\sigma_t-\sigma_s)^2}{\sigma}\lesssim\sigma(t-s)^2
\end{align*}
It follows that 
$\E\big[\big(V_{t}-V_{s}\big)^{2}\big]\lesssim(t-s)^{2}$
and $(V_{t})$ has an $\alpha$-H\"older regular modification for
any $\alpha\in(0,1/2)$ implying tightness. We have shown (i). 
\medskip

\emph{(ii)} We verify the convergence rate for the direct estimator similiarly to (i).

\emph{Step 1: Reduction.}  We define
\begin{align*}
\bar{f}_{\hat \sigma}^{(2)}(z_0) =\frac{1}{ \hat h_{2} \lambda\mu n}\sum_{i,j} 2K'\big(2(\hat\sigma z_0-Y_{j})\big) K\Big(\frac{z_0}{\hat h_{2}}-\frac{Y_{j}-X_{i}}{\hat \sigma \hat h_{2}}\Big)-\hat \sigma |\mathcal X|.
\end{align*}
Exactly as in Step~1 of the proof of (i), we see that
\[
r_{n}^{-1}\big|\tilde{f}_{\hat \sigma}^{(2)}(z_{0})-f(z_{0})\big|=O_{\P}\Big(r_{n}^{-1}\big|\bar{f}^{(2)}_{\hat \sigma}(z_{0})-f(z_{0})\big|\Big)+o_{\P}(1).
\]
For $\sigma=o(n^{-2/3})$ we have $\P(\Sigma^c)\to0$ for $\Sigma:=\big\{\hat{\sigma}\in[\sigma(1-\epsilon_n),\sigma(1+\epsilon_n)]\big\}$ with
\[
\epsilon_n=\kappa_n\Big(\sqrt{n\sigma^2+n^{-1}} \wedge \frac{1}{\sqrt{n \sigma}}\Big),\qquad \kappa_n=\sqrt{\log n}
\]
due to Theorem~\ref{thm:sigma}. Writing $\bar{h}_2=(n\wedge \bar{\sigma}^{-1})^{-1/(2s+1)}$ for $\bar \sigma\in[\sigma(1-\epsilon_n),\sigma(1+\epsilon_n)]$ we note that 
\begin{align*}
  r_n \bar h_2^{-s}\gtrsim\big((n\wedge \sigma^{-1})^{-s(2s+1)}+\sqrt{n\sigma^2}\big)h_2^{-s}\ge& 1\qquad\text{and}\\
  r_n \big((n\wedge \sigma^{-1})^{-1/2}\bar h_2^{-1/2}+\sqrt{ n\sigma^2}\big)^{-1}\gtrsim&1.
\end{align*}
Hence, as in Step~2 of the proof of (i)
we have on $\Sigma$ 
\begin{align}
 r_{n}^{-1}\big|\bar{f}_{\hat \sigma}^{(2)}(z_{0})-f(z_{0})\big|
 \lesssim& \sup_{\bar{\sigma}:|\bar{\sigma}-\sigma|\le\epsilon_n\sigma}\sqrt{\min(n\bar h_2,{\sigma}^{-1}\bar{h}_2, (n\sigma^2)^{-1})}\Big|\bar f_{\bar\sigma}^{(2)}(z_{0})-\E[\bar f_{\bar\sigma}^{(2)}(z_{0})]\Big|\nonumber\\
 & +\sup_{\bar{\sigma}:|\bar{\sigma}-\sigma|\le\epsilon_n\sigma}\bar{h}_2^{-s}\Big|\big(f(z_{0})-\E[\bar f_{\bar\sigma}^{(1)}(z_{0})]\big)\Big|\label{eq:plugInInteraction}
\end{align}

\emph{Step 2: Bias.} We will use the notation $\bar\psi_1$ and $\bar\psi_2$ from \eqref{eq:psiBar} and the resulting $\bar{\mathcal U}_\sigma(f\ast p)$ and $\bar{\mathcal V}_\sigma(f)$. Note that $\bar h_1=1/(2\bar \sigma)$ in this case. With minor modifications in the proofs of Propositions~\ref{prop:bias1}(ii) and \ref{prop:bias2}(i) we obtain
\[
  \E[\bar{\mathcal U}_\sigma(f\ast p)]=\frac{1}{\bar h_1}\qquad\text{and}\qquad
  \E[\bar{\mathcal V}_\sigma(f)]= \bar h_1^{-1}f(z_0)+O(h_1^{-1}h_2^s).
\]
Therefore, we obtain the following modification of \eqref{eq:biasf2}:
\begin{align*}
\E\Big[\bar{f}^{(2)}_{\bar\sigma}(z_0)\Big] & =\bar h_1\bar{\mathcal V}_\sigma(f)+\bar \sigma n\lambda h_1\,\bar{\mathcal U}_\sigma(f\ast p)-\bar\sigma\E[|\mathcal X|]
=f(z_0)+O\big(h_{2}^{s}\big).
\end{align*}
We conclude
\[
\sup_{\bar{\sigma}:|\bar{\sigma}-\sigma|\le\epsilon_n\sigma}\bar{h}_2^{-s}\Big|\big(f(z_{0})-\E[\bar f_{\bar\sigma}^{(2)}(z_{0})]\big)\Big|\lesssim 1.
\]

\emph{Step 3: Stochastic error term.} First note that due to the bias correction we have the additional stochastic error term: 
\[
  \sup_{\bar{\sigma}:|\bar{\sigma}-\sigma|\le\epsilon_n\sigma}\sqrt{\min(n\bar h_2,{\sigma}^{-1}\bar{h}_2, (n\sigma^2)^{-1})}\Big|\bar \sigma|\mathcal X|-\bar \sigma n\lambda\Big|\le\frac1{\sigma n^{1/2}} 2\sigma \big||\mathcal X|-n\lambda\big|=O_\P(1),
\]
where we used $|\mathcal X|\sim\mathrm{Poiss}(n\lambda)$. To bound the stochastic error due to the terms involving $\bar \psi_1$ and $\bar\psi_2$, we use again the Kolmogorov-Chentsov criterion for the process
\begin{align*}
V_{t} & :=\varpi_t\Big(\bar{f}_{\sigma_{t}}^{(1)}(z_{0})-\E[\bar{f}_{\sigma_{t}}^{(1)}(z_{0})]+\sigma_t(|\mathcal X|-n\lambda)\Big)
\end{align*}
with
$$\varpi_t := \sqrt{\min(n h_t,{\sigma}^{-1}h_t, (n\sigma^2)^{-1})},$$
\[
\sigma_{t}:=\sigma(1-\epsilon_n+2\epsilon_n t),\qquad h_{t}:=(n\wedge \sigma_t^{-1})^{-1/(2s+1)},\qquad t\in[0,1].
\]
We decompose
\begin{align*}
V_{t}-V_{s}=&\frac{1}{\lambda\mu \sqrt n}\sum_{i,j}\big(\Delta^{(1)}_{s,t}(X_i,Y_{j})-\E[\Delta^{(1)}_{s,t}(X_i,Y_{j})]\big)\\
&+\frac{1}{\lambda\mu \sqrt{n}}\sum_{i,j}\big(\Delta^{(2)}_{s,t}(X_i,Y_{j})-\E[\Delta^{(2)}_{s,t}(X_i,Y_{j})]\big)
\end{align*}
with
\begin{align*}
    \Delta_{s,t}^{(1)}(x,y)=&\Big(\underbrace{2K'\big(2(\sigma_t z_0 -y)\big)-2K'\big(2(\sigma_s z_0-y)\big)}_{=:\Delta^{(1,1)}_{s,t}(y)}\Big)\underbrace{\frac{\varpi_t}{\sqrt{n}h_t}K\Big(\frac{z_0}{h_{t}}-\frac{y-x}{\sigma_t h_{t}}\Big)}_{=:\Delta^{(1,2)}_t((y-x)/\sigma)},\\
  \Delta_{s,t}^{(2)}(x,y)=&\underbrace{2K'\big(2(\sigma_s z_0-y)\big)}_{=:\Delta^{(2,1)}_{s}(y)} \Big(\underbrace{\frac{\varpi_t}{\sqrt{n}h_t}K\Big(\frac{z_0}{h_{t}}-\frac{y-x}{\sigma_t h_{t}}\Big)-\frac{\varpi_s}{\sqrt{n}h_s}K\Big(\frac{z_0}{h_{s}}-\frac{y-x}{\sigma_s h_{s}}\Big)}_{=:\Delta_{s,t}^{(2,2)}((y-x)/\sigma)}\Big).
\end{align*}
With these definitions, the bound \eqref{eq:DiffV} remains valid up to a factor $n^2$ (coming from the missing factor $\frac{1}{n}$ in $V_t-V_s$) and we obtain
\begin{align*}
  \E[(V_t-V_s)^2]& \lesssim\varpi_t\Big(\sigma+n\sigma^2+\frac\sigma{h_t}+\frac{1}{n h_t}\Big)
\big\|\Delta_{s,t}^{(1,1)}\big\|^2_{L^2}\\
  &\quad +(\sigma n+\sigma^2n^2)\big\|\Delta_{s,t}^{(2,2)}\big\|^2_{L^1}+(\sigma n+1)\big\|\Delta_{s,t}^{(2,2)}\big\|^2_{L^2}\\
&  \lesssim \big\|\Delta_{s,t}^{(1,1)}\big\|^2_{L^2}+(\sigma n+(\sigma n)^2)\big\|\Delta_{s,t}^{(2,2)}\big\|^2_{L^1}+(\sigma n+1)\big\|\Delta_{s,t}^{(2,2)}\big\|^2_{L^2}.
\end{align*}
Next, we have
\begin{align*}
  \|\Delta_{s,t}^{(1,1)}\|_{L^2}^2&=4\int\Big(K'\big(2(\sigma_t z_0 -y)\big)-2K'\big(2(\sigma_s z_0-y)\big)\Big)^2\d y\\
  &=8(\sigma_t-\sigma_s)^2 z_0\int K''\big(2(\xi -y)\big)^2\d y\lesssim \sigma (t-s)^2\lesssim (t-s)^2.
\end{align*}
Moreover, the term $ \|\Delta_{s,t}^{(2,2)}\|_{L^1} $ is bounded above by
\begin{align*}
& \Big|\frac{\varpi_t}{\sqrt{n}h_t}-\frac{\varpi_s}{\sqrt{n}h_s}\Big|\int \Big|K\Big(\frac{z_0}{h_{t}}-\frac{\sigma z}{\sigma_t h_{t}}\Big)\Big| \d z\\
 &+\frac{\varpi_s}{\sqrt{n}h_s}\int \Big|K\Big(\frac{z_0}{h_{t}}-\frac{\sigma z}{\sigma_th_t}\Big)-K\Big(\frac{z_0}{h_{s}}-\frac{\sigma z}{\sigma_t h_{s}}\Big)\Big| \d z \\
 &+\frac{\varpi_s}{\sqrt{n}h_s}\int \Big|K\Big(\frac{z_0}{h_{s}}-\frac{\sigma z}{\sigma_th_s}\Big)-K\Big(\frac{z_0}{h_{s}}-\frac{\sigma z}{\sigma_s h_{s}}\Big)\Big| \d z\\
 & \le \Big|\frac{\varpi_t}{\sqrt{n}h_t}-\frac{\varpi_s}{\sqrt{n}h_s}\Big|h_t +\frac{\varpi_s}{\sqrt{n}h_s}\sup_{r\in[s,t]}\frac{\sigma_t^2h_r^2}{\sigma^2}\big|\frac{\sigma}{\sigma_th_t}-\frac{\sigma}{\sigma_th_s}\Big| +\frac{\varpi_s}{\sqrt{n}h_s}\sup_{r\in[s,t]}\frac{\sigma_r^2h_s}{\sigma^2}\big|\frac{\sigma}{\sigma_th_s}-\frac{\sigma}{\sigma_sh_s}\Big|\\
 & \lesssim  \Big|\frac{\varpi_t}{\sqrt{n}h_t}-\frac{\varpi_s}{\sqrt{n}h_s}\Big|h_t +\frac{\varpi_s}{\sqrt{n}h_s}\big(h_s|t-s|+\frac{|\sigma_t-\sigma_s|}{\sigma}\big).
\end{align*}
Since $h_s$ and $h_s$ are of the same order in terms of $n$ and $\sigma$ both minima in the above difference are obtained at the same argument. Separate upper bounds in all three cases yield
\begin{align*}
  (\sigma n+\sigma^2n^2)\big\|\Delta_{s,t}^{(2,2)}\big\|^2_{L^1}
  & \lesssim  \sigma nh_t^2\big(h_s^{-1/2}-h_t^{-1/2}\big)^2\eins_{\sigma<n^{-(2s+1)/(2s+2)}}
  + h_t^2\big(h_t^{-1}-h_s^{-1}\big)^2\\
  &+  \Big(\frac{\sigma n}{h_s}\eins_{\sigma<n^{-(2s+1)/(2s+2)}}+\frac{1}{h_s^2}\eins_{\sigma\ge n^{-(2s+1)/(2s+2)}}\Big)\big(h_s^2+\epsilon_n^2\big)|t-s|^2\\
  &\lesssim  |t-s|^2,
\end{align*}
noting that $n\sigma h_t\le 1$ as well as $\frac{\sigma n\epsilon_n^2}{h_s}<\kappa_n^2 n^{(2-2s)/(2s+2)}\le 1$ for $\sigma<n^{-(2s+1)/(2s+2)}$ and $s > 1$ while 
$\frac{\epsilon_n}{h_s}= 
\kappa_n \min\Big(n^{1/2}\sigma^{\frac{2s}{2s+1}}, n^{-1/2}\sigma^{-\frac{2s+3}{4s+2}}\Big) \lesssim \kappa_n n^{(3-2s)/(12s+6)}\lesssim 1$ 
for $\sigma\ge n^{-(2s+1)/(2s+2)}$ and $s > 3/2$.
Similarly, $\|\Delta_{s,t}^{(2,2)}\|_{L^2}^2$ is less than
\begin{align*}
& \Big|\frac{\varpi_t}{\sqrt{n}h_t}-\frac{\varpi_s}{\sqrt{n}h_s}\Big|^2\int \Big|K\Big(\frac{z_0}{h_{t}}-\frac{\sigma z}{\sigma_t h_{t}}\Big)\Big|^2 \d z \\
 & \le \frac{\varpi_s}{nh^2_s}\int \Big|K\Big(\frac{z_0}{h_{t}}-\frac{\sigma z}{\sigma_th_t}\Big)-K\Big(\frac{z_0}{h_{s}}-\frac{\sigma z}{\sigma_t h_{s}}\Big)\Big|^2 \d z\\
 &\qquad +\frac{\varpi_s}{nh^2_s}\int \Big|K\Big(\frac{z_0}{h_{s}}-\frac{\sigma z}{\sigma_th_s}\Big)-K\Big(\frac{z_0}{h_{s}}-\frac{\sigma z}{\sigma_s h_{s}}\Big)\Big|^2 \d z\\
 & \le \Big|\frac{\varpi_t}{\sqrt{n}h_t}-\frac{\varpi_s}{\sqrt{n}h_s}\Big|^2h_t +\frac{\varpi_s^2}{nh^2_s}\sup_{r\in[s,t]}\frac{\sigma_t^3h_r^3}{\sigma^3}\Big|\frac{\sigma}{\sigma_th_t}-\frac{\sigma}{\sigma_th_s}\Big|^2+\frac{\varpi_s}{\sqrt{n}h_s}\sup_{r\in[s,t]}\frac{\sigma_r^3h_s}{\sigma^3}\Big|\frac{\sigma}{\sigma_th_s}-\frac{\sigma}{\sigma_sh_s}\Big|^2\\
 &\lesssim \Big|\frac{\varpi_t}{\sqrt{n}h_t}-\frac{\varpi_s}{\sqrt{n}h_s}\Big|^2h_t +\frac{\varpi_s}{nh^2_s}\Big(h_s|t-s|^2+\frac{|\sigma_t-\sigma_s|^2}{\sigma^2 h_s}\Big)
\end{align*}
and thus we conclude
\begin{align*}
  \big(\sigma n+1\big)\big\|\Delta_{s,t}^{(2,2)}\big\|^2_{L^2}
  & \lesssim  h_t\big(h_s^{-1/2}-h_t^{-1/2}\big)^2+ \frac{h_t}{\sigma n}\big(h_t^{-1}-h_s^{-1}\big)^2\eins_{\sigma\ge n^{-(2s+1)/(2s+2)}}\\
  &+  \big(\sigma n+1\big)\frac{\min(n h_s,{\sigma}^{-1}{h}_s, (n\sigma^2)^{-1})}{nh_s^2}\big(h_s+\frac{\epsilon_n^2}{h_s}\big)|t-s|^2\lesssim |t-s|^2
\end{align*}
by distinguishing the three different cases where the minima can be attained. In particular, we have for the last term: 
\begin{align*}
 \big(\sigma n+1\big)\frac{\min(n h_s,\frac{h_s}{\sigma}, \frac{1}{n\sigma^2})}{nh_s^3}\epsilon_n^2
  \lesssim \begin{cases}
    \frac{\kappa_n^2}{n^2h_s^2}=\kappa_n^2n^{-4s/(2s+1)},\;& \sigma\le n^{-1},\\
    \frac{\kappa_n^2\sigma^2 n}{h_s^2}\le \kappa_n^2n^{-(s-1)/(s+1)},\; & n^{-1}<\sigma<n^{-\frac{2s+1}{2s+2}},\\
    \frac{\kappa_n^2\sigma}{h_s^{3}}=\kappa_n^2\sigma^{(2s-2)/(2s+1)},\;&\text{otherwise},
  \end{cases}
\end{align*}
which is uniformly bounded if $s>1$.
\end{proof}

\appendix
\section{Remaining proofs}

\subsection{Proof of the covariance structure of $(M,N)$}
\begin{proof}[Proof of Lemma~\ref{lem:higherOrder}]
$(i)$ to $(iii)$: For $A_{1},A_{2}\subset[-1,1]$ and $B_{1},B_{2}\subset\R$
with $A_{1}\cap A_{2}=\emptyset$ and $B_{1}\cap B_{2}=\emptyset$
we write in view of Proposition~\ref{prop:expFormula}:
\begin{align*}
\Psi(\eta_{1},\eta_{2},\xi_{1},\xi_{2}):= & \E\Big[e^{\eta_{1}M(A_{1})+\eta_{2}M(A_{2})+\xi_{1}N(B_{1})+\xi_{2}N(B_{2})}\Big]\\
= & \exp\Big(n\lambda(e^{\eta_{1}}-1)|A_{1}|+n\lambda(e^{\eta_{2}}-1)|A_{2}|+n\lambda\int_{0}^{1}\big(e^{\psi_{1}(\xi_{1},x)+\psi_{2}(\xi_{2},x)}-1)\d x\\
 & \qquad\qquad+n\lambda\sum_{i=1}^{2}(e^{\eta_{i}}-1)\int_{A_{i}}(e^{\psi_{1}(\xi_{1},x)+\psi_{2}(\xi_{2},x)}-1)\d x\Big).
\end{align*}
where 
\[
\psi_{j}(\xi,x):=\mu(e^{\xi}-1)\int_{B_{j}}f_{\sigma}(y-x)\d y.
\]
We moreover abbreviate
\begin{gather*}
h(x):=(e^{\psi_{1}(\xi_{1},x)+\psi_{2}(\xi_{2},x)}-1),\qquad h_{j}'(x):=\partial_{\xi_{j}}h(x)=e^{\psi_{1}(\xi_{1},x)+\psi_{2}(\xi_{2},x)}\partial_{\xi_{j}}\psi_{j}(\xi_{j},x).\\
h''(x):=\partial_{\xi_{1}}\partial_{\xi_{2}}h(x)=e^{\psi_{1}(\xi_{1},x)+\psi_{2}(\xi_{2},x)}\partial_{\xi_{1}}\psi_{1}(\xi_{1},x)\partial_{\xi_{2}}\psi_{2}(\xi_{2},x).
\end{gather*}
Then the first order partial derivatives are given by:
\begin{align*}
\partial_{\eta_{1}}\Psi(\eta_{1},\eta_{2},\xi_{1},\xi_{2}) & =\Psi(\eta_{1},\eta_{2},\xi_{1},\xi_{2})\,n\lambda e^{\eta_{1}}\Big(|A_{1}|+\int_{A_{1}}h(x)\d x\Big),\\
\partial_{\xi_{1}}\Psi(\eta_{1},\eta_{2},\xi_{1},\xi_{2}) & =\Psi(\eta_{1},\eta_{2},\xi_{1},\xi_{2})\,n\lambda\Big(\int_{0}^{1}h_{1}'(x)\d x+\sum_{i=1}^{2}(e^{\eta_{i}}-1)\int_{A_{i}}h_{1}'(x)\d x\Big).
\end{align*}
We moreover need the second order derivatives 
\begin{align}
\partial_{\eta_{1}}\partial_{\eta_{2}}\Psi(\eta_{1},\eta_{2},\xi_{1},\xi_{2}) & =\Psi(\eta_{1},\eta_{2},\xi_{1},\xi_{2})n^{2}\lambda^{2}e^{\eta_{1}+\eta_{2}}\Big(|A_{1}|+\int_{A_{1}}h(x)\d x\Big)\Big(|A_{2}|+\int_{A_{2}}h(x)\d x\Big),\notag\\
\partial_{\xi_{1}}\partial_{\xi_{2}}\Psi(\eta_{1},\eta_{2},\xi_{1},\xi_{2}) & =\Psi(\eta_{1},\eta_{2},\xi_{1},\xi_{2})n^{2}\lambda^{2}\Big(\int_{0}^{1}h_{1}'(x)\d x+\sum_{i=1}^{2}(e^{\eta_{i}}-1)\int_{A_{i}}h_{1}'(x)\d x\Big)\notag\\
 & \qquad\times\Big(\int_{0}^{1}h_{2}'(x)\d x+\sum_{i=1}^{2}(e^{\eta_{i}}-1)\int_{A_{i}}h_{2}'(x)\d x\Big)\notag\\
 & \quad+\Psi(\eta_{1},\eta_{2},\xi_{1},\xi_{2})n\lambda\Big(\int_{0}^{1}h''(x)\d x+\sum_{i=1}^{2}(e^{\eta_{i}}-1)\int_{A_{i}}h''(x)\d x\Big).\label{eq:VarBB}
\end{align}
Therefore,
\begin{align}
  \frac{\partial_{\eta_{1}}\partial_{\eta_{2}}\partial_{\xi_{1}}\Psi(\eta_{1},\eta_{2},\xi_{1},\xi_{2})}{\Psi(\eta_{1},\eta_{2},\xi_{1},\xi_{2})}
  =&n^{3}\lambda^{3}e^{\eta_{1}+\eta_{2}}\Big(|A_{1}|+\int_{A_{1}}h(x)\d x\Big)\Big(|A_{2}|+\int_{A_{2}}h(x)\d x\Big)\notag\\
 & \qquad\times\Big(\int_{0}^{1}h_{1}'(x)\d x+\sum_{i=1}^{2}(e^{\eta_{i}}-1)\int_{A_{i}}h_{1}'(x)\d x\Big)\notag\\
 & +n^{2}\lambda^{2}e^{\eta_{1}+\eta_{2}}\Big(\int_{A_{1}}h_{1}'(x)\d x\Big)\Big(|A_{2}|+\int_{A_{2}}h(x)\d x\Big)\notag\\
 & +n^{2}\lambda^{2}e^{\eta_{1}+\eta_{2}}\Big(|A_{1}|+\int_{A_{1}}h(x)\d x\Big)\Big(\int_{A_{2}}h_{1}'(x)\d x\Big)\label{eq:VarAAB}
\end{align}
and
\begin{align}
 & \frac{\partial_{\eta_{1}}\partial_{\xi_{1}}\partial_{\xi_{2}}\Psi(\eta_{1},\eta_{2},\xi_{1},\xi_{2})}{\Psi(\eta_{1},\eta_{2},\xi_{1},\xi_{2})}\notag\\
 & =n^{3}\lambda^{3}e^{\eta_{1}}\Big(|A_{1}|+\int_{A_{1}}h(x)\d x\Big)\Big(\int_{0}^{1}h_{1}'(x)\d x+\sum_{i=1}^{2}(e^{\eta_{i}}-1)\int_{A_{i}}h_{1}'(x)\d x\Big)\notag\\
 & \qquad\times\Big(\int_{0}^{1}h_{2}'(x)\d x+\sum_{i=1}^{2}(e^{\eta_{i}}-1)\int_{A_{i}}h_{2}'(x)\d x\Big)\notag\\
 & \quad+n^{2}\lambda^{2}e^{\eta_{1}}\int_{A_{1}}h_{1}'(x)\d x\,\Big(\int_{0}^{1}h_{2}'(x)\d x+\sum_{i=1}^{2}(e^{\eta_{i}}-1)\int_{A_{i}}h_{2}'(x)\d x\Big)\notag\\
 & \quad+n^{2}\lambda^{2}e^{\eta_{1}}\Big(\int_{0}^{1}h_{1}'(x)\d x+\sum_{i=1}^{2}(e^{\eta_{i}}-1)\int_{A_{i}}h_{1}'(x)\d x\Big)\int_{A_{1}}h_{2}'(x)\d x\notag\\
 & \quad+n^{2}\lambda^{2}e^{\eta_{1}}\Big(|A_{1}|+\int_{A_{1}}h(x)\d x\Big)\Big(\int_{0}^{1}h''(x)\d x+\sum_{i=1}^{2}(e^{\eta_{i}}-1)\int_{A_{i}}h''(x)\d x\Big)\notag\\
 & \quad+n\lambda e^{\eta_{1}}\int_{A_{1}}h''(x)\d x.\label{eq:VarABB}
\end{align}
Evaluating \eqref{eq:VarBB}, \eqref{eq:VarAAB} and \eqref{eq:VarABB} at $\eta_{1}=\eta_{2}=\xi_{1}=\xi_{2}=0$ yields (i), (ii) and (iii), respectively.

$(iv)$ It remains to calculate $\partial_{\eta_{1}}\partial_{\eta_{2}}\partial_{\xi_{1}}\partial_{\xi_{2}}\Psi(\eta_{1},\eta_{2},\xi_{1},\xi_{2})$ which can be deduced straightfroward from the previous formulas:
\begin{align*}
 & \frac{\partial_{\eta_{1}}\partial_{\eta_{2}}\partial_{\xi_{1}}\partial_{\xi_{2}}\Psi(\eta_{1},\eta_{2},\xi_{1},\xi_{2})}{\Psi(\eta_{1},\eta_{2},\xi_{1},\xi_{2})}\\
 & =n^{4}\lambda^{4}e^{\eta_{1}+\eta_{2}}\Big(|A_{1}|+\int_{A_{1}}h(x)\d x\Big)\Big(|A_{2}|+\int_{A_{2}}h(x)\d x\Big)\\
 & \qquad\times\Big(\int_{0}^{1}h_{1}'(x)\d x+\sum_{i=1}^{2}(e^{\eta_{i}}-1)\int_{A_{i}}h_{1}'(x)\d x\Big)\Big(\int_{0}^{1}h_{2}'(x)\d x+\sum_{i=1}^{2}(e^{\eta_{i}}-1)\int_{A_{i}}h_{2}'(x)\d x\Big)\\
 & \quad+n^{3}\lambda^{3}e^{\eta_{1}+\eta_{2}}\Big(|A_{1}|+\int_{A_{1}}h(x)\d x\Big)\int_{A_{2}}h_{1}'(x)\d x\Big(\int_{0}^{1}h_{2}'(x)\d x+\sum_{i=1}^{2}(e^{\eta_{i}}-1)\int_{A_{i}}h_{2}'(x)\d x\Big)\\
 & \quad+n^{3}\lambda^{3}e^{\eta_{1}+\eta_{2}}\Big(|A_{1}|+\int_{A_{1}}h(x)\d x\Big)\Big(\int_{0}^{1}h_{1}'(x)\d x+\sum_{i=1}^{2}(e^{\eta_{i}}-1)\int_{A_{i}}h_{1}'(x)\d x\Big)\int_{A_{2}}h_{2}'(x)\d x\\
 & \quad+n^{3}\lambda^{3}e^{\eta_{1}+\eta_{2}}\Big(|A_{2}|+\int_{A_{2}}h(x)\d x\Big)\int_{A_{1}}h_{1}'(x)\d x\Big(\int_{0}^{1}h_{2}'(x)\d x+\sum_{i=1}^{2}(e^{\eta_{i}}-1)\int_{A_{i}}h_{2}'(x)\d x\Big)\\
 & \quad+n^{2}\lambda^{2}e^{\eta_{1}+\eta_{2}}\int_{A_{1}}h_{1}'(x)\d x\int_{A_{2}}h_{2}'(x)\d x\\
 & \quad+n^{3}\lambda^{3}e^{\eta_{1}+\eta_{2}}\Big(|A_{2}|+\int_{A_{2}}h(x)\d x\Big)\Big(\int_{0}^{1}h_{1}'(x)\d x+\sum_{i=1}^{2}(e^{\eta_{i}}-1)\int_{A_{i}}h_{1}'(x)\d x\Big)\int_{A_{1}}h_{2}'(x)\d x\\
 & \quad+n^{2}\lambda^{2}e^{\eta_{1}+\eta_{2}}\int_{A_{2}}h_{1}'(x)\d x\int_{A_{1}}h_{2}'(x)\d x\\
 & \quad+n^{3}\lambda^{3}e^{\eta_{1}+\eta_{2}}\Big(|A_{1}|+\int_{A_{1}}h(x)\d x\Big)\Big(|A_{2}|+\int_{A_{2}}h(x)\d x\Big)\\
 & \qquad\times\Big(\int_{0}^{1}h''(x)\d x+\sum_{i=1}^{2}(e^{\eta_{i}}-1)\int_{A_{i}}h''(x)\d x\Big)\\
 & \quad+n^{2}\lambda^{2}e^{\eta_{1}+\eta_{2}}\Big(\Big(|A_{1}|+\int_{A_{1}}h(x)\d x\Big)\int_{A_{2}}h''(x)\d x
 +\Big(|A_{2}|+\int_{A_{2}}h(x)\d x\Big)\int_{A_{1}}h''(x)\d x\Big).
\end{align*}
Evaluating this partial derivative at 0 yields
\begin{align*}
 & \E[M(A_{1})M(A_{2})N(B_{1})N(B_{2})]\\
 & =n^{4}\lambda^{4}\mu^{2}|A_{1}||A_{2}|Q_{\sigma}([0,1],B_{1})Q_{\sigma}([0,1],B_{2})\\
 & \quad+n^{3}\lambda^{3}\mu^{2}\big(|A_{1}|Q_{\sigma}(A_{2},B_{1})Q_{\sigma}([0,1],B_{2})+|A_{1}|Q_{\sigma}([0,1],B_{1})Q_{\sigma}(A_{2},B_{1})\big)\\
 & \quad+n^{3}\lambda^{3}\mu^{2}\big(|A_{2}|Q_{\sigma}(A_{1},B_{1})Q_{\sigma}([0,1],B_{2})+|A_{2}|Q_{\sigma}([0,1],B_{1})Q_{\sigma}(A_{1},B_{2})\big)\\
 & \quad+n^{3}\lambda^{3}\mu^{2}|A_{1}||A_{2}|Q_{\sigma}^{2}([0,1])\\
 & \quad+n^{2}\lambda^{2}\mu^{2}\big(Q_{\sigma}(A_{1},B_{1})Q_{\sigma}(A_{2},B_{2})+Q_{\sigma}(A_{2},B_{1})Q_{\sigma}(A_{1},B_{2})\big)\\
 & \quad+n^{2}\lambda^{2}\mu^{2}\big(|A_{1}|Q_{\sigma}^{2}(A_{2})+|A_{2}|Q_{\sigma}^{2}(A_{1})\big).
\end{align*}
Combining this formula with Corollary~\ref{cor: intensity} yields
the assertion.
\end{proof}

\section*{Acknowledgment}
We thank our colleagues Marie Doumic and Alexander Goldenshluger for helpful discussions. The analysis and comments of three referees that convinced us to extend the results of a former version to the case of an unknown scale parameter are gratefully acknowledged. M.T. has been financially supported by DFG via the Heisenberg grant TR 1349/4-1.

\bibliographystyle{apalike}
\bibliography{ref}

\end{document}